\theoremstyle{plain}
\newtheorem{thm}{Theorem}
\newtheorem*{thm*}{Theorem}
\newtheorem{lem}[thm]{Lemma}
\newtheorem{prop}[thm]{Proposition}
\newtheorem{cor}[thm]{Corollary}
\newtheorem{df-prop}[thm]{Definition-Proposition}
\theoremstyle{definition}
\theoremstyle{remark}
\newtheorem{rem}[thm]{Remark}
\newcommand{\Hom}{\operatorname{Hom}}
\newcommand{\mc}{\mathcal}
\newcommand{\mf}{\mathfrak}
\newcommand{\C}{\mathbb C}
\newcommand{\oa}{{\bar 0}}
\newcommand{\ob}{{\bar 1}}
\def\gl{\mathfrak{gl}}
\newcommand{\g}{\mathfrak{g}}
\def\Ann{{\text{Ann}}}
\def\la{\lambda}
\def\ov{\overline}
\newcommand{\ch}{\mathrm{ch}}
\newcommand{\h}{\mathfrak{h}}
\newcommand{\Z}{{\mathbb Z}}
\newcommand{\redtext}[1]{\textcolor{red}{#1}}
\newcommand{\red}[1]{\redtext{ #1}}
\def\mod{\operatorname{-mod}\nolimits}
\def\Mod{\operatorname{-Mod}\nolimits}
\def\Hom{\operatorname{Hom}\nolimits}
\def\End{\operatorname{End}\nolimits}
\def\Ind{\operatorname{Ind}\nolimits}
\def\wt{\operatorname{wt}\nolimits}
\def\pr{\operatorname{pr}\nolimits}
\def\gl{\mathfrak{gl}}
\def\la{\lambda}
\def\ov{\overline}
\newcommand{\ad}{\mathrm{ad}}
\begin{document}
\title[Whittaker modules and finite SUSY $W$-algebras]{Whittaker Modules of Central Extensions of Takiff Superalgebras and Finite Supersymmetric $W$-algebras}

\author[Chen]{Chih-Whi Chen} \address{Department of Mathematics, National Central University, Chung-Li, Taiwan 32054 \\ National Center of Theoretical Sciences,
	Taipei, Taiwan 10617} \email{cwchen@math.ncu.edu.tw}

\author[Cheng]{Shun-Jen Cheng}
\address{Institute of Mathematics, Academia Sinica, Taipei, Taiwan 10617} \email{chengsj@math.sinica.edu.tw}

\author[Suh]{Uhi Rinn Suh} \address{Department of Mathematical Sciences and Research institute of Mathematics, Seoul National University,
	Gwanak-ro 1, Gwanak-gu, Seoul 08826, Korea} \email{uhrisu1@snu.ac.kr}
\date{}

\begin{abstract}
For a basic classical Lie superalgebra $\mf s$, let $\g$ be the central extension of the Takiff superalgebra $\mf s\otimes\Lambda(\theta)$, where $\theta$ is an odd indeterminate. We study the category of $\g$-Whittaker modules associated with a nilcharacter $\chi$ of $\g$ and show that it is equivalent to the category of $\mf s$-Whittaker modules associated with a nilcharacter of $\mf s$ determined by $\chi$. 
In the case when $\chi$ is regular, we obtain, as an application, an equivalence between the categories of modules over the supersymmetric finite $W$-algebras associated to the odd principal nilpotent element at non-critical levels and the category of the modules over the principal finite $W$-superalgebra associated to $\mf s$. Here, a supersymmetric finite $W$-algebra is conjecturally the Zhu algebra of a supersymmetric affine $W$-algebra.  This allows us to classify and construct irreducible representations of a principal finite supersymmetric $W$-algebra.
 \end{abstract}

\maketitle

\tableofcontents



\section{Introduction}

\subsection{Motivation}
Let $\mf s$ be a finite-dimensional Lie algebra and $\mf n$ be a nilradical of $\mf s$. Let $\chi:\mf n\rightarrow\C$ be a character and set $\mf n_\chi:=\{x-\chi(x)\mid x\in\mf n\}$. A finitely generated $U(\mf s)$-module on which $\mf n_\chi$ acts locally nilpotently, is called a {\em Whittaker module}. These modules have first been studied in the case when $\chi$ is regular, i.e., $\chi$ is nonzero on all simple root vectors, by Kostant \cite{Ko78} who showed that this category is equivalent to the category of modules over $Z(\mf s)$. In the case of arbitrary $\chi$, the corresponding  Whittaker modules have been systematically developed by McDowell \cite{Mc85} and Milicic-Soergel \cite{MS97}.  
Since then, considerable progress has been made in various contexts; see, e.g., \cite{B97, BM11, We11, MS14, AB21}  and references therein. Besides their own  interest and applications, Whittaker modules are an important ingredient in the study of so-called finite $W$-algebras. These associative algebras, defined for every nilpotent element $E\in\mf s$, have remarkable structure and representation theory. Due to their highly non-linear nature a direct approach to study them is very difficult. As it turns out, Whittaker modules provide a viable way to study the representation theory of $W$-algebras \cite{Skr, BGK08, Los12}.
 
While Whittaker modules for Lie algebras have attracted a lot of attention over the years, their analogues for Lie superalgebras were studied in detail only quite recently, see, e.g., \cite{BCW14, C21, CCM23, CC23_2, CC24, CW24}. In loc.~cit., various aspects of the category  of Whittaker modules over quasi-reductive Lie superalgebras have been investigated, including the connections with representation theory of principal finite $W$-superalgebras. 

In this paper we consider the following setup: Let $\mf s$ be a basic classical Lie superalgebra  with an even non-degenerate supersymmetric invariant bilinear  form $(\_|\_)$ and let $\Lambda(\theta)$ be the Grassmann superalgebra in one indeterminate $\theta$. We form the Takiff Lie superalgebra $\mf s\otimes \Lambda(\theta)$, and then consider its central extension $(\g, [\_,\_])$ by a one-dimensional center $\C z$ with Lie bracket: 
\begin{align}
&[s_1\otimes f_1,s_2\otimes f_2]:={(-1)^{p(f_1)p(s_2)}}[s_1,s_2]\otimes f_1 f_2+ (-1)^{ p(s_2)} (s_1|s_2)\frac{\partial f_1}{\partial\theta}\frac{\partial f_2}{\partial\theta}z, \label{eq::defofg:1}
\end{align} 
where $s_1,s_2\in \mf s$ are homogeneous elements and $f_1,f_2\in\Lambda(\theta)$. Here $p(\cdot)$ denotes the parity function.

For a nilradical $\mf n$ of $\mf s$, let $\hat{\mf n}:=\mf n\otimes\Lambda(\theta)$ and $\chi:\hat{\mf n}\rightarrow\C$ be a character. The main purpose of this paper is to study the Whittaker modules of $\g$, i.e., the category of $\g$-modules on which $\hat{\mf n}_\chi$ acts locally nilpotently. Note that, in the most degenerate case of $\chi=0$, this category of Whittaker modules reduces to the so-called ``thick'' category $\mc O$, and thus, the investigation in the present paper generalizes and includes some of the results in \cite{Ch93}, where, among others, finite-dimensional $\g$-modules were studied when $\mf s$ is a simple Lie algebra.

A motivation for studying the representation theory of $\g$ stems from the so-called supersymmetric (SUSY) $W$-algebras introduced in \cite{MR94} as an underlying vertex algebra of a certain SUSY conformal field theory. A special feature of a SUSY W-algebra is the existence of a special odd automorphism $D$ that assigns a super-partner to each element. A vertex algebra with such an odd automorphism $D$ is called a SUSY vertex algebra \cite{HK07} and SUSY W-algebra is a SUSY vertex algebra. 

In contrast to usual (affine) $W$-algebra, the data of a SUSY W-algebra are induced from a basic classical Lie superalgebra $\mf s$ and an odd nilpotent element $e$. Roughly speaking, the SUSY W-algebra is a SUSY BRST Hamiltonian reduction of the SUSY affine vertex algebra which is the chiralization of the universal enveloping algebra of $\g$ introduced above. Here, the indeterminate $\theta$ arises from the odd automorphism $D$ of the SUSY affine vertex algebra.
 The best understood case is when $e$ can be included in an $\mathfrak{osp}(1|2)$ subalgebra of $\mf s$ \cite{MRS21,Suh20,CS21} and 
$e$ is principal nilpotent. In this case, it is shown in \cite{GSS25} that the SUSY W-algebra is isomorphic to the usual $W$-algebra associated to the even principal nilpotent $E=\frac{1}{2}[e,e]$.

Recall that the finite $W$-algebra can be realized as the Zhu algebra of the corresponding $W$-algebra associated to an even nilpotent element of a simple Lie superalgebra \cite{DSK06, Gen24}.  This connection provides one of main physical motivations to study finite $W$-algebras, since there is 1-1 correspondence between positive energy irreducible modules of a vertex algebra and irreducible modules of its Zhu algebra \cite{Zhu96,FZ92}.

By the same reason, the Zhu algebra of a SUSY $W$-algebra is an interesting object to understand. Indeed, SUSY W-algebras have superconformal vectors \cite{Song24,SY23} which allow us to define their Zhu algebras. It is expected that the Zhu algebra of the SUSY $W$-algebra associated to the $\mf s$ and $e$ at level $c$ is the following:
\begin{equation} \label{eq:finie SUSY}
    \mc{SW}_c(\mf s,e):=(U^c(\g)\otimes _{U(\hat{\mathfrak{n}})}\mathbb{C}_{\chi^e})^{\text{ad}\hat{\mathfrak{n}}}.
\end{equation}
Here, $U^c(\g)$ is the quotient algebra of $U(\g)$ by the ideal generated by $z-c$ and $\C_{\chi^e}$ is one-dimensional $\hat{\mf n}$-module with character of $\hat{\mathfrak{n}}$ defined by $\chi^e(a)=(e|\frac{\partial}{\partial\theta} a)$ for $a\in \hat{\mathfrak{n}}$, where we identify the derivation $\frac{\partial}{\partial\theta}$ of $\Lambda(\theta)$ with a derivation of $\g$.

The motivation to understand the representation theory of $\mc{SW}_c(\mf s,e)$ in turn leads us to study Whittaker modules over $\g$ which we explain below.

\subsection{Description of results} \subsubsection{}  \label{sect::111}
From now on, let $\mf s$ be a basic classical Lie superalgebra  with an even non-degenerate supersymmetric invariant bilinear  form $(\_|\_)$ and let $\g$ be defined as in \eqref{eq::defofg:1}.

We may  identify $\mf s$ with the subalgebra $\mf s\otimes 1 \subset \mf g$.  Fix a triangular decomposition of $\mf s$ in the sense of \cite{Ma14, CCC21}:
\begin{align}
&\mf s=\mf n^- \oplus \h \oplus \mf n,\label{eq::tri0}
\end{align} with Cartan subalgebra $\h$, nilradical $\mf n$, and opposite radical $\mf n^-$.  We note that the form $(\_|\_)$ on $\h$ induces a non-degenerate bilinear form on $\h^\ast$.

We set $\Delta_\ob$ to be the set of all simple odd roots with respect to the triangular decomposition \eqref{eq::tri0} and $\mf s^{\alpha}$ to be the root space of $\mf s$ corresponding to $\alpha\in \Delta_\ob$.
Recall $\hat{\mf n}=\mf n\otimes \Lambda(\theta)$ and let $c$ be a non-zero complex number. For a given  character $\chi: \hat{\mf n}_\oa \rightarrow \C$, we associate a character  $\zeta_\chi: \mf n_\oa\rightarrow \C$  by declaring that, for any root vector $X$ in $\mf n_\oa$, the value $\zeta_\chi(X)$ is given by:
\begin{align*}  &\zeta_\chi(X):=  \chi(X) + \left\{\begin{array}{ll} \frac{-(\alpha_1|\alpha_2)}{c} \chi(X_1\otimes \theta)\chi(X_2\otimes \theta), \\ \qquad \text{if $X=[X_1,X_2]$ for some $X_i\in \mf s^{\alpha_i}$ with $\alpha_i \in \Delta_\ob$};\\ 	0,  ~~~ \text{otherwise}. 
	\end{array} \right.   \end{align*} Let $\g$-Wmod$^\chi_c$  be the category   of   finitely-generated $\g$-modules $M$ such that $(z-c)M=0$  and $x-\chi(x)$ acts locally nilpotently on $M$, for all $x\in\hat{\mf n}_\oa.$  We will refer to objects in  $\g$-Wmod$^\chi_c$ as {\em Whittaker modules associated with $\chi$ at level $c$}. In addition,  we define the Whittaker category $\mf s$-Wmod$^{\zeta_\chi}$ as the category of finitely-generated $\mf s$-modules on which  $x-\zeta_{\chi}(x)$ acts locally nilpotently, for each $x\in \mf n_\oa$. We  define Whittaker modules over  the subalgebra $\mf c:= \mf s\otimes \theta+\C z$ in a similar fashion. If we set $\eta:=\chi|_{\mf n_\ob\otimes \theta}$, then it turns out that there is a unique simple Whittaker $\mf c$-module $\mf F_c^\eta$   associated with $\eta$ and $c$, which lifts to a simple Whittaker module over $\g$. Since an $\mf s$-module $M$ may be regarded as a $\g$-module by extending the action trivially, we have that $M\otimes \mf F_c^\eta$ is a $\g$-module. The construction of such a functor is inspired by \cite{KT85,Ch93}.  Our first main result is an equivalence of categories between $\mf s\text{-}\text{Wmod}^{\zeta_\chi}$ and $\g$-\text{Wmod}$^{\chi}_c$:   
 
\begin{thm} \label{thm::1} For an arbitrary character $\chi: \hat{\mf n}_\oa \rightarrow \C$, the  tensor functor 
	$$\_\otimes \mf F_c^\eta:  \mf s\text{-}\emph{Wmod}^{\zeta_\chi}\rightarrow \g\emph{-Wmod}^{\chi}_c,~~ M\mapsto M\otimes \mf F_c^\eta,$$ is an equivalence  of categories. In particular, if $\chi$ vanishes on $\mf n_\oa$, then the categories $\g\emph{-Wmod}^{\chi}_c$ and $\g\emph{-Wmod}^{\chi}_{c'}$ are equivalent, for all $c,c'\not=0$.
\end{thm}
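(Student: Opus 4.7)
The plan is to construct an explicit quasi-inverse to $\_\otimes \mf F_c^\eta$ based on the Heisenberg--Clifford structure of $\mf c = \mf s\otimes\theta+\C z$, of which $\mf F_c^\eta$ is the unique simple Whittaker module at level $c$ with parameter $\eta$.

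First I would verify that $\_\otimes \mf F_c^\eta$ lands in $\g\text{-Wmod}^\chi_c$. The central element $z$ clearly acts as $c$. For $x\in\hat{\mf n}_\oa$: if $x\in \mf n_\ob\otimes\theta$, then $x$ acts as zero on $M$ under the trivial extension, so $(x-\chi(x))(m\otimes v)=m\otimes(x-\eta(x))v$ is locally nilpotent by the Whittaker property of $\mf F_c^\eta$. If $x=Y\otimes 1$ with $Y\in\mf n_\oa$, a direct computation yields
\begin{equation*}
(Y\otimes 1-\chi(Y))(m\otimes v) = \bigl((Y-\zeta_\chi(Y))m\bigr)\otimes v + m\otimes \bigl((Y\otimes 1)+(\zeta_\chi(Y)-\chi(Y))\bigr)v,
\end{equation*}
so local nilpotence on $M\otimes\mf F_c^\eta$ reduces to showing that $Y\otimes 1$ acts on $\mf F_c^\eta$ as the scalar $\chi(Y)-\zeta_\chi(Y)$ modulo locally nilpotent operators. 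When $Y=[X_1,X_2]$ with $X_i\in\mf s^{\alpha_i}$ and $\alpha_i\in\Delta_\ob$, this follows from the super Clifford--Weyl structure of $\mf c$: the bracket $[X_1\otimes\theta,X_2\otimes\theta]=-(X_1|X_2)z$ from \eqref{eq::defofg:1} together with $z=c$ identifies the quadratic $(X_1\otimes\theta)(X_2\otimes\theta)$, which acts as $\eta(X_1\otimes\theta)\eta(X_2\otimes\theta)$ modulo locally nilpotent error on $\mf F_c^\eta$, producing precisely the $-(\alpha_1|\alpha_2)/c$ coefficient in $\zeta_\chi$.

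Next I would define the inverse functor $\Psi(N):=\Hom_{U(\mf c)}(\mf F_c^\eta, N)$. By uniqueness of $\mf F_c^\eta$ as a simple $\mf c$-Whittaker module at level $c$ with parameter $\eta$, any $N\in\g\text{-Wmod}^\chi_c$ is isotypic over $\mf c$, giving a canonical $\mf c$-isomorphism $N\cong \Psi(N)\otimes\mf F_c^\eta$. The multiplicity space $\Psi(N)$ inherits an $\mf s$-module structure from $\mf s\otimes 1\subset\g$, and the same Clifford--Weyl computation (read in reverse) shows $(Y-\zeta_\chi(Y))$ is locally nilpotent on $\Psi(N)$, so $\Psi(N)\in\mf s\text{-Wmod}^{\zeta_\chi}$. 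The unit $\Psi\circ\Phi\simeq\id$ then follows from $\Hom_{\mf c}(\mf F_c^\eta, M\otimes \mf F_c^\eta)\cong M$, and the counit $\Phi\circ\Psi\simeq\id$ from the isotypic decomposition.

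The main obstacle is the super Clifford--Weyl calculation identifying the action of $Y\otimes 1$ on $\mf F_c^\eta$ with the scalar $\chi(Y)-\zeta_\chi(Y)$ modulo locally nilpotent error; this requires careful tracking of the supersigns in \eqref{eq::defofg:1} and of how the central charge $c$ enters as a denominator in the twist. For the final assertion, when $\chi|_{\mf n_\oa}=0$ the twists $\zeta_\chi^{(c)}$ and $\zeta_\chi^{(c')}$ are proportional and supported on the same set of roots in $\mf n_\oa$, so $\mf s\text{-Wmod}^{\zeta_\chi^{(c)}}\simeq \mf s\text{-Wmod}^{\zeta_\chi^{(c')}}$ via a standard twisting by an element of $\h$; combined with the main equivalence, this yields $\g\text{-Wmod}^\chi_c\simeq\g\text{-Wmod}^\chi_{c'}$.
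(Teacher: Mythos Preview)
Your overall strategy coincides with the paper's: the quasi-inverse is indeed $\Hom_{\mf c}(\mf F_c^\eta,\_)$, and the two key ingredients are (i) that $\mf F_c^\eta$ is itself a $\g$-Whittaker module with character $\hat\eta=\chi-\zeta_\chi$ on $\mf n_\oa$, and (ii) that every object of $\g\text{-Wmod}^\chi_c$ is $\mf c$-isotypic of type $\mf F_c^\eta$. However, the step you flag as the main obstacle has a genuine gap. You never explain how the $\mf c$-module $\mf F_c^\eta$ becomes a $\g$-module; without this the term $(Y\otimes 1)v$ in your displayed identity has no meaning, and the tensor product $M\otimes\mf F_c^\eta$ is not yet a $\g$-module. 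The paper supplies this lift by an explicit Sugawara-type formula (Lemma~\ref{lem::6::ext}): $\phi_\eta(s)=\frac{1}{2c}\sum_j\phi_\eta(\ov{[s,u^j]})\phi_\eta(\ov{u_j})$ for $s\in\mf s$, and verifying that this is a homomorphism of Lie superalgebras is a substantial computation.

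More seriously, your proposed mechanism for the coefficient $-(\alpha_1|\alpha_2)/c$ cannot work. For $X_i\in\mf s^{\alpha_i}$ with $\alpha_i\in\Delta_\ob$ both positive, invariance of $(\_|\_)$ forces $(X_1|X_2)=0$, so $[X_1\otimes\theta,X_2\otimes\theta]=0$ in $\g$ and the quadratic $(X_1\otimes\theta)(X_2\otimes\theta)$ carries no information about $(\alpha_1|\alpha_2)$. In the paper's actual computation (Lemma~\ref{prop::10}) this coefficient arises from structure constants of $\mf s$ through the Sugawara formula: for $Y=[E_{\alpha_1},E_{\alpha_2}]$ the relevant terms are $\ov{[Y,F_{\alpha_i}]}\,\ov{E_{\alpha_i}}$, and it is the identity $[Y,F_{\alpha_1}]=-(\alpha_1|\alpha_2)E_{\alpha_2}$ that produces the factor after evaluating at the vacuum under the $\eta$-twist. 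Two smaller points: isotypicality over $\mf c$ needs not just uniqueness of the simple constituent but that $\mf F_c^\eta$ has no nontrivial self-extensions as a $\mf c$-module, and you should check that $\Psi(N)$ is finitely generated; the paper addresses both in Proposition~\ref{prop::13}.
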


\subsubsection{} \label{sect::11:2}

Let $e$ be an odd  nilpotent element inside a subalgebra $\langle F,f,h,e,E \rangle \subseteq  \mf s$ isomorphic to $\mf{osp}(1|2)$ such that $\langle E,h, F\rangle$ consists of an $\mf{sl}(2)$-triple, where $[e,e]=2E$ and $[f,f]=-2F$.   By the $\mf{sl}(2)$  representation theory, we have a $\Z$-grading
$$\g=\bigoplus_{i\in \Z} \g(i),~\text{ where $\g(i) = \{x\in \g|~\ad h(x)=ix\}$}. $$
We define the $\Z$-graded subalgebra   $\mf m:= \bigoplus_{i\leq -1}\mf g(i).$ 
Let $\chi^e \in \mf m^\ast$  be the unique linear map  determined by:
\begin{align*} 
\chi^e(x)  = (e|\frac{\partial x}{\partial\theta}),\quad x\in\mf m. \end{align*} 
Then $\chi^e(\_): \mf m\rightarrow\C$ defines a character of $\mf m$. Let $Q_{\chi^e}$ denote the  generalized Gelfand-Graev module, i.e., $Q_{\chi^e}:=U(\mf g)\otimes_{U(\mf m)} \C_{\chi^e}$. Set $Q^c_{\chi^e}=Q_{\chi^e}/(z-c)Q_{\chi^e}$ and define the {\em finite supersymmetric (SUSY) $W$-algebra of $\mf s$ associated to $e$ at level $c$}: 
\begin{align*}
\mc{SW}_c(\mf s,e):= \End_{U(\g)}(Q^c_{\chi^e})^{\text{op}}.
\end{align*}
Since $\End_{U(\g)}(Q^c_{\chi^e})^{\text{op}}\cong(Q^c_{\chi^e})^{\ad{\mf m}}$, we have $(U^c(\mf g)\otimes_{U(\mf m)} \C_{\chi^e})^{\ad{\mf m}}\cong\End_{U(\g)}(Q^c_{\chi^e})^{\text{op}}$, where we recall that $U^c(\g):=U(\g)/U(\g)(z-c)$.
Such a formulation of $\mc{SW}_c(\mf s,e)$  is reminiscent of the original definition of finite W-algebras for semisimple Lie algebras introduced by Premet in \cite{Pr02}.

Let $\mc{SW}_c(\mf s,e)\mod$ be  the category of finitely-generated $\mc{SW}_c(\mf s,e)$-modules. We establish in Theorem \ref{thm1} and Appendix \ref{sect::append} a crucial Skryabin type equivalence between $\mc{SW}_c(\mf s,e)\mod$ and the category of finitely generated $U^c(\g)$-modules on which $\mf m_{\chi^e}$ acts locally nilpotently.

Suppose now that $e$ is principal nilpotent, i.e., $E:=\frac{1}{2}[e,e]$ is (even) principal nilpotent in $\mf s_\oa$. In this case, there exists a triangular decomposition as in \eqref{eq::tri0} with nilradical $\mf n$ such that $\mf m=\mf n\otimes \Lambda(\theta)$.
Recall the notations $\chi:=\chi^e{|_{\mf m_\oa}}$ and $\zeta_\chi$ introduced in Subsection \ref{sect::111}. Our second main result is an equivalence of categories $\mf s\text{-}\text{Wmod}^{\zeta_\chi}$ and $\mc{SW}_c(\mf s,e)\mod$. Before stating it,  we note that $\g\text{-Wmod}^{\chi}_c$ can be identified as the category of finitely generated $U^c(\g)$-modules on which $x-\chi(x)$ acts locally nilpotently, for all $x\in \hat{\mf n}_\oa { (=\mf m_\oa)}$.  Let us introduce   
 the associated {\em Whittaker functor} 
\begin{align*}
	&\text{Wh}^c_{\chi{^e}}(\_): \g\text{-Wmod}^{\chi}_c \rightarrow \mc{SW}_c(\mf s,e)\mod, \\ 
&M\mapsto \text{Wh}^c_{\chi{^e}}(M)	:=\{v\in M|~xv=\chi{^e}(x)v,~\text{for all }x\in \mf m\}.
\end{align*}Here $\text{Wh}^c_{\chi{^e}}(M)$ 	is naturally an $\mc{SW}_c(\mf s,e)$-module by letting $\ov{y}.v=yv$, for $v\in \text{Wh}^c_{\chi{^e}}(M)$ and $\ov{y}\in Q^c_{\chi^e}$, where $\ov{y}$ denotes the the coset associated to $y\in U(\g)$. Now, the Whittaker functor is the SUSY version of Skryabin equivalence of categories. Combining this with Theorem \ref{thm::1} we arrive at our second main result: 
\begin{thm}\label{thm::2}
 We have an equivalence of categories 
\begin{align*}
	&\emph{Wh}^c_{\chi{^e}} \circ  (\_\otimes \mf F_c^\eta): \mf s\text{-}\emph{Wmod}^{\zeta_\chi}\rightarrow \mc{SW}_c(\mf s,e)\mod. 
\end{align*}
\end{thm}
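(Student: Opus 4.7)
The plan is to exhibit the desired equivalence as a composition of two already-known equivalences: the tensor equivalence
$$\_\otimes \mf F_c^\eta: \mf s\text{-Wmod}^{\zeta_\chi}\xrightarrow{\sim}\g\text{-Wmod}^{\chi}_c$$
of Theorem \ref{thm::1}, and the Skryabin-type equivalence
$$\text{Wh}^c_{\chi^e}: \{M\in U^c(\g)\text{-mod}\mid\mf m_{\chi^e}\text{ acts locally nilpotently on }M\}\xrightarrow{\sim}\mc{SW}_c(\mf s,e)\mod$$
established in Theorem \ref{thm1} (and Appendix \ref{sect::append}).

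The remaining ingredient, specific to the principal case at hand, is the identification of the target of the first equivalence with the source of the second. Since $e$ is principal we have $\mf m=\mf n\otimes\Lambda(\theta)=\hat{\mf n}$ and $\chi=\chi^e|_{\mf m_\oa}$, so by definition an object of $\g\text{-Wmod}^\chi_c$ is a finitely generated $U^c(\g)$-module on which $x-\chi^e(x)$ acts locally nilpotently for every $x\in\mf m_\oa$. One direction of the identification is immediate. For the other, I would verify that local nilpotence of the $\mf m_\oa$-action via $\chi$ forces local nilpotence of the action of every $y\in\mf m_\ob$. Indeed, for $y\in\mf m_\ob$ one has $\chi^e(y)=(e|\partial y/\partial\theta)=0$, because $\partial y/\partial\theta\in\mf s_\oa$ and the invariant form pairs elements of opposite parities trivially. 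Moreover $y^2=\tfrac{1}{2}[y,y]$ lies in $U(\mf m_\oa)$, and since $\chi^e$ is a Lie superalgebra character, $\chi([y,y])=\chi^e([y,y])=0$. Consequently $[y,y]\in\mf m_\oa$ acts locally nilpotently on $M$, whence so does $y^2$ and therefore $y$ itself.

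Given this identification, the composite $\text{Wh}^c_{\chi^e}\circ(\_\otimes\mf F_c^\eta)$ is then an equivalence of categories between $\mf s\text{-Wmod}^{\zeta_\chi}$ and $\mc{SW}_c(\mf s,e)\mod$. The main conceptual step is the identification of Whittaker categories above, which hinges only on the character property of $\chi^e$ and the super-PBW relation $y^2=\tfrac{1}{2}[y,y]$; modulo this, the conclusion is a formal consequence of Theorem \ref{thm::1} and the Skryabin-type Theorem \ref{thm1}. I expect the hardest bookkeeping to be confirming that the Skryabin equivalence applies at our specific $\chi^e$ and level $c$, which is handled by the separate Theorem \ref{thm1} and Appendix \ref{sect::append}.
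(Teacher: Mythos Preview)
Your proposal is correct and follows essentially the same route as the paper: both argue that in the principal case $\mf m=\hat{\mf n}$, identify $\g\text{-Wmod}^{\chi}_c$ with the finitely-generated $U^c(\g)$-modules on which $\mf m_{\chi^e}$ acts locally nilpotently via the relation $y^2=\tfrac{1}{2}[y,y]$ and the vanishing $\chi^e([y,y])=0$, and then compose Theorem~\ref{thm::1} with the Skryabin-type equivalence of Theorem~\ref{thm1}. Your additional explicit check that $\chi^e(\mf m_\ob)=0$ is a minor elaboration of what the paper leaves implicit in the phrase ``since $\chi^e$ is a character of $\mf m$''.
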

In particular, \mbox{Theorem \ref{thm::2}} provides a complete classification of irreducible representations of $\mc{SW}_c(\mf s,e)$. Furthermore, combining Theorem \ref{thm::2} with Skryabin equivalence for $W$-superalgebras associated with even nilpotent elements, see, e.g.~\cite[Theorem 4.1]{SX20}, gives an equivalence of the categories between $\mc{SW}_c(\mf s,e)\mod$ and the category of finitely-generated modules over the principal finite $W$-superalgebra $U(\mf s,E)$, associated with the even principal nilpotent element $E$; see also Corollary \ref{Cor::23}.

 In light of \cite{Los12, SX20, CW24}, we believe that Theorem \ref{thm::1} could play a crucial role in the understanding of $\mc{SW}_c(\mf s,e)$-modules in category $\mc O$ as in \cite{BGK08} when $e$ is of standard Levi type, and hence of the representation theory of SUSY $W$-algebras.

\subsubsection{} This paper is organized as follows. Section \ref{sect::2} is devoted to setting up the preliminaries. In Section \ref{sect::repcat}, we focus on the study of representation categories of $\g$. We discuss in  Subsection \ref{sect::catO} the category $\mc O$ and  the  Fock spaces associated to the non-critical levels $c$, which are used in the sequel. In Subsection \ref{sect::WhMod}, we develop the theory of $\g$-Whittaker modules at non-critial levels, including several essential ingredients for our main results. We describe the twisted Fock space $\mf F_c^\eta$ and investigate the  tensor functor $-\otimes\mf F_c^\eta$.  The proof of Theorem \ref{thm::1} is established in Subsection \ref{sect::223}.

  In Section \ref{sect::4}, we  study the finite SUSY $W$-algebras. We provide in Theorem \ref{thm1} the Skryabin type  equivalence mentioned above.   Combining our results, we obtain in Subsection \ref{sect::43} a proof of  the equivalence  stated in Theorem \ref{thm::2}.  Finally, Appendix \ref{sect::append} is devoted to a  proof of Theorem \ref{thm1}. 

\vskip 0.3cm

{\bf Acknowledgment}. The first two authors are partially supported by National Science and Technology Council grants of the R.O.C., and further acknowledge support from the National Center for Theoretical Sciences. The third author is supported by NRF Grant, \#2022R1C1C1008698 and Seoul National University, Creative-Pioneering Researchers Program.
\vskip 0.3cm
\section{Preliminary} \label{sect::2}
Throughout the paper,  the symbols $\Z$, $\Z_{\geq 0}$, and $\Z_{\leq 0}$ stand for the sets of all, nonnegative and non-positive integers, respectively. All vector spaces and algebras  are assumed to be over the field $\C$ of complex numbers.  Set $\Z_2 = \{\oa, \ob\}$ to be the abelian group of order $2$. For a superspace $V=V_\oa\oplus V_\ob$, we let $p(\_): V_\oa \cup V_\ob\rightarrow \Z_2$ denote the parity function, that is, $p(x) =\oa$ if $x\in V_\oa$ and $p(x) =\ob$, if $x\in V_\ob$. Furthermore, we  let $\Pi$  denote the parity reversing functor  on the category of vector superspaces, that is, for a superspace $V=V_\oa\oplus V_\ob$, we define $\Pi(V)$ to be the superspace with $\Pi(V)_i=V_{i+\ob}$, for $i\in \Z_2$.
\subsection{Basic classical Lie superalgebras} 
Throughout,  we are mainly interested in the following {\em basic classical} Lie superalgebras $\mf s$  over $\C$ from Kac's list \cite{K77}:
\vskip0.2cm
\centerline{$\mathfrak{gl}(m|n),\,\,\mathfrak{sl}(m|n),\,\,\mathfrak{psl}(n|n),\,\,\mathfrak{osp}(m|2n),\,\, D(2,1;\alpha),\,\, G(3),\,\, F(4).$}
In particular, $\mf s$ is {\em quasi-reductive}, i.e., $\mf s_\oa$ is a reductive Lie algebra and $\mf s_\ob$ is semisimple under the adjoint action of $\mf s_\oa$.     
	 Recall that we fix a triangular decomposition $\mf s=\mf n^- \oplus \h \oplus \mf n$ in \eqref{eq::tri0} with the Cartan subalgebra $\h$. We then have a root space decomposition 	$\mf s=\bigoplus_{\alpha\in\Phi\cup \{0\}}\mf s^\alpha,~\mbox{with }\;\mf s^\alpha=\{x\in\mf s\,|\, [h,x]=\alpha(h)x,\;\forall h\in\mf h\},$ 	where  $\Phi\subseteq\mf h^\ast$ is the set of roots.
  We shall denote the sets of the corresponding positive and negative roots  by $\Phi^+$ and $\Phi^-$, respectively. The sets of even and odd roots are denoted respectively by $\Phi_{\oa}$ and $\Phi_{\ob}$ with self-explanatory notations $\Phi^\pm_i: =\Phi^\pm\cap \Phi_i$ for $i\in \Z_2$. The simple system in $\Phi^+$ will be denoted  by $\Delta$. Furthermore, we set $\Delta_i: =\Delta\cap \Phi_i$, for $i\in \Z_2$.   Recall that  $(\_|\_)$ denotes an even  non-degenerate  invariant supersymmetric bilinear
form of $\mf s$, which gives rise to non-degenerate bilinear forms of $\mf h$ and $\h^\ast$. We refer to \cite[Theorem 1.18, Remark 1.19]{CW12} for more details.


 \subsection{Central extensions} \label{sect::cen::212}
    Let  $\Lambda(\theta)$ be the Grassmann superalgebra in one indeterminate $\theta$. We form the Lie superalgebra $\mf s\otimes \Lambda(\theta)$ and let $[\_,\_]_0$ be its Lie bracket. Define a supersymmetric bilinear form $(\_|\_)'$ on $\mf s\otimes \Lambda(\theta)$ by   
\begin{align*}
	&(s_1\otimes f_1| s_2\otimes f_2)':= (-1)^{p(f_1) p(s_2)}(s_1|s_2)\int f_1\wedge f_2,
\end{align*}
for homogeneous $s_1,s_2\in \mf s$, and $f_1,f_2\in \Lambda(\theta)$, where $\int f_1\wedge f_2$ is the coefficient of $\theta$ in $f_1 f_2$. Then $(\_|\_)'$ gives rise to  an odd non-degenerate invariant form on $\mf s\otimes \Lambda(\theta)$. 

Let $D$ denote the derivation $\frac{\partial}{\partial\theta}$ of $\mf s\otimes \Lambda(\theta)$.  Following \cite[Section 1]{Ch93}, we can construct the corresponding 2-cocycle \begin{align}
&\alpha_D(x,y):=(D(x)|y)',~\forall x,y\in \mf s\otimes \Lambda(\theta), \label{lem::111}
\end{align} since $D$ satisfies the so-called {\em superskewsymmetric condition} of loc.~cit.: $$(D(x)|y)' =(-1)^{1+p(x)}(x|D(y))'= (-1)^{p(D)\cdot p(x)}(x|-D(y))',$$ for homogenous $x,y\in \mf s\otimes \Lambda(\theta)$. Consequently, we have the corresponding  central extension of $\mf s\otimes \Lambda(\theta)$: 
\begin{align*}
&0\rightarrow \C z \rightarrow \g \rightarrow \mf s\otimes \Lambda(\theta)\rightarrow 0,
\end{align*} where the bracket $[\_,\_]$ of $\g$ is given by $[x,y] = [x,y]_0+ \alpha_D(x,y)z$, that is, it is determined by  
\begin{align*}
&[s_1\otimes 1, s_2\otimes f] =[s_1,s_2]\otimes f,\\
&[s_1\otimes \theta,s_2\otimes \theta]= (-1)^{ p(s_2)} (s_1|s_2) z,  
\end{align*} for all homogenous elements $s_1,s_2\in \mf s$, and $f\in\Lambda(\theta)$. Note that $\g$ is precisely  the Lie superalgebra with the same notation  introduced in {\eqref{eq::defofg:1}}. More generally, central extensions of $\mf s\otimes\Lambda(n)$, have been determined in \cite[Section 1]{Ch93}, where $\Lambda(n)$ stands for the Grassmann superalgebra in $n$ indeterminates.

  In the paper, we write $\g = \mf s\otimes \Lambda(\theta) \oplus \C z$, as vector spaces.  In addition, for any subspace $\mf a\subseteq \mf s$ and element $x\in \mf s$, we  set 
  \begin{equation} \label{eq:bar}
      \ov{\mf a}:=\mf a\otimes \theta \subseteq \ov{\mf s} \quad  \text{ and }  \quad  \ov x:=x\otimes \theta \in \ov{\mf s}.
  \end{equation}

Recall that we fixed a triangular decomposition for $\mf s$ in \eqref{eq::tri0}. Note that the adjoint action of $\mf t:=\mf h+\C z$ on $\g$ is semisimple. We let $\mf{\hat h}$ denote the centralizer subalgebra $\g^{\mf t}$, that is, $\hat \h= \mf h\otimes \Lambda(\theta)+\C z$ (note that  $\mf t=\hat{\mf h}_\oa$).

	 The adjoint action of $\h$ on $\g$ gives rise to  a root space decomposition $\mf g=\bigoplus_{\alpha\in\Phi\cup \{0\}}\g^\alpha$, with $\g^\alpha=\{x\in\g\,|\, [h,x]=\alpha(h)x,\;\forall h\in\mf h\}$.   This  leads to the following triangular decomposition: 
\begin{align}
&\g =\hat{\mf n}\oplus \hat \h\oplus \hat{\mf n}^-, \label{eq::trian}
\end{align} where $
\hat{\mf n}=\mf n\otimes \Lambda(\theta) =\bigoplus_{\alpha\in \Phi^+}\g^\alpha$ and 
$\hat{\mf n}^-=\mf n^-\otimes \Lambda(\theta) =\bigoplus_{\alpha\in \Phi^-}\g^\alpha$.

\section{Representation Theory} \label{sect::repcat}
In this section, we first introduce the category $\mc O$ and the Fock space $\mf F_c$ over $\g$, for non-zero scalars $c$.  Then we study the Whittaker modules for $\g$ and establish \mbox{Theorem \ref{thm::1}}.

For a given  Lie superalgebra $\mf a$, we denote the category of   finitely generated $\mf a$-modules by $\mf a\mod$.  We denote the universal enveloping algebra of $\mf a$ by $U(\mf a)$.  Throughout this section $\mf s$ denotes a basic classical Lie superalgebra.

\subsection{Category $\mc O$} \label{sect::catO}
\subsubsection{} 
We define the category $\mc O$, with respect to the triangular decomposition \eqref{eq::trian}, as the full subcategory of $\g\mod$ consisting of objects $M$ satisfying the following:
\begin{itemize}
	\item $M$ is semisimple over $\mf t$;
	\item $M$ is locally nilpotent over  $U(\hat{\mf n})$. 
\end{itemize}
Let $M\in \mc O$ and $\nu\in \h^\ast$. We let $M^\nu$ denote the weight space of $\nu$ by 
\begin{align*}
&M^\nu := \{m\in M|~h\cdot m = \nu(h)m, ~\text{for all }h\in \h\}.
\end{align*}
Using the same argument as in the classical  BGG category $\mc O$ (see, e.g., \cite[Section 1.1]{Hu08}), it follows that 
\begin{align*}
&\dim M^\nu <\infty,~\text{ for all }\nu\in \mf h^\ast;\\
&\{\nu \in \mf h^\ast|~M^\nu \neq 0\}\subseteq \cup_{\mu \in X} (\mu -\Z_{\geq 0}\Phi^+),~\text{for some finite subset }X\subset \mf h^\ast.
\end{align*} For any module $M\in \mc O$, we define the {\em character} of $M$ to be $$\ch M:=\sum_{\nu\in \h^\ast} \dim M^\nu e^\nu.$$

\subsubsection{Irreducible $\hat \h$-modules} \label{sect::212}
In this subsection, we review the well-known construction of finite-dimensional simple $\hat \h$-modules (see, e.g., \cite[Section 3]{Pe86}, \cite[Subsection 1.5.4]{CW12}).
For $\la \in \mf t^\ast$,   define a symmetric bilinear form $\langle\_,\_\rangle$ on $\hat \h_\ob$ by \[\langle v,w \rangle_\la:= \la([v,w]), \text{ for }v,w\in \hat \h_\ob.\]
We note that   the radical $\text{Rad}_\la$ of  $\langle\_,\_\rangle_\la$ is zero if  $\la(z)\neq 0$, and all of $\hat \h_\ob$ otherwise. If we set $J_\la$ to be the ideal of $U(\hat \h)$ generated by $\text{Rad}_\la$ and $a-\la(a)$ for $a\in \mf t$, then  $U(\hat \h)/J_\la$ is isomorphic to a Clifford superalgebra $\mc{C}_{\la}$. 
 
Let $\hat \h'\subseteq \hat \h_\ob =\ov{\mf h}$ be a maximal isotropic subspace with respect to the bilinear form $\alpha_D|_{\hat \h_\ob}$ of \eqref{lem::111}.  For $\la\in \mf t^\ast$, we let $\C_\la$ be the one-dimensional $\mf t$-module corresponding to $\la$. Suppose that $\la(z)=0$, then $\C_\la$ extends to a one-dimensional $\hat \h$-module, which we denote by $W_\la$. If $\la(z)\neq 0$, we extend  $\C_\la$ to a $\mf t+\hat \h'$-module by letting $\hat \h' \C_\la=0$. In this case, we use the same notation to define  the following  finite-dimensional $\hat \h$-module $$W_\la:=\Ind_{\mf t+\hat\h'}^{\hat \h} \C_\la.$$

 Recall the well-known fact that $\mc C_\la$ is a finite-dimensional simple associative superalgebra, and thus has a unique simple ($\Z_2$-graded) module  $\widetilde W_\la$; see, e.g., \cite[Chapter 3]{CW12}. Furthermore, the dimension of $\widetilde W_\la$ is  $2^{\lfloor \frac{\dim \h+1}{2} \rfloor}$ if $\la(z)\neq 0$, and it is one-dimensional otherwise. 
In the case when $\la(z)\neq 0$, the action of $U(\hat \h)$ descends to the action of the Clifford superalgebra $\mc C_\la$. Since $W_\la$ is indecomposable, it is simple, and hence we may identify $W_\la$ with $\widetilde W_\la$, up to parity.  This implies that $W_\la$ is simple. We have the following (see, e.g, \cite[Lemma 1.42]{CW12}).
\begin{lem}
$\{W_\la,~\Pi W_\la|~\la\in \mf t^\ast\}$ is an exhaustive list of  all finite-dimensional simple $\hat \h$-modules.
\end{lem}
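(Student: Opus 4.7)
The plan is to reduce the classification of finite-dimensional simple $\hat{\mf h}$-modules to the well-known classification of simple supermodules over the Clifford superalgebra $\mc C_\la$, which has already been set up in the preceding paragraph.

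\emph{Step 1 (Central character).} The first step is to observe that the even part $\mf t=\mf h+\C z$ lies in the center of $\hat{\mf h}$. This follows from the abelianness of $\mf h$ and the defining brackets of Subsection~\ref{sect::cen::212}: $[h_1,h_2\otimes\theta]=[h_1,h_2]\otimes\theta=0$ and $z$ is central by construction. For a finite-dimensional simple $\hat{\mf h}$-module $V=V_\oa\oplus V_\ob$, each element of $\mf t$ acts by an even operator commuting with every element of $\hat{\mf h}$. A super-Schur argument (apply the ordinary Schur lemma within each parity component and exploit the odd operators in $\bar{\mf h}$ to match eigenvalues between $V_\oa$ and $V_\ob$) shows that $\mf t$ acts through a single character $\la\in\mf t^\ast$.

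\emph{Step 2 (Factoring through $\mc C_\la$).} Next I would verify that the ideal $J_\la$ annihilates $V$, so that $V$ becomes a module over $U(\hat{\mf h})/J_\la\cong\mc C_\la$. The generators of the form $a-\la(a)$, $a\in\mf t$, act as zero by Step 1. For $v\in\mathrm{Rad}_\la$ and any $w\in\bar{\mf h}$, the bracket $[v,w]$ lies in $\C z\subseteq\mf t$ and $\la([v,w])=0$ by definition of the radical, so on $V$ one gets $vw+wv=0$ and in particular $v^2=0$. Hence $vV$ is a $\Z_2$-graded subspace that is stable under all of $\hat{\mf h}$, i.e.\ a submodule; simplicity forces $vV=0$ or $vV=V$, and the latter together with $v^2=0$ yields $V=v^2V=0$, a contradiction. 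Therefore $\mathrm{Rad}_\la$ acts by zero on $V$, and $V$ descends to a $\mc C_\la$-module.

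\emph{Step 3 (Applying the Clifford classification).} Now invoke the structure already recalled: $\mc C_\la$ is a finite-dimensional simple associative superalgebra with a unique simple supermodule $\widetilde W_\la$ up to parity (see \cite[Chapter~3]{CW12}). Hence $V\cong\widetilde W_\la$ or $V\cong\Pi\widetilde W_\la$. In the case $\la(z)=0$, $\mc C_\la=\C$ and $\widetilde W_\la=\C_\la=W_\la$; in the case $\la(z)\neq0$, the preceding discussion has already identified $W_\la$ with $\widetilde W_\la$ (up to parity). In either case $V\cong W_\la$ or $V\cong\Pi W_\la$. For distinctness, $W_\la$ and $W_\mu$ have different $\mf t$-weights when $\la\neq\mu$, so the list is free of repetitions beyond the indicated parity shift.

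The main obstacle is \emph{Step 2}: showing that $\mathrm{Rad}_\la$ acts by zero on an arbitrary simple $V$, because a priori the odd elements of $\mathrm{Rad}_\la$ could act as nonzero nilpotent operators. Once the submodule argument $vV\subseteq V$ combined with $v^2=0$ is made rigorous, everything else reduces to the established classification over Clifford superalgebras and to standard super-Schur reasoning in Step~1.
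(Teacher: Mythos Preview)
Your proof is correct. The paper does not supply its own argument for this lemma; it simply refers to \cite[Lemma~1.42]{CW12} after having set up the Clifford superalgebra $\mc C_\la$ and the identification of $W_\la$ with its unique simple module. Your three-step argument (central character on $\mf t$, annihilation by $J_\la$, then the Clifford classification) is exactly the standard proof behind that reference and is fully consistent with the preparatory material the paper has already laid out. In particular, your handling of the ``main obstacle'' in Step~2 is fine: the key point that $[w,v]$ acts by zero on $V$ for every $w\in\hat{\mf h}$ (using centrality of $\mf t$ for even $w$ and the radical condition for odd $w$) makes the submodule argument go through cleanly.
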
  
We remark that, if $\la(z)\neq 0$, then there is an odd automorphism of $W_\la$ if and only if the dimension of $\hat \h_\ob$ (i.e., $\dim \mf h$) is odd.

\subsubsection{Verma and simple modules} 

A $\g$-module $M$  is said to be a {\em highest weight module} of highest weight $\la\in \mf t^\ast$ if $M$ is generated by an irreducible $\hat \h$-submodule $W\subset M$ isomorphic to $W_\la$ such that $\hat{\mf n}W=0$. 
Let $\hat{\mf b}:=\hat{\mf n}+\hat{\mf h}$. 
 
For each $\la\in \mf t^\ast$ let $W_\la$ be as in Section \ref{sect::212} and regard it as a $\hat{\mf b}$-module by letting $\hat{\mf n}W_\la=0$. Define the  {\em Verma module}
\[M(\la):= \Ind_{\hat{\mf b}}^{\g} W_\la.\]
The following lemma shows that modules in $\mc O$ admit finite filtrations with highest weight sections. 
\begin{lem} \label{lem::33} Let $M\in \mc O$. Then $M$ admits a finite filtration $0=M_0\subseteq M_1 \subseteq \cdots \subseteq M_\ell =M$ such that each $M_{i+1}/M_i$ is a highest weight module.
\end{lem}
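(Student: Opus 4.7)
The plan is to reduce the statement to a finite-dimensional assertion about $\hat{\mf b}$-modules and then propagate the resulting filtration to $\g$ via induction. The approach mimics the classical BGG argument (see, e.g., \cite[Section 1.1]{Hu08}), with the extra bookkeeping required because $\hat{\mf h}$ is non-abelian: highest weight modules here are generated by \emph{simple} $\hat{\mf h}$-modules $W_\la$ rather than by single vectors.

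First, I would find a finite-dimensional $\hat{\mf b}$-submodule $V \subseteq M$ that generates $M$ as a $\g$-module. Since $M$ is finitely generated and $\mf t$-semisimple, we may pick finitely many weight vectors $v_1,\dots,v_k$ that generate $M$, say of $\mf t$-weights $\mu_1,\dots,\mu_k$. Set
\[
V := \sum_{i=1}^{k} U(\hat{\mf b})\, v_i.
\]
The weights of $V$ lie in $\bigcup_i (\mu_i + \Z_{\geq 0}\Phi^+)$, but they must also lie in the support of $M$, which by the $\mc O$-hypothesis is contained in finitely many downward cones $\bigcup_{\mu\in X}(\mu - \Z_{\geq 0}\Phi^+)$. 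Hence only finitely many weights occur in $V$; combined with finite-dimensionality of weight spaces in $\mc O$, this forces $\dim V < \infty$.

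Next, I would filter $V$ by $\hat{\mf b}$-submodules. Since $\hat{\mf n}$ is a nilpotent ideal of $\hat{\mf b}$ acting locally nilpotently on $M$, it acts nilpotently on $V$, so on any simple $\hat{\mf b}$-subquotient the ideal $\hat{\mf n}$ must act by zero. Thus the simple $\hat{\mf b}$-subquotients of $V$ are precisely the simple finite-dimensional $\hat{\mf h}$-modules of the form $W_\la$ (or $\Pi W_\la$) classified in Section \ref{sect::212}. A Jordan--H\"older series then produces a finite chain
\[
0 = V_0 \subset V_1 \subset \cdots \subset V_\ell = V
\]
of $\hat{\mf b}$-submodules with $V_i/V_{i-1} \cong W_{\la_i}$ (up to parity). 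Setting $M_i := U(\g)V_i$, one obtains $0 \subseteq M_1 \subseteq \cdots \subseteq M_\ell = M$. Each quotient $M_i/M_{i-1}$ is generated by the image of $V_i/V_{i-1}$, which is either zero (in which case the layer is redundant and may be dropped) or, by simplicity of $W_{\la_i}$, isomorphic to $W_{\la_i}$, and in either case is annihilated by $\hat{\mf n}$. Hence each non-trivial $M_i/M_{i-1}$ is a highest weight module in the sense of the preceding definition.

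The main subtlety I anticipate lies in verifying that $\hat{\mf n}$ acts by zero on simple $\hat{\mf b}$-subquotients of $V$, since $\hat{\mf b}$ is a Lie \emph{superalgebra} and we should not invoke Engel's theorem naively. However, because $\hat{\mf n}$ is already known to act locally nilpotently on $V$ (a $\g$-module in $\mc O$) and is a two-sided ideal of $\hat{\mf b}$, its $V$-annihilator in any simple $\hat{\mf b}$-quotient is nonzero and $\hat{\mf b}$-stable, hence everything; this is the only step that requires more than bookkeeping. The remaining arguments -- finite-dimensionality of $V$ and pushing the $V$-filtration up to a $\g$-filtration via the exact functor $U(\g)\otimes_{U(\hat{\mf b})}(-)$ on the successive quotients -- are routine.
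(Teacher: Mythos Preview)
Your proposal is correct and follows essentially the same route as the paper's proof: find a finite-dimensional $\hat{\mf b}$-generating submodule $V\subseteq M$, observe that every finite-dimensional simple $\hat{\mf b}$-module is a simple $\hat{\mf h}$-module with trivial $\hat{\mf n}$-action, and push a Jordan--H\"older filtration of $V$ up to $M$. The paper is terser (it phrases the last step as ``$M$ is an epimorphic image of $\Ind_{\hat{\mf b}}^{\g}V$''), while you spell out the finite-dimensionality of $V$ and the $\hat{\mf n}$-triviality argument, but the content is the same.
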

\begin{proof} 
It follows by the definition of $\mc O$ that $M$ is generated by a finite-dimensional $\hat{\mf b}$-submodule $V\subseteq M$ (recall that $\h$ is purely even). Therefore, $M$ is an epimorphic image of $\Ind_{\hat{\mf b}}^{\g} V$. Now, the lemma follows from the fact that any finite-dimensional irreducible $\hat{\mf b}$-module is an irreducible $\hat{\mf h}$-module on which $\hat{\mf n}$ acts trivially.
\end{proof}

It follows by a standard argument that $M(\la)$ has a unique simple subquotient, which we shall denote by $L(\la)$.  Thanks to Lemma \ref{lem::33}, we have the following classification of simple objects in $\mc O$. 
\begin{lem} The set $$\{L(\la),~\Pi L(\la)|~\la\in \mf t^\ast\},$$ is an exhaustive list of  all   simple objects in $\mc O$. 
\end{lem}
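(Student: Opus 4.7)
The plan is to leverage Lemma \ref{lem::33} together with the standard ``maximal submodule'' argument for highest weight modules. First I would check that each $L(\la)$ does belong to $\mc O$ and is simple: by the PBW theorem applied to $U(\hat{\mf n}^-)$, the Verma module $M(\la)=\mathrm{Ind}_{\hat{\mf b}}^{\g}W_\la$ has finite-dimensional weight spaces and all its weights lie in $\la-\Z_{\geq 0}\Phi^+$ (modulo the extra Cartan directions coming from $\hat\h$, which sit inside the weight zero part of $\ov{\mf h}$), so $M(\la)\in\mc O$; and since any proper $\g$-submodule of $M(\la)$ must avoid the generating copy of $W_\la$ (otherwise it would equal $M(\la)$ by simplicity of $W_\la$ as an $\hat\h$-module), the sum of all proper submodules is proper, giving a unique maximal submodule and hence a unique simple quotient $L(\la)\in\mc O$.

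For exhaustiveness, let $L\in\mc O$ be a simple object. By Lemma \ref{lem::33}, $L$ admits a finite filtration whose successive subquotients are highest weight modules. Taking the smallest nonzero term of this filtration produces a highest weight submodule of $L$, which by simplicity must equal $L$ itself. Thus $L$ is generated by an irreducible $\hat\h$-submodule $W\subset L$ with $\hat{\mf n}W=0$, and the classification of finite-dimensional simple $\hat\h$-modules recalled in Subsection \ref{sect::212} gives $W\cong W_\la$ or $W\cong \Pi W_\la$ for some $\la\in \mf t^\ast$.

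By Frobenius reciprocity, $L$ is then a nonzero quotient of $M(\la)$ or of $\Pi M(\la)$. Since $L$ is simple and $M(\la)$ has a unique simple quotient $L(\la)$, we conclude that $L\cong L(\la)$ or $L\cong \Pi L(\la)$, finishing the proof.

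I do not expect any serious obstacle: the only mild subtlety is the bookkeeping with the parity-reversing functor $\Pi$, which is why the list in the statement contains both $L(\la)$ and $\Pi L(\la)$; this is already accounted for by the two possibilities for the isomorphism class of the generating $\hat\h$-module $W$. Everything else reduces to the universal property of $M(\la)$ and to the filtration provided by Lemma \ref{lem::33}.
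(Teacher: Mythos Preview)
Your proposal is correct and follows essentially the same route as the paper, which does not spell out a proof but simply notes that the classification follows from Lemma~\ref{lem::33} together with the standard highest-weight argument you outline. A simple object in $\mc O$ is itself a highest weight module (via the bottom nonzero step of the filtration in Lemma~\ref{lem::33}), and the universal property of $M(\la)$ combined with the uniqueness of its simple top identifies it with $L(\la)$ or $\Pi L(\la)$.
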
   

\subsubsection{Block decomposition} \label{sect::314} We observe that $\mc O$ decomposes into $\mc O =\bigoplus_{c\in \C} \mc O^c$, where $\mc O^c$ is the full subcategory consisting of objects in $\mc O$ on which $z-c$ acts locally nilpotently. For any $\mf s$-module $M$, we can  lift it to a $\g$-module by letting $\mf c:=\ov{\mf s}+\C z$ act trivially on $M$ . This leads to a full and faithful exact functor $\iota_{\mf s}$ from 
the BGG category $\mc O(\mf s)$ of $\mf s$ with respect to the triangular decomposition \eqref{eq::tri0} to $\mc O^0$. In particular, if $M_{\mf s}(\gamma)$ and $L_{\mf s}(\gamma)$ respectively denote the target images of  the Verma and the simple module over $\mf s$ with highest weight $\gamma\in \h^\ast$ under $\iota_{\mf s}$, then $\iota_{\mf s}(M_{\mf s}(\gamma))$ is a quotient of $M(\gamma)$ and $L_{\mf s}(\gamma) =L(\gamma)$ ($\gamma\in \h^\ast$) exhaust all simple objects of $\mc O^0.$ Here, for each $\gamma \in \mf h^\ast$, we  view it as an element of $\mf t^\ast$ by setting $\gamma(z)=0$.

Therefore, we may focus on the blocks $\mc O^c$ with non-zero central charges $c$.  In this case, for each $L(\la)\in \mc O^c$ (i.e.,  $\la(z) = c\neq 0$), we have that $\dim W_\la >1$ and thus $L(\la)\cong \Pi L(\la)$ if and only if $\dim \hat\h_\ob$ is odd. 

From now on, for a given simple module $L$, it will not important for us to make the distinction between $L$ 
and $\Pi L$,  and so we shall simply use $L$ denote either of them.

\subsubsection{Tensor product of simple modules}

Recall that the algebra of endomorphisms of a  ($\Z_2$-graded) simple module over a Lie superalgebra $\mf a$ is either one-dimensional or  spanned  by the identity and an involution interchanging the even and odd parts of the module; see, e.g., \cite[p.609]{K78} or \cite[Proposition 8.2]{Ch95}. We say a simple $\mf a$-module is of {\em type} $\texttt  A$ if its endomorphism algebra is $\C$ and is of {\em type} $\texttt Q$ otherwise.

\begin{lem} \label{lem::333}
  Every simple $\mf s$-module is of type {\em $\texttt A$}.
\end{lem}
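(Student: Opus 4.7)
My plan is to argue by contradiction from the super Schur's lemma recalled in the paragraph just above the statement. If a simple $\mf s$-module $L$ is not of type $\texttt A$, then $\End_{\mf s}(L)$ is spanned by $\id_L$ together with an odd involution $\phi$ satisfying $\phi^{2}=\id_L$, and I would rule out this type-$\texttt Q$ case.

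The first step is to unpack $\mf s$-linearity. Since $\phi$ is odd, for every homogeneous $x\in\mf s$ one has
$$ \phi\circ x \;=\; (-1)^{p(x)}\,x\circ\phi \quad\text{in }\End(L). $$
Thus $\phi$ commutes with the purely even Cartan $\h$ and with all of $\mf s_{\oa}$, and anticommutes with $\mf s_{\ob}$; in particular $\phi$ restricts to an $\mf s_{\oa}$-module isomorphism $L_{\oa}\xrightarrow{\sim} L_{\ob}$.

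Next, I would carry out a highest-weight-type argument using the triangular decomposition $\mf s=\mf n^{-}\oplus\h\oplus\mf n$ of \eqref{eq::tri0}. Pick a nonzero homogeneous $\h$-weight vector $v\in L^{\la}$ annihilated by $\mf n$ (such $v$ is available for the simple modules of interest, e.g.\ those in category $\mc O$ of Section~\ref{sect::catO}). Commutation with $\h$ yields $\phi(v)\in L^{\la}$, while commutation with the even positive root vectors and anticommutation with the odd positive root vectors both give $X_{\alpha}\phi(v)=\pm\phi(X_{\alpha}v)=0$ for every $\alpha\in\Phi^{+}$. Hence $\phi(v)$ is itself a highest weight vector of weight $\la$ in $L$; since the $\la$-highest-weight space of a simple $\mf s$-module is one-dimensional (a consequence of $\h$ being purely even and the uniqueness of $L$'s highest weight line), $\phi(v)\in\C v$. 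But $\phi(v)$ has parity opposite to $v$, so this forces $\phi(v)=0$, contradicting $\phi^{2}=\id_L$.

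The main obstacle is covering simple modules that do not admit a highest weight vector for any triangular decomposition. For such modules I would fall back on the super Schur lemma for basic classical Lie superalgebras in the form proved in \cite[p.~609]{K78} and \cite[Proposition~8.2]{Ch95}, already cited by the authors; these references rule out odd $\mf s$-equivariant automorphisms in general, yielding the type-$\texttt A$ conclusion uniformly across Kac's list.
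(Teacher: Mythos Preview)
Your highest-weight argument is correct as far as it goes, but the proposed fallback for non-highest-weight modules does not work, and this is exactly where the content of the lemma lies. The references \cite[p.~609]{K78} and \cite[Proposition~8.2]{Ch95} are cited in the paper only to record the general \emph{dichotomy} that $\End_{\mf a}(L)$ is either $\C$ or $\C\oplus\C\phi$ with $\phi$ an odd involution, for an arbitrary Lie superalgebra $\mf a$; they do \emph{not} exclude the type-$\texttt Q$ alternative for basic classical $\mf s$. (The same super Schur lemma applies verbatim to $\mf q(n)$, which does admit type-$\texttt Q$ simples.) So as written your argument only treats simple modules that possess a highest weight vector --- and many simple $\mf s$-modules, for instance the Whittaker modules that are the subject of this very paper, have no such vector for any triangular decomposition.

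The paper bridges this gap with an ingredient you are missing: Musson's super analogue of Duflo's theorem \cite[Main Theorem~2.2]{Mu92}, which asserts that the annihilator in $U(\mf s)$ of an \emph{arbitrary} simple module $L$ coincides with the annihilator of some simple \emph{highest weight} module $V$. Your observation (one-dimensional highest weight line) shows that $V$ is of type~$\texttt A$, so by the super Burnside theorem the quotient $U(\mf s)/\Ann_{U(\mf s)}V$ is a dense subalgebra of $\End_\C(V)$. But this quotient is isomorphic to the image of $U(\mf s)$ in $\End_\C(L)$, and that forces $L$ to be of type~$\texttt A$ as well. In short, the reduction to the highest-weight case goes through primitive ideals, not through the Schur-type references you invoke.
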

\begin{proof}
Suppose that $L$ is a ($\Z_2$-graded) simple module  over $\mf s$ with the associated homomorphism of Lie superalgebras $\phi: U(\mf s)\rightarrow \End_{\C}(L)$. By the super analogue of Duflo's theorem \cite[Main Theorem 2.2]{Mu92}, the kernel of $\phi$ coincides with the annihilator $\Ann_{U(\mf s)}V$ of an irreducible highest weight module $V$ over $\mf s$. Since the highest weight space of $V$ is one-dimensional, it follows that $V$ is of type $\texttt A$. Therefore, $U(\mf s)/\Ann_{U(\mf s)}V$ is isomorphic to a dense subalgebra of $\End_\C(V)$. Since $U(\mf s)/\Ann_{U(\mf s)}V \cong \phi(U(\mf s))$, we may conclude that $L$ is of type $\texttt A$.   
\end{proof}

Recall that $\mf c$ denotes the nilpotent ideal $\mf s\otimes \theta+\C z$ of $\g$, so that we have $\g =\mf s\oplus\mf c$ as a vector space.
\begin{prop} \label{prop::4}  Suppose that  $S$ is a simple $\g$-module such that   the images of the representation maps from $U(\mf g)$ and $U(\mf c)$ in  $\End_\C(S)$ coincide. Then the tensor product $L\otimes S$ is a simple $\g$-module, for any  simple $\mf \g$-module   $L$ such that $\mf c L=0$.  
\end{prop}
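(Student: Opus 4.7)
The plan is to show that any nonzero $\g$-submodule $M$ of $L\otimes S$ coincides with $L\otimes S$. The guiding idea is that although the $\g$-action on $L\otimes S$ is defined through the coproduct, the hypothesis on $S$ forces it to split into two essentially independent actions: that of $U(\mf s)$ on the first tensor factor and that of $U(\g)$ on the second. Once this decoupling is recorded, a Jacobson density argument finishes the proof.

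First I would examine how $U(\mf c)$ acts on $L\otimes S$. Since $\mf c$ is a sub-Lie-superalgebra of $\g$, $U(\mf c)$ is a sub-Hopf-superalgebra of $U(\g)$, so for $u\in U(\mf c)$ the coproduct decomposes as $\Delta(u)=\sum u_{(1)}\otimes u_{(2)}$ with $u_{(i)}\in U(\mf c)$. Because $\mf c L=0$, the augmentation ideal $U(\mf c)^+$ annihilates $L$, and so each $u_{(1)}$ acts on $L$ by $\epsilon(u_{(1)})$; the resulting operator on $L\otimes S$ equals $1\otimes \phi(u)$ in the graded-tensor sense, where $\phi\colon U(\g)\to\End_\C(S)$ is the representation on $S$. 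By the hypothesis $\phi(U(\mf c))=\phi(U(\g))$, every operator $1\otimes \phi(y)$ with $y\in U(\g)$ therefore lies in $\rho(U(\g))$, where $\rho\colon U(\g)\to\End_\C(L\otimes S)$ denotes the action on the tensor product. Now for any homogeneous $x\in\mf s\subseteq\g$ the coproduct gives $\rho(x)=x\otimes 1+1\otimes\phi(x)$, and subtracting the second summand (already in $\rho(U(\g))$) shows that $\psi(x)\otimes 1\in \rho(U(\g))$, where $\psi\colon U(\mf s)\to\End_\C(L)$ is the $\mf s$-representation on $L$. Thus both $\psi(U(\mf s))\otimes 1$ and $1\otimes\phi(U(\g))$, and hence their product $\psi(U(\mf s))\otimes\phi(U(\g))$, lie in $\rho(U(\g))$.

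To finish, I would invoke Lemma \ref{lem::333}: $L$ is a simple $\mf s$-module of type $\texttt A$, so $\End_{\mf s}(L)=\C$ and the Jacobson density theorem applies to $\psi(U(\mf s))\subseteq \End_\C(L)$. Choose a nonzero $m\in M$ and write $m=\sum_{i=1}^k v_i\otimes s_i$ with $v_1,\dots,v_k\in L$ linearly independent and $k$ minimal, so every $s_i\neq 0$. Density produces $a\in \psi(U(\mf s))$ with $av_1=v_1$ and $av_i=0$ for $i\geq 2$, giving $(a\otimes 1)m=v_1\otimes s_1\in M$. Since $S$ is simple as a $\g$-module, $\phi(U(\g))s_1=S$, and acting by $1\otimes\phi(U(\g))$ yields $v_1\otimes S\subseteq M$. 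Applying $\psi(U(\mf s))\otimes 1$ and using simplicity of $L$ over $\mf s$ produces $L\otimes S\subseteq M$, whence $M=L\otimes S$.

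The only mildly delicate point is the first step: verifying that, taking the Koszul signs of the super coproduct into account, the operator $\rho(u)$ really agrees with $1\otimes \phi(u)$ for $u\in U(\mf c)$ and that all of $1\otimes\phi(U(\g))$ is reached as we vary $u$; after this, no substantive obstacle is expected, and the rest is a standard tensor-product-of-irreducibles argument enabled by the type $\texttt A$ property of simple $\mf s$-modules.
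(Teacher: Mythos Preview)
Your argument is correct. The delicate first step does go through: since $\mf c$ annihilates $L$, for any $c\in\mf c$ one has $\rho(c)(l\otimes s)=(-1)^{p(c)p(l)}l\otimes(cs)=(1\otimes\phi(c))(l\otimes s)$ in the graded tensor sense, and this propagates multiplicatively to all of $U(\mf c)$; the hypothesis $\phi(U(\mf c))=\phi(U(\g))$ then gives $1\otimes\phi(U(\g))\subseteq\rho(U(\g))$, and subtracting yields $\psi(\mf s)\otimes 1\subseteq\rho(U(\g))$, hence $\psi(U(\mf s))\otimes 1\subseteq\rho(U(\g))$ as you claim.

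Your route differs from the paper's in a genuinely useful way. The paper applies density on the $S$-side via $U(\mf c)$, which forces a case distinction according to whether $S$ is of type $\texttt A$ or type $\texttt Q$ as a $\mf c$-module; the type $\texttt Q$ case then needs an extra reduction (using an odd $\mf c$-automorphism of $S$) before Lemma~\ref{lem::333} can be invoked. You instead apply density on the $L$-side via $U(\mf s)$, where Lemma~\ref{lem::333} guarantees type $\texttt A$ from the outset, so no case split is needed and the argument becomes a clean two-line tensor-product-of-irreducibles computation. The price you pay is the preliminary structural observation that $\psi(U(\mf s))\otimes 1$ and $1\otimes\phi(U(\g))$ both act through $\rho(U(\g))$, but this is exactly the conceptual content of the hypotheses and is worth isolating. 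Both proofs ultimately rest on Lemma~\ref{lem::333}; yours uses it once, up front, while the paper uses it only to escape the type $\texttt Q$ branch.
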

\begin{proof} Let $x$ be a homogeneous non-zero element of $L\otimes S$. Set  $N$  to be the cyclic submodule $U(\g)x$. The goal is to show that $N= L\otimes S$. Let $x =\sum_{i=1}^k v_i\otimes w_i$, for some homogeneous vectors $v_i\in L$ and $w_i\in S$ such that  $w_i$'s are linearly independent. First, suppose that $k=1$. Then $x=v_1\otimes w_1$ generates the $L\otimes S$. To see this, for any homogeneous vector $w\in S$, we can choose an element $c\in U(\mf c)$ such that $cw_1 = w$, and so $ cx=v_1\otimes w \in N$. If $v\in L$ is a homogeneous vector, then we choose $s\in U(\mf s)$ such that $sv_1 =v$. Suppose  $s\in \mf s$. We have $s(v_1\otimes w) - (sv_1)\otimes w= (-1)^{p(v_1)p(s)}v_1\otimes sw \in N$ and it follows that $v\otimes w\in N$. The general case can be argued inductively. Therefore we have $N =L\otimes S$ in this case.

Next, we assume that $k$ is an arbitrary positive integer strictly bigger than $2$.  If $S$ is of type $\texttt A$ as a $\mf c$-module, then  there exists an element $r\in U(\mf c)$ such that $rw_1\neq 0$ and $rw_i=0$ for  $i\neq 1.$  In this case, we have $rx =v_1\otimes w_1$ and thus the conclusion follows by the argument above. If $S$ is of type $\texttt Q$ as a $\mf c$-module, then it follows by the Burnside's theorem for superalgebras (see, e.g., \cite[Proposition 8.2]{Ch95}) that  there are element $r\in U(\mf c)$ and an odd automorphism $q: S\rightarrow S$ such that  $$ 0\neq v_1\otimes w'+v_2\otimes q(w') =rx \in N,$$ for some homogeneous  vectors $v_1, v_2\in L$ and $w'\in S$. Since $\mf cL=0$, it follows that  elements of the form $v_1\otimes w+v_2\otimes q(w)$  lie in $N$, for $w\in S$. Therefore, for any $x\in \mf s$ and $w\in S$, we have  $$(xv_1)\otimes w+(xv_2)\otimes q(w) - x\cdot (v_1\otimes w+v_2\otimes q(w))\in N.$$
By Lemma \ref{lem::333} there is an element $r\in U(\mf s)$ such that $rv_1 \neq 0$ and $rv_2=0$, and hence  $N$ contains a non-zero homogenous element of the form $rv_1\otimes w$. Consequently, $N=L\otimes S$. This completes the proof. 
 \end{proof}

\subsubsection{Fock space $\mf F_c$} \label{sect::Fock} In this subsection  $c$ denotes a fixed non-zero complex number. 
Recall the  bilinear form $\alpha_D$ from \eqref{lem::111}. Let $I\subseteq\ov{\mf s}$ be a maximal isotropic subspace with respect to $\alpha_D$ containing $\ov{\mf n}$.   Let $\C |0\rangle$ be the one-dimensional module over $I+\C z$ spanned by the vector $|0\rangle$  determined by $z|0\rangle = c|0\rangle$ and $I |0\rangle =0$. We define the following $\mf c$-module
\begin{align*}
&\mf F_c := \Ind^{\mf c}_{I+\C z}|0\rangle, 
\end{align*} 
which is referred to as {\em Fock space}.
 	Let $\phi_0: U(\mf c)\rightarrow \End_\C(\mf F_c)$ denote the homomorphism associated with the Fock space representation $\mf F_c$. To describe the image of $\phi_0$,  we choose $E_\alpha\in \mf s^\alpha$ and   $F_\alpha\in \mf s^{-\alpha}$ such that $(E_\alpha|F_\alpha) =1$, for each $\alpha \in \Phi^+$. Furthermore, we choose an orthonormal basis $\{H_i\}_{i=1}^\ell$ of $ \mf h$ with respect to $(\_|\_)$. Let $U$ denote the underlying superspace $\ov{\mf n}^-:=\mf n^- \otimes \theta$. Recall that the  {\em Weyl-Clifford superalgebra $\mf{WC}(U)$} associated with $U$ can identified with the superalgebra of polynomial differential operators on the supersymmetric algebra  $\mc S(U ) = \mc S( U_\oa)\otimes \Lambda( U_\ob),$ where $\mc S(U_\oa)$ and  $\Lambda(U_\ob)$ are identified respectively with the polynomial algebra in  variables $\{{\ov{F_\alpha}}|~\alpha\in \Phi^+_\ob\}$ and the Grassmann superalgebra in  indeterminates $\{{\ov{F_\alpha}}|~\alpha\in \Phi^+_\oa\}$; see, e.g., \mbox{\cite[Section 5.1]{CW12}}. For $\alpha\in \Phi^+$, we define the corresponding operators of multiplication $\ell_{\ov{F_\alpha}}$ and   (super-)derivation  $D_{\ov{F_\alpha}}$: 
	\begin{align*}
	&\ell_{\ov{F_\alpha}}:~\mf F_c\rightarrow \mf F_c,~ f\mapsto \ov{F_\alpha} f,\\
	& D_{\ov{F_\alpha}}:~\mf F_c\rightarrow \mf F_c,~ f\mapsto  (-1)^{p(\ov{F_\alpha})}\frac{\partial}{\partial \ov{F_\alpha}}f.
	\end{align*}
 
Realizing in terms of differential operators, we can identify $\phi_0({\ov{F_\alpha}})$ and $\phi_0({\ov{E_\alpha}})$ with $\ell_{\ov{F_\alpha}}$  and $cD_{\ov{F_\alpha}}$, respectively. We refer to \cite[Proposition 5.3]{CW12} for more details. 
	 
Recall the Clifford superalgebra $\mc C_{\la}$  from Subsection \ref{sect::212}, for $\la \in \mf t^\ast$ with $\la(z)\neq 0$.	We can now identify the action of $\mf c$ on  $\mf F_c$ with the action of the tensor product $\mf{WC}(U)\otimes \mc C_{\la}$, that is, $\phi_0(U(\mf c))$ is generated by $\{\phi_0(\ov{F_\alpha}),~\phi_0(\ov{E_\alpha}),~ \phi_0(\ov{H_i})|~\alpha\in \Phi^+,~1\leq i \leq \ell \}$
 subject to the following defining relations:
\begin{equation}
\begin{aligned}
 	&[\phi_0(\ov{E_\alpha}), \phi_0(\ov{E_\beta})]= [\phi_0(\ov{F_\alpha}), \phi_0(\ov{F_\beta})] =  [\phi_0(\ov{E_\alpha}), \phi_0(\ov{H_i})] = [\phi_0(\ov{F_\alpha}), \phi_0(\ov{H_i})] =0, \\
 	&[\phi_0(\ov{E_\alpha}), \phi_0(\ov{F_\beta})] = (-1)^{p(E_\alpha)}c\delta_{\alpha,\beta}  \\
 	&[\phi_0(\ov{H_i}), \phi_0(\ov{H_j})] =c\delta_{i,j}, \label{eq::11}
 \end{aligned}\end{equation} for $\alpha, \beta \in \Phi^+$ and $1\leq i,j\leq \ell$.

The Fock space  $\mf F_c$ is a simple $\mf c$-module. Next, we are going to lift  $\mf F_c$  to  a $\g$-module.  We first define the following  dual bases  of $\mf s$ with respect to the bilinear  form $(\_|\_)$: \begin{equation}
\begin{aligned}
	&\{u^i\}_{i=1}^q := \{E_\alpha, F_\alpha |~\alpha\in \Phi^+\} \cup \{H_1,\ldots, H_\ell\},\\ 
		&\{u_j\}_{j=1}^q := \{F_\alpha, (-1)^{p(E_\alpha)}E_\alpha, |~\alpha\in \Phi^+\}\cup \{H_1,\ldots, H_\ell\}.  \label{eq::5::dual}
\end{aligned}
\end{equation}
Namely, we have $(u^i|u_j) =\delta_{ij}$, for $1\leq i,j\leq q$. Note that  $[s, u_i] =\sum_{j=1}^q(u^j|[s,u_i])u_j$, for any $s\in \mf s$.  We state this in a slightly more general form that we shall need later on in the following lemma.

\begin{lem} \label{lem::6::ext}
	Suppose that $S$ is a simple $\mf c$-module on which $z$ acts as a non-zero scalar  $c\in \C$.   Let $\phi: U(\mf c)\rightarrow \End_\C(S)$ be the corresponding  representation map. Then    $S$ lifts  to a simple $\g$-module by extending $\phi$  to $U(\mf g)$ by defining 
	\begin{align}
		&\phi(s) :=  \frac{1}{2c} \sum_{j=1}^{q}\phi(\ov{[s,u^j]})\phi( \ov{u_j}), \text{ for any }s\in \mf s. \label{eq::lem6::ext}
	\end{align}  
\end{lem}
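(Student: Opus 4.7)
My approach is to extend $\phi$ to $U(\g)$ via the given quadratic formula and verify that this yields a Lie superalgebra representation; simplicity then follows automatically. Since $\g = \mf s \oplus \mf c$ as a vector space with $\mf c$ an ideal and $z$ central, only two families of bracket relations require verification: (I) the mixed relation $[\phi(s), \phi(\overline{s'})] = \phi(\overline{[s, s']})$ for $s, s' \in \mf s$, and (II) the Sugawara relation $[\phi(s_1), \phi(s_2)] = \phi([s_1, s_2])$ for $s_1, s_2 \in \mf s$. Once (I) and (II) are established, simplicity of $S$ as a $\g$-module is immediate: any nonzero $\g$-submodule is \emph{a fortiori} a nonzero $\mf c$-submodule, hence all of $S$ by hypothesis.

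For (I), I would apply the super-Leibniz rule $[AB, C] = A[B, C] + (-1)^{|B||C|}[A, C] B$ to each summand $\phi(\overline{[s, u^j]})\phi(\overline{u_j})$ of the defining formula, using the scalar supercommutator $[\phi(\overline{a}), \phi(\overline{b})] = (-1)^{p(b)}(a|b)c\cdot\mathrm{id}_S$ that descends from $[\overline{a}, \overline{b}] = (-1)^{p(b)}(a|b)z$ in $\mf c$ together with $\phi(z) = c$. Summing over $j$ produces two contributions: the first is evaluated via the dual basis completeness $\sum_j (u_j|s')u^j = (-1)^{p(s')}s'$, while the second, after being rewritten using the invariance identity $([s,u^j]|s') = -(-1)^{p(s)p(u^j)}(u^j|[s,s'])$, is evaluated via the companion identity $\sum_j (u^j|[s,s'])u_j = [s,s']$. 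Careful sign bookkeeping shows that both contributions equal $\tfrac{1}{2}\phi(\overline{[s, s']})$, so their sum matches the right-hand side of (I) after dividing by $2c$.

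For (II), with (I) in hand, I would substitute the defining formula for $\phi(s_2)$ and move $\phi(s_1)$ across the two factors of each summand using (I), obtaining
\begin{equation*}
[\phi(s_1), \phi(s_2)] = \frac{1}{2c}\sum_j \Bigl(\phi(\overline{[s_1,[s_2,u^j]]})\phi(\overline{u_j}) + (-1)^{\ast}\phi(\overline{[s_2,u^j]})\phi(\overline{[s_1,u_j]})\Bigr)
\end{equation*}
for the parity sign $\ast = p(s_1)(p(s_2)+p(u^j)+1)$. Applying the super-Jacobi identity $[s_1,[s_2,u^j]] = [[s_1,s_2],u^j] + (-1)^{p(s_1)p(s_2)}[s_2,[s_1,u^j]]$, the contribution from $[[s_1,s_2],u^j]$ reassembles into $\phi([s_1,s_2])$ by the defining formula. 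The residual $[s_2,[s_1,u^j]]$-terms should cancel against the second sum; the key input is the diagonal $\mf s$-invariance of the Casimir tensor $\sum_j u^j\otimes u_j$, namely $\sum_j [s_1,u^j]\otimes u_j + (-1)^{p(s_1)p(u^j)}u^j\otimes[s_1,u_j] = 0$ in $\mf s\otimes\mf s$, which follows directly from the invariance of $(\_|\_)$. Expanding $[s_1, u^j]$ in the basis $\{u_k\}$ and invoking this identity delivers the required cancellation.

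The main technical obstacle is the meticulous sign tracking throughout, since the parity shift induced by $\overline{(\_)} = (\_)\otimes\theta$ interacts nontrivially with the supersymmetry of $(\_|\_)$ and the Koszul signs in the super-Leibniz rule. Apart from this bookkeeping, the verification is the standard super-Sugawara realization of $\mf s$ inside the algebra generated by the ``free field'' operators $\phi(\overline{s'})$, and it yields a well-defined homomorphism $U(\g)\to\End_\C(S)$ extending $\phi|_{U(\mf c)}$.
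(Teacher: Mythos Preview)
Your overall plan matches the paper's: verify the mixed relation (I) and the $\mf s$-relation (II), after which simplicity is immediate. Your treatment of (I) is essentially the paper's proof of its equation~\eqref{eq::5}. For (II) the paper does something different: it expands \emph{both} $\phi(s)$ and $\phi(t)$ as double sums (using the equivalent form $\phi(s)=\frac{1}{2c}\sum_{i,j}(-1)^{p_ip_j+p_j+1}(u^j|[s,u_i])\,\ov{u^i}\,\ov{u_j}$) and computes the commutator of the two quadratic expressions head-on, obtaining four terms that pair off. Your bootstrap via (I) and Jacobi is a cleaner alternative in principle.

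There is, however, a genuine error in your ``key input'' for the cancellation. The tensor $\sum_j u^j\otimes u_j$ is \emph{not} $\ad$-invariant in the super setting: for odd $s_1$ the displayed identity
\[
\sum_j [s_1,u^j]\otimes u_j + (-1)^{p(s_1)p(u^j)}u^j\otimes[s_1,u_j]=0
\]
fails (take $\mf s=\gl(1|1)$ and $s_1=e_{12}$; the left side equals $-2\bigl(e_{12}\otimes e_{11}+e_{11}\otimes e_{12}+e_{12}\otimes e_{22}+e_{22}\otimes e_{12}\bigr)\neq 0$). The genuinely invariant Casimir tensor is $\sum_j(-1)^{p_j}u^j\otimes u_j$, and the correct replacement identity is
\[
\sum_j [s_1,u^j]\otimes u_j = -\sum_j (-1)^{p(s_1)(p_j+1)}u^j\otimes[s_1,u_j],
\]
which one obtains by expanding $[s_1,u^j]$ in the $\{u^k\}$-basis, invoking invariance of $(\_|\_)$, and using the parity constraint $p_j=p(s_1)+p_k$ on nonvanishing terms. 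Plugging this into your residual expression, the extra factor $(-1)^{p(s_1)}$ is exactly what is needed to match the sign $(-1)^{p(s_1)(p(s_2)+p_j+1)}$ in your second sum, and the two sums then cancel for all parities. So your strategy for (II) is salvageable, but as written the crucial step is off by this sign and would not close for odd $s_1$.
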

\begin{proof} 
  The goal is to show that the following identities hold in $\End_\C(S)$:
\begin{align}
	&[\phi(s), \phi(\ov{u_k})] = \phi{[s,\ov{u_k}]},   \label{eq::5} \\
	&[\phi(s), \phi(t)] = \phi([s,t]),  \label{eq::6}
\end{align} for any homogenous elements $s,t\in \mf s$ and $1\leq k\leq q$. For simplicity of notations, we will write $x$ instead of $\phi(x)$ in the following calculations, for any $x\in \mf c$.   For each $1\leq i\leq q$, we set $p_i:=p(u^i)$. We note that $\phi(s)=\frac{1}{2c} \sum_{i,j=1}^{q} (-1)^{p_ip_j+p_j+1} (u^j| [s,u_i]) \ov {u^i}\, \ov{u_j}$ by definition.   To check the equation \eqref{eq::5}, we calculate the element $[\phi(s),\ov{u_k}]$ in $\End_\C(S)$: \begin{align*}
	&[\phi(s),\ov{u_k}]  = \frac{1}{2c}\sum_{i,j=1}^{q} (-1)^{p_ip_j+p_j+1}(u^j|[s,u_i])[\ov{u^i}\,\ov{u_j}, \ov{u_k}]  \\
	& = \frac{1}{2c}\sum_{i,j=1}^{q}(-1)^{p_ip_j+p_j+1}(-1)^{(p_j+1)(p_k+1)}(u^j|[s,u_i])\,[\ov{u^i},\ov{u_k}] \,\ov{u_j}\\ &+\frac{1}{2c}\sum_{i,j=1}^{q} (-1)^{p_ip_j+p_j+1} 
	(u^j|[s,u_i])\,\ov{u^i}\,[\ov{u_j}, \ov{u_k}] \\
	& = \frac{1}{2}\ov{[s,u_k]}+\frac{1}{2}(-1)^{p(s)p_k+1}\ov{[u_k, s]} 
	 = \ov{[s,u_k]}.  \end{align*}

Next, we prove the equation \eqref{eq::6}. First,  it follows by the definition of $\phi$ in \eqref{eq::lem6::ext} that   \begin{align*}
	&\phi([s,t]) = \frac{1}{2c}\sum_{j=1}^{q} \ov{[[s,t],u^j]}\,\ov{u_j}.  \end{align*}
We calculate the element $[\phi(s),\phi(t)]$ in $\End_\C(S)$:
\begin{align*}
	&4c^2 [\phi(s),\phi(t)] = \sum_{j,k=1}^{q} [\ov{[s,u^j]}\, \ov{u_j},\ov{[t,u^k]}\,\ov{u_k}] \\
	&= \sum_{j,k=1}^{q} (-1)^{(p_j+1)(p(t)+p_k+1)+p(s)+p_j} ([s,u^j]|[t,u^k])\ov{u_j}\,\ov{u_k}c \\
	&+ \sum_{j=1}^{q} (-1)^{p(s)p(t)+1} \ov{[t,[s,u^j]]}\, \ov{u_j}c\\
	&+\sum_{j,k=1}^{q} (-1)^{p_j} (u_j|[t,u^k]) \ov{[s,u^j]}\, \ov{u_k} c \\
	&+ \sum_{j,k=1}^{q} (-1)^{(p_j+p(s)+1)(p(t)+p_j+1)+(p(t)+p_k+1)(p_j+1)+p_j} (u_j|u_k) \ov{[t,u^k]}\,\ov{[s,u^j]}c \\
	& = \sum_{k=1}^{q} \ov{[s,[t,u^k]]}\,\ov{u_k}c +\sum_{j=1}^{q} (-1)^{p(s)p(t)+1} \ov{[t,[s,u^j]]}\,\ov{u_j} c \\
	& +\sum_{k=1}^{q}\ov{[s,[t,u^k]]}\ov{u_k}c + \sum_{k=1}^{q} (-1)^{p(s)p(t)+1} \ov{[t,[s,u^k]]}\,\ov{u_k} c \\
	& = 2c \sum_{k=1}^{q} \left(\ov{[s,[t,u^k]]} \,\ov{u_k}+(-1)^{p(s)p(t)+1} \ov{[t,[s,u^k]]}\,\ov{u_k}\right) \\
	&=2c\sum_{k=1}^{q} \ov{[[s,t],u^k]}\, \ov{u_k}.
\end{align*} Therefore we have $[\phi(s),\phi(t)] =\phi([s,t])$. This completes the proof. \end{proof}

 Let $\rho\in \mf h^\ast$ denote the Weyl vector of $\mf s$, that is, \[\rho:= \frac{1}{2}({\sum_{\alpha\in \Phi_\oa^+}\alpha}-\sum_{\beta\in \Phi_\ob^+}\beta).\]  
Let $\rho_c \in \mf t^\ast$ be determined by $\rho_c|_{\mf h} =\rho$ and $\rho_c(z) = c$. We have the following lemma.

\begin{prop} \label{prop::5} 
The	formula \eqref{eq::lem6::ext} extends  the $\mf c$-module  $\mf F_c$ to a simple highest weight $\g$-module of the highest weight $\rho_c$.
\end{prop}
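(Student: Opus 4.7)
The plan is to verify, in order, that the vector $|0\rangle$ is a highest weight vector of weight $\rho_c$, that the $\hat{\mf h}$-submodule it generates has the correct structure and is annihilated by $\hat{\mf n}$, and finally that $\mf F_c$ is simple. The heart of the argument is a direct calculation using the dual bases \eqref{eq::5::dual} and the Weyl--Clifford commutation relations \eqref{eq::11}.

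First, I would compute $\phi(h)|0\rangle$ for $h\in\mf h$. Among the terms in $\frac{1}{2c}\sum_j \ov{[h,u^j]}\,\ov{u_j}\,|0\rangle$, those with $u^j=F_\alpha$ drop out because $u_j$ lies in $\mf n$, so $\ov{u_j}\in\ov{\mf n}\subseteq I$ kills $|0\rangle$. The $u^j=H_i$ terms vanish since $[h,H_i]=0$. For $u^j=E_\alpha$ one uses $[\ov{E_\alpha},\ov{F_\alpha}]=(-1)^{p(E_\alpha)}c$ from \eqref{eq::11} to get $\ov{E_\alpha}\,\ov{F_\alpha}|0\rangle=(-1)^{p(E_\alpha)}c\,|0\rangle$, yielding
\begin{equation*}
\phi(h)|0\rangle=\tfrac{1}{2}\sum_{\alpha\in\Phi^+}(-1)^{p(E_\alpha)}\alpha(h)\,|0\rangle = \rho(h)|0\rangle,
\end{equation*}
since $(-1)^{p(E_\alpha)}\alpha$ summed over $\Phi^+$ produces $2\rho$. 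Combined with $z\cdot|0\rangle=c|0\rangle$, this shows $|0\rangle$ has weight $\rho_c$.

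Next, I would verify $\phi(E_\gamma)|0\rangle=0$ for every $\gamma\in\Phi^+$ by the same kind of case analysis. Terms with $u^j=F_\alpha$ again vanish. For $u^j=E_\alpha$, the contribution is $c_{\gamma,\alpha}\,\ov{E_{\gamma+\alpha}}\,\ov{F_\alpha}|0\rangle$; since $\gamma+\alpha\neq\alpha$, the commutator $[\ov{E_{\gamma+\alpha}},\ov{F_\alpha}]$ is proportional to $(E_{\gamma+\alpha}|F_\alpha)z=0$, and then $\ov{E_{\gamma+\alpha}}\in I$ annihilates $|0\rangle$. For $u^j=H_i$, the term is $-\gamma(H_i)\,\ov{E_\gamma}\,\ov{H_i}|0\rangle$; since $[\ov{E_\gamma},\ov{H_i}]=(E_\gamma|H_i)z=0$, one commutes and uses $\ov{E_\gamma}|0\rangle=0$. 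Thus $\phi(\mf n)|0\rangle=0$.

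For Step 3, let $I_0:=I\cap\ov{\mf h}$; since $\ov{\mf n}\subseteq I$ and $\ov{\mf n}^\perp=\ov{\mf n}+\ov{\mf h}$ in $\ov{\mf s}$, the space $I_0$ is a maximal isotropic subspace of $\ov{\mf h}$. The $\hat{\mf h}$-submodule $W:=U(\hat{\mf h})|0\rangle$ is therefore isomorphic to $W_{\rho_c}$ as constructed in Subsection \ref{sect::212}. To show $\hat{\mf n}\cdot W=0$, one combines $\phi(\hat{\mf n})|0\rangle=0$ (from the previous step together with $\ov{\mf n}\subseteq I$) with the fact that $[\hat{\mf h},\hat{\mf n}]\subseteq\hat{\mf n}$ (a direct check using $\theta^2=0$ and $(\mf n|\mf h)=(\mf n|\mf n)=0$), and then pushes $\phi(\hat{\mf n})$ past $U(\hat{\mf h})$ by iterated commutation. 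Together with the fact that $|0\rangle$ already generates $\mf F_c$ as a $\mf c$-module, this shows $\mf F_c$ is a highest weight $\g$-module of weight $\rho_c$.

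Finally, for simplicity: as a $\mf c$-module, $\mf F_c$ is the standard Fock representation of the Weyl--Clifford superalgebra $\mf{WC}(U)\otimes\mc{C}_{\rho_c}$ (the quotient of $U(\mf c)$ by the defining relations), which is simple. Since any $\g$-submodule is automatically a $\mf c$-submodule, $\mf F_c$ is simple as a $\g$-module. The main obstacle is the bookkeeping in Steps 1 and 2: tracking the parity signs appearing in $\{u^i\},\{u_j\}$ and correctly applying the commutation relations \eqref{eq::11} to reduce each family of terms to zero (or to the expected $\rho(h)$); once those cancellations are carried out, Steps 3 and 4 follow formally.
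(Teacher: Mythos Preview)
Your proposal is correct and follows essentially the same approach as the paper's proof: both compute $\phi(h)|0\rangle=\rho(h)|0\rangle$ and $\phi(E_\gamma)|0\rangle=0$ by the same case analysis on the dual bases \eqref{eq::5::dual}, and both deduce simplicity from the simplicity of $\mf F_c$ as a $\mf c$-module (the paper via Lemma \ref{lem::6::ext}, you directly). Your Step~3, which verifies explicitly that $U(\hat{\mf h})|0\rangle\cong W_{\rho_c}$ and $\hat{\mf n}\cdot W=0$, supplies detail that the paper leaves implicit in its final identification $\mf F_c\cong L(\rho_c)$.
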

\begin{proof} 
 
 By Lemma \ref{lem::6::ext}, $\mf F_c$ lifts to a simple $\g$-module. We shall show that $H|0\rangle = \rho(H)|0\rangle$, for each $H\in \mf h$. To see this, recall from \eqref{eq::5::dual} that we have specified the dual bases $\{u^i\}_{i=1}^q$ and $\{u_j\}_{j=1}^q$ of $\mf s$ with respect to the bilinear form $(\_|\_)$ and root vectors $E_\alpha\in \mf s^\alpha$ and $F_\alpha \in \mf s^{-\alpha}$, for each positive root $\alpha\in \Phi^+$. 
We calculate
\begin{align*}
H |0\rangle &= \frac{1}{2c} \sum_{j=1}^{q}\ov{[H,u^j]} \,\ov{u_j}  |0\rangle \\
&= \frac{1}{2c} \left(\sum_{\alpha \in \Phi^+} \alpha(H)\ov{E_\alpha}\,\ov{F_\alpha}+\sum_{\alpha \in \Phi^+} (-1)^{1+p(E_\alpha)} \alpha(H)\ov{F_\alpha}\,\ov{E_\alpha}\right)  |0\rangle\\
&= \frac{1}{2c} \sum_{\alpha \in \Phi^+} \alpha(H)\ov{E_\alpha}\,\ov{F_\alpha}   |0\rangle \\
&= \frac{1}{2c} \sum_{\alpha \in \Phi^+} \alpha(H)\left((-1)^{p(E_\alpha)}z+(-1)^{p(E_\alpha)}\ov{F_\alpha}\,\ov{E_\alpha}\right)  |0\rangle \\
&= \rho(H)   |0\rangle 
\end{align*}  Also, by a direct calculation, we find that $E_\beta|0\rangle =0$, for any $\beta\in \Phi^+$.  This shows that  $\mf F_c\cong L(\rho_c)$. 
\end{proof}

Recall from Subsection \ref{sect::314} that $L_{\mf s}(\mu)$ and $M_{\mf s}(\mu)$ respectively denote  simple and Verma modules in $\mc O^0$, lifted from the corresponding highest weight $\mf s$-modules, for $\mu \in \h^\ast$. The following is an immediate consequence of Propositions \ref{prop::4} and \ref{prop::5}.  
\begin{cor} \label{cor::11}
Let $\la \in \mf t^\ast$ be such that $\la(z)=c\neq 0$. Then we have 
\begin{align*}
&L(\la)\cong \mf F_c\otimes L_{\mf s}(\la|_{\mf h} -\rho).
\end{align*}
\end{cor}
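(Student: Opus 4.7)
The plan is to apply Proposition~\ref{prop::4} with $S = \mf F_c$ and $L = \iota_{\mf s}(L_{\mf s}(\mu))$, where $\mu := \la|_{\mf h} - \rho$, and then identify the highest weight of the resulting simple $\g$-module with $\la$.

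First I verify the hypothesis of Proposition~\ref{prop::4} for $\mf F_c$. The key point is that the representation map $\phi : U(\g) \to \End_{\C}(\mf F_c)$ is constructed in Lemma~\ref{lem::6::ext} by \emph{defining} $\phi(s)$ for $s \in \mf s$ via the quadratic formula~\eqref{eq::lem6::ext}, which is manifestly a polynomial in elements coming from $U(\mf c)$. Consequently $\phi(U(\g)) = \phi_0(U(\mf c))$, and Proposition~\ref{prop::4} applies (note that $\mf c$ acts trivially on $L_{\mf s}(\mu)$ by construction of $\iota_{\mf s}$). Thus $L_{\mf s}(\mu) \otimes \mf F_c$ is a simple $\g$-module.

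Next I identify the highest weight. Let $v_\mu$ be a highest weight vector of $L_{\mf s}(\mu)$ and consider the vector $v_\mu \otimes |0\rangle$. Using $H v_\mu = \mu(H) v_\mu$, the computation $H|0\rangle = \rho(H)|0\rangle$ from Proposition~\ref{prop::5}, and the fact that $\mf c$ acts trivially on $L_{\mf s}(\mu)$ while $z|0\rangle = c|0\rangle$, one immediately reads off
\[
H(v_\mu \otimes |0\rangle) = (\mu + \rho)(H)(v_\mu \otimes |0\rangle), \qquad z(v_\mu \otimes |0\rangle) = c(v_\mu \otimes |0\rangle),
\]
so that $v_\mu \otimes |0\rangle$ has $\mf t$-weight $\la$. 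Moreover, for any $\beta \in \Phi^+$, we have $E_\beta v_\mu = 0$ and $E_\beta|0\rangle = 0$ (again by Proposition~\ref{prop::5}), while $\ov{E_\beta} \in \ov{\mf n} \subseteq I$ annihilates $|0\rangle$ and acts as zero on $L_{\mf s}(\mu)$. Hence $\hat{\mf n}(v_\mu \otimes |0\rangle) = 0$.

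Finally I invoke the classification. Both tensor factors belong to category $\mc O$ (with $L_{\mf s}(\mu) \in \mc O^0$ via $\iota_{\mf s}$ and $\mf F_c \cong L(\rho_c) \in \mc O^c$ by Proposition~\ref{prop::5}), so $L_{\mf s}(\mu) \otimes \mf F_c$ is semisimple over $\mf t$ with $\hat{\mf n}$ acting locally nilpotently, and lies in $\mc O^c$. Since it is simple and admits a non-zero vector $v_\mu \otimes |0\rangle$ killed by $\hat{\mf n}$ of $\mf t$-weight $\la$, the classification of simple objects in $\mc O$ forces $L_{\mf s}(\mu) \otimes \mf F_c \cong L(\la)$. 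The only subtle point is to confirm that $\la|_{\mf h} = \mu + \rho$ is genuinely the maximal $\mf h$-weight appearing in the tensor product (so that $v_\mu \otimes |0\rangle$ really generates the top $\hat{\mf h}$-submodule isomorphic to $W_\la$), but this is automatic: the $\mf h$-weights of $\mf F_c$ all lie in $\rho - \Z_{\geq 0}\Phi^+$ because $\mf F_c$ is generated over $\mf c$ by $|0\rangle$ from below, and similarly for $L_{\mf s}(\mu)$, so their weights add to give a unique top weight $\mu + \rho$.
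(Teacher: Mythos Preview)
Your proof is correct and follows exactly the route the paper intends: it invokes Proposition~\ref{prop::4} (with the hypothesis verified via the quadratic formula~\eqref{eq::lem6::ext}) to get simplicity of the tensor product, and Proposition~\ref{prop::5} to pin down the highest weight as $\rho_c$, then matches the result with $L(\la)$ by the classification of simples in~$\mc O$. The paper records this corollary as ``an immediate consequence of Propositions~\ref{prop::4} and~\ref{prop::5}''; you have simply written out the details that justify this claim.
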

\begin{rem} 
	Since the dimension of $W_\la$ is $2^{\lfloor \frac{\dim \h+1}{2} \rfloor}$, we have the following character formula 
	\begin{align}
	&\ch L(\la) =2^{\lfloor \frac{\dim  \h+1}{2} \rfloor} \frac{\prod_{\alpha\in \Phi_\ob^+} (1+e^{-\beta})}{\prod_{\beta\in \Phi_\oa^+} (1-e^{-\alpha})} \ch L_s(\la|_{\mf h}-\rho). \label{eq::ch7}
	\end{align}
In particular, in the case when $\mf s$ is of type $\mf{gl}$ or $\mf{osp}$, the formula in  \eqref{eq::ch7} provides a complete solution to the irreducible character problem of $\g$ in terms of the Kazhdan–Lusztig combinatorics for basic classical Lie superalgebras $\mf s$ 
established in  \cite{Br03, Ba17, BW18, CLW11, CLW15}.
\end{rem}

For each $\gamma \in \mf h^\ast$, we   view it as an element of $\mf t^\ast$ by setting $\gamma(z)=0$. The following corollary gives a description of the composition multiplicities of  Verma modules. 
\begin{cor} Suppose that $c\neq 0$. Then the full subcategory $\mc O^c$ is artinian.  Furthermore, for any $\la\in \mf t^\ast$ be such that $\la(z)=c$, we have \begin{align} &\ch M(\la) 
		 =   \sum_{\mu\in \mf t^\ast}[M_{\mf s}(\la|_\h-\rho): L_{\mf s}(\mu|_{\h}-\rho)] \ch L(\mu). \label{eq::8} \end{align}   
\end{cor}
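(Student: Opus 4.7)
The proof strategy is to establish the $\g$-module identification
\begin{equation*}
M(\la) \;\cong\; \mf F_c\otimes M_{\mf s}(\la|_{\mf h}-\rho),
\end{equation*}
which is the Verma-level analogue of the simple-module isomorphism in Corollary \ref{cor::11}. Once this is known, the classical finite length of $M_{\mf s}(\la|_{\mf h}-\rho)$ in the BGG category $\mc O(\mf s)$ supplies a composition series with multiplicities $[M_{\mf s}(\la|_{\mf h}-\rho):L_{\mf s}(\mu|_{\mf h}-\rho)]$. Applying the exact tensor functor $\mf F_c\otimes(-)$ produces a finite $\g$-filtration of $M(\la)$ whose successive quotients are $\mf F_c\otimes L_{\mf s}(\mu|_{\mf h}-\rho)\cong L(\mu)$ by Corollary \ref{cor::11}. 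This simultaneously yields the character identity \eqref{eq::8} and shows that every Verma $M(\la)$ has finite length; combined with Lemma \ref{lem::33}, which filters an arbitrary object of $\mc O^c$ by highest-weight modules (each a quotient of some $M(\la)$), every module in $\mc O^c$ is of finite length, so $\mc O^c$ is artinian.

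To establish the isomorphism, set $N:=\mf F_c\otimes M_{\mf s}(\la|_{\mf h}-\rho)$ and let $v$ be a highest-weight generator of $M_{\mf s}(\la|_{\mf h}-\rho)$. First I would verify that $|0\rangle\otimes v$ is annihilated by $\hat{\mf n}=\mf n\oplus\ov{\mf n}$: indeed $\mf n\cdot v=0=\ov{\mf n}\cdot v$ (the second equality because $\mf c$ acts trivially on the second factor), while $\ov{\mf n}\cdot|0\rangle=0$ holds by the choice $\ov{\mf n}\subseteq I$, and $\mf n\cdot|0\rangle=0$ is the computation appearing in the proof of Proposition \ref{prop::5}. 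The $\mf t$-weight of $|0\rangle\otimes v$ is $\la$, and the $\hat{\mf h}$-submodule it generates inside $N^{\hat{\mf n}}$ is isomorphic to $W_\la$: the action of $\ov{\mf h}$ on $|0\rangle$ reproduces the Clifford module $W_{\rho_c}$, while tensoring with $v$ shifts the $\mf h$-weight by $\la|_{\mf h}-\rho$ without altering the Clifford relations (since $\la(z)=c$). Frobenius reciprocity then produces a $\g$-homomorphism $\psi\colon M(\la)\to N$.

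It remains to show that $\psi$ is an isomorphism. For surjectivity, I would argue $U(\g)\cdot(|0\rangle\otimes v)=N$ in two passes: because $\ov{\mf n}^-$ and a complementary odd Cartan subspace $\ov{\mf h}''\subset\ov{\mf h}$ both act trivially on the second factor, the PBW description of $\mf F_c=U(\mf c/(I+\C z))|0\rangle$ yields $\mf F_c\otimes v\subseteq U(\g)\cdot(|0\rangle\otimes v)$; then for each $F_\alpha\in\mf n^-$ the Leibniz rule
\begin{equation*}
F_\alpha(f\otimes v)=(F_\alpha f)\otimes v+(-1)^{p(F_\alpha)p(f)}f\otimes F_\alpha v
\end{equation*}
places both the left-hand side and the first summand inside the submodule, so $f\otimes F_\alpha v$ lies there as well; iteration along a PBW basis of $U(\mf n^-)v=M_{\mf s}(\la|_{\mf h}-\rho)$ exhausts $N$. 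A direct PBW character count then gives $\ch M(\la)=\ch\mf F_c\cdot\ch M_{\mf s}(\la|_{\mf h}-\rho)=\ch N$, and since $\psi$ is surjective between modules with equal finite-dimensional weight spaces, it is an isomorphism. The main technical delicacy I foresee is the identification of the $\hat{\mf h}$-span of $|0\rangle\otimes v$ in $N^{\hat{\mf n}}$ with $W_\la$, which requires careful tracking of the Clifford action of $\ov{\mf h}$ on $|0\rangle$ together with the weight-shift coming from $v$; once that is settled, surjectivity combined with the character match forces injectivity essentially for free.
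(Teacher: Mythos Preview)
Your proposal is correct. The paper, however, takes a shorter and purely character-theoretic route: it simply asserts the equality $\ch M(\la)=\ch\mf F_c\cdot\ch M_{\mf s}(\la|_{\mf h}-\rho)$ as a direct PBW computation (without ever constructing the $\g$-module isomorphism $M(\la)\cong\mf F_c\otimes M_{\mf s}(\la|_{\mf h}-\rho)$), expands the $\mf s$-Verma character as a finite sum of simple $\mf s$-characters, and then applies Corollary~\ref{cor::11} only at the level of characters to obtain \eqref{eq::8}; finite length of $M(\la)$ is read off from the fact that $\ch M(\la)$ is a finite nonnegative combination of linearly independent simple characters, and Lemma~\ref{lem::33} closes the argument exactly as in your outline. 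Your approach is more work but buys a genuinely stronger statement---the module isomorphism and hence an explicit composition series of $M(\la)$---so finite length becomes immediate rather than deduced from linear independence of characters. Note that your injectivity step (the character match for $\psi$) already invokes the same PBW identity the paper uses as its starting point, so the two arguments share that core computation; the difference is that you lift everything to the module level while the paper stays with characters throughout.
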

\begin{proof}
	Let $\la(z)=c \neq 0$. 
	We calculate 
 \begin{align*}
 \ch M(\la) &=\ch \mf F_c\cdot \ch M_{\mf s}(\la|_{\mf h}-\rho)    = \ch \mf F_c \cdot  \sum_{\nu\in \mf h^\ast}[M_{\mf s}(\la|_\h-\rho): L_{\mf s}(\nu)] \ch L_{\mf s}(\nu)\\
 &=\sum_{\nu\in \mf h^\ast}[M_{\mf s}(\la|_\h-\rho): L_{\mf s}(\nu)] \ch L(\nu+\rho_c).
 \end{align*}  Thanks to Corollary \ref{cor::11}, we have established the formula \eqref{eq::8}, which implies that $M(\la)$ has finite length. The conclusion that $\mc O^c$ is artinian follows by Lemma \ref{lem::33}. \end{proof}

\subsection{Whittaker modules} \label{sect::WhMod}
\subsubsection{Whittaker categories} \label{sect::221} Let $\g$-Wmod denote the category of  finitely-generated $\g$-modules $M$ such that $M$ is locally $U(\hat{\mf n})$-finite and the central element $z$ acts on $M$ as a non-zero scalar multiple of the identity. We call modules in $\g$-Wmod {\em Whittaker modules (at non-critical levels)} by analogy with the Whittaker modules of classical Lie (super)algebras (see, e.g., \cite{BCW14,C21}).

 To explain  further, we need several preparatory results. Since the bilinear form $(\_|\_)$ is invariant, it follows that if  $x\in \mf s_\oa$  such that its adjoint action $\ad x:\mf s_\oa\rightarrow\mf s_\oa$ is nilpotent then  the inner automorphism $\text{exp}(\ad x) = \sum_{k=0}^\infty (\ad x)^k/k!$ of $\mf s_\oa$ extends to an automorphism of $\g$.   Let $G$ be the subgroup of $\text{Aut}(\g)$ generated by all such inner automorphisms.  Since Borel subalgebras of reductive Lie algebras are conjugate under $G$, we conclude that $\g$-Wmod is independent of the choice of the triangular decomposition given in \eqref{eq::tri0}, up to equivalence.  

Let $c\in \C$ be a non-zero complex number. For any character  $\chi: \hat{\mf n}_\oa\rightarrow \C$, let $\g$-Wmod$^\chi_c$ denote the full subcategory of $\g$-Wmod consisting of objects $M$ such that $(z-c)M=0$ and $x-\chi(x)$ acts locally nilpotently on $M$, for all $x\in\hat{\mf n}_\oa.$  When the scalar $c$ is clear from the context, we say a $\g$-module is a {\em Whittaker module associated to $\chi$} if it lies in $\g$-Wmod$^\chi_c$.  
 Since the adjoint action of $\hat{\mf n}_\oa$ on $\g$ is nilpotent, we observe that $\g$-Wmod decomposes  into a direct sum  $\g$-Wmod$= \bigoplus_{\chi,c}\g$-Wmod$^\chi_c$, where $\chi$ and $c$ run over the characters of $\hat{\mf n}_\oa$ and non-zero complex numbers, respectively.  
 
  Similarly, we denote by $\mf s$-Wmod the category of finitely-generated $\mf s$-modules that are locally finite over $U(\mf n)$.  For an arbitrary character $\zeta$ of $\mf n_\oa$, we let  $\mf s$-Wmod$^{\zeta}$ be the full subcategory of $\mf s$-Wmod consisting of objects on which $x-\zeta(x)$ acts locally nilpotently, for any $x\in \mf n_{\bar{0}}$.

\subsubsection{Locally $U(\ov{\mf n})$-finite simple $\mf c$-modules} Recall the representation map $\phi_0: U(\mf c)\rightarrow \End_\C(\mf F_c)$ from Subsection \ref{sect::Fock}. 
For any character  $\eta: \ov{\mf n}_\oa\rightarrow \C$, we may extend it to a character of $\ov{\mf n}$ by letting $\eta(\ov{\mf n}_\ob)=0$. We can define an even linear map  $\phi_\eta: \mf c\rightarrow \End_\C(\mf F_c)$  by   
  \begin{align*}\phi_\eta(x):= \left\{\begin{array}{ll}	\phi_0(x) +\eta(x), &  \text{~if $x\in \ov{\mf n}$};\\ 		\phi_0(x), & \text{~if $x\in \ov{\mf n^-}+\ov{\mf h}+\C z$}. 
  	\end{array} \right. \end{align*}
Since elements in  $\{\phi_\eta(\ov{F_\alpha}),~\phi_\eta(\ov{E_\alpha}),~ \phi_\eta(\ov{H_i})|~\alpha\in \Phi^+,~1\leq i \leq \ell \}$ satisfy the defining relations \eqref{eq::11}, $\phi_\eta$ induces a homomorphism $\phi_\eta: U(\mf c)\rightarrow \End_\C(\mf F_c)$. We shall denote this new $\mf c$-module by  $\mf F_c^{\eta}$, which has the same underlying space as $\mf F_c$. The following lemma will be useful.
\begin{lem} \label{lem::11}
 Suppose that $V$ is a simple $\mf c$-module on which $z$ as the non-zero scalar $c\in \C$ and  $x-\eta(x)$ acts nilpotently, for any $x\in \ov{\mf n}$. Then $V\cong \mf F_c^\eta$.
\end{lem}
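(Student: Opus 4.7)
The plan is to construct a nonzero $\mf c$-module homomorphism $\mf F_c^\eta \to V$ and deduce the isomorphism from simplicity of both sides. First, $\mf F_c^\eta$ is itself simple: any $\mf c$-submodule $U \subseteq \mf F_c^\eta$ is stable under $\phi_\eta(\mf c)$, and because $\phi_\eta$ differs from $\phi_0$ only by the scalar operator $\eta(x)\,\mathrm{id}$ on $\ov{\mf n}$, such a $U$ is automatically $\phi_0(\mf c)$-stable, hence a submodule of the simple Fock module $\mf F_c$, and therefore either trivial or all of $\mf F_c^\eta$. Next, since $\alpha_D$ pairs $\ov{\mf n}$ nondegenerately with $\ov{\mf n^-}$ but trivially against $\ov{\mf n} + \ov{\mf h}$, any maximal isotropic $I \supseteq \ov{\mf n}$ must decompose as $I = \ov{\mf n} \oplus I'$ with $I' := I \cap \ov{\mf h}$ a maximal isotropic of $\ov{\mf h}$. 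The generator $|0\rangle \in \mf F_c^\eta$ is then characterized by $(x - \eta(x))|0\rangle = 0$ for every $x \in I$ (with $\eta$ extended by $0$ to $\ov{\mf n}_\ob$ and to $I'$) together with $z|0\rangle = c|0\rangle$, so it suffices to produce a nonzero $v_0 \in V$ satisfying these same relations and invoke the universal property of the induced module $\mf F_c^\eta \cong U(\mf c) \otimes_{U(I + \C z)} \C|0\rangle$.

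The existence of $v_0$ is the heart of the proof, and proceeds by two nested iterated-kernel arguments. Because $I$ is isotropic, $[I, I] = 0$ in $\mf c$, so all elements of $I$ super-commute as operators on $V$. I would first treat the super-commuting family $\{x - \eta(x) : x \in \ov{\mf n}\}$: the even members act locally nilpotently by hypothesis, while the odd members $\ov{y}$ (for $y \in \mf n_\oa$) satisfy $\ov{y}^2 = (y|y)c/2 = 0$ since any two positive root vectors of $\mf s$ pair trivially under $(\_|\_)$, and since $\eta$ vanishes on $\ov{\mf n}_\ob$. Picking a basis and iterating---using that a locally nilpotent or square-zero operator has nonzero kernel on any nonzero invariant subspace, and that super-commuting operators preserve one another's kernels---yields a nonzero common $\eta$-eigenspace $V^{\ov{\mf n},\eta}$. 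Second, elements of $I'$, being in $\ov{\mf h}$, are odd in $\mf c$; they super-commute with $\ov{\mf n}$ (as $[I,I]=0$) and hence preserve $V^{\ov{\mf n},\eta}$. Each such element $\ov{H}$ satisfies $\ov{H}^2 = (H|H)c/2 = 0$, because isotropy of $I'$ forces $(H|H) = 0$. Running the kernel iteration once more on this finite anticommuting family of square-zero operators produces a nonzero $v_0 \in V^{\ov{\mf n},\eta}$ with $I' v_0 = 0$, as required.

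With $v_0$ in hand, the universal property of the induced module gives a unique $\mf c$-homomorphism $\Phi : \mf F_c^\eta \to V$ with $\Phi(|0\rangle) = v_0$; since $\Phi$ is nonzero, the simplicity of $V$ forces surjectivity, and the simplicity of $\mf F_c^\eta$ forces injectivity, so $\Phi$ is an isomorphism. The main obstacle I expect to require care is the iterated-kernel step, and specifically the bookkeeping around the mixture of locally nilpotent even operators and square-zero odd operators: one must verify at each stage, using the correct parity sign in super-commutativity, that the next operator preserves the common kernel already constructed and still has nontrivial kernel on it, so that the process does not collapse before exhausting a basis of $I$.
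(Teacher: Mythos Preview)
Your proof is correct. Both arguments begin by locating a nonzero $\eta$-eigenvector for $\ov{\mf n}$ in $V$; the paper simply asserts the existence of such a vector, while you supply the iterated-kernel justification that makes this step rigorous. From there the routes diverge. You refine the eigenvector further to kill the isotropic piece $I' \subseteq \ov{\mf h}$ and then invoke the universal property of $\mf F_c^\eta \cong \Ind_{I+\C z}^{\mf c}\C_\eta$ directly. The paper instead passes to the larger subalgebra $\mf d := \ov{\mf n}_\oa + \mf c_\ob + \C z$, notes that $U(\mf d)v = U(\mf c_\ob)v$ is finite-dimensional (since $\mf c_\ob = \ov{\mf s_\oa}$ generates a Clifford algebra over the center), takes a simple $\mf d$-submodule $W$, and checks by direct calculation that $\Ind_{\mf d}^{\mf c} W \cong \mf F_c^\eta$; Frobenius reciprocity then yields the map to $V$. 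Your approach is more elementary---it avoids the Clifford-uniqueness step and is entirely self-contained---at the price of the extra square-zero bookkeeping for $I'$ that you rightly flag. The paper's approach is terser but hides the $\ov{\mf h}$-analysis inside the phrase ``by a straightforward calculation.''
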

\begin{proof}
Define the subalgebra $\mf d :=\ov{\mf n}_\oa+\mf c_\ob+\C z $ of $\mf c$. Suppose that $W$ is a finite-dimensional simple $\mf d$-module  such that $(z-c) W=0$. By a straightforward calculation, it follows  that the induced module $\Ind_{\mf d}^{\mf c}W$ is a simple $\mf c$-module isomorphic to $\mf F_c^\eta$.  In particular, if we let $v\in V$ be a non-zero vector such that $xv =\eta(x)v$, for any $x\in \ov{\mf n}$, and let $W$ be a simple submodule of the $\mf d$-module $U(\mf d)v=U(\mf c_\ob)v$, then there is an isomorphism from $\Ind_{\mf d}^{\mf c} W$ to $V$ by adjunction. The conclusion follows.
\end{proof}

\subsection{Whittaker modules induced by $\mf F_c$} \label{sect::223} From now on, we fix  a non-zero complex number $c\neq 0$. 
Recall  the root vectors $E_\alpha, F_\alpha ~(\alpha\in \Phi^+)$ and dual bases $\{u^i\}_{i=1}^q$ and $\{u_j\}_{j=1}^q$ with respect to the form $(\_|\_)$ of $\mf s$ from \eqref{eq::5::dual}. For any character $\eta: \ov{\mf n}_\oa\rightarrow\C$, recall the representation map $\phi_\eta: U(\mf c)\rightarrow \End(\mf F_c^\eta)$. We can lift $\mf  F_c^{\eta}$ to an irreducible $\g$-module via the following action of $\mf s$ by Lemma \ref{lem::6::ext}:
\begin{align*}
&\phi_\eta(s):=\frac{1}{2c} \sum_{j=1}^q \phi_\eta(\ov{[s, u^j]}) \phi_\eta(\ov{u_j}), \text{ for  } s\in \mf s.
\end{align*} For any $\alpha\in \Phi$, we set $\mf n^\alpha:=\mf s^{\alpha}\cap \mf n$.  The following lemma shows that  $\mf F_c^\eta$ is  a   Whittaker module. 

\begin{lem}\label{prop::10} Let $\eta: \ov{\mf n}_\oa\rightarrow \C$  be a character such that $\eta({\ov{\mf n^{\alpha}}})=0$, for any $\alpha\in \Phi^+_\ob\backslash \Delta_\ob$. Let $\hat \eta$ be the character of  $\hat{\mf n}_\oa$ determined by $\hat \eta|_{\ov{\mf n}_\oa} =\eta$ and  the following formula 
\begin{align*}\hat \eta(X):= \left\{\begin{array}{ll} \frac{(\alpha_1|\alpha_2)}{c}\eta(\ov{X_1})\eta(\ov{X_2}), &  \text{if  $X=[X_1,X_2]$ for some $X_i\in \mf n^{\alpha_i}$ with $\alpha_i\in \Delta_\ob$};\\ 	0, & \text{otherwise}, 
		\end{array} \right. \end{align*} 
for any root vector $X\in \mf n_\oa$.	Then we have    $\mf F_c^\eta\in \g$-\emph{Wmod}$_c^{\hat \eta}$. 
\end{lem}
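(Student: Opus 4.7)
The plan is to verify directly the two defining conditions of $\g$-Wmod$^{\hat\eta}_c$ on $\mf F_c^\eta$. The condition $(z-c)\mf F_c^\eta = 0$ is immediate, since $z$ acts as the scalar $c$ on the Fock space by construction. The heart of the argument is to show that $\phi_\eta(x) - \hat\eta(x)$ acts locally nilpotently on $\mf F_c^\eta$ for every $x \in \hat{\mf n}_\oa = \mf n_\oa \oplus \ov{\mf n}_\oa$. The organizing observation is that $\mf F_c^\eta$ and $\mf F_c \cong L(\rho_c)$ share the same underlying vector space, and so the $\phi_0(\mf h)$-weights of $\mf F_c^\eta$ lie in $\rho - \N\Phi^+$ and are bounded above by $\rho$. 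Thus it suffices to exhibit $\phi_\eta(x) - \hat\eta(x)$ as a finite sum of operators of strictly positive $\phi_0(\mf h)$-weight, from which local nilpotency follows by a standard height argument.

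For $x = \ov{y}$ with $y = E_\beta$ a positive odd root vector (so $x \in \ov{\mf n}_\oa$ and $\hat\eta(x) = \eta(x)$), direct inspection yields $\phi_\eta(\ov{y}) - \eta(\ov{y}) = \phi_0(\ov{y}) = c\,D_{\ov{F_\beta}}$. This operator has positive weight $\beta$ and strictly lowers the polynomial degree in $\mc S(\ov{\mf n}^-)$, so it is trivially locally nilpotent.

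The substantial case is $X \in \mf n^\alpha \subseteq \mf n_\oa$. I will apply the quadratic formula from Lemma \ref{lem::6::ext} and expand $\phi_\eta(\ov{w}) = \phi_0(\ov{w}) + \eta(\ov{w})$ in both factors to obtain
\[\phi_\eta(X) = \phi_0(X) + A_1 + A_2 + C_X,\]
where $A_1, A_2$ are the two mixed cross-terms (one factor $\phi_0$, one factor $\eta$) and $C_X$ is the scalar term obtained from both factors of $\eta$. The crucial step is the identification $C_X = \hat\eta(X)$. Since $\eta$ is supported on $\{\ov{E_\delta} : \delta \in \Delta_\ob\}$, non-vanishing summands in $C_X$ force $u^j = F_\delta$ (paired with $u_j = -E_\delta$) for $\delta \in \Delta_\ob$, and also $[X, F_\delta]$ must be a scalar multiple of $E_\gamma$ for some $\gamma \in \Delta_\ob$; in particular $\alpha$ must decompose as $\alpha_1 + \alpha_2$ with $\alpha_i \in \Delta_\ob$. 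Applying the super-Jacobi identity together with $[E_\mu, F_\mu] = h_\mu \in \mf h$ (the element dual to $\mu$ under $(\_|\_)$), one finds $[X, F_{\alpha_i}] = -(\alpha_1|\alpha_2)\,E_{\alpha_j}$ for $X = [E_{\alpha_1}, E_{\alpha_2}]$ and $\{i,j\} = \{1,2\}$; substituting back into the two summands $\delta = \alpha_1, \alpha_2$ yields $C_X = \tfrac{(\alpha_1|\alpha_2)}{c}\eta(\ov{E_{\alpha_1}})\eta(\ov{E_{\alpha_2}}) = \hat\eta(X)$, while both sides vanish when $\alpha$ admits no such decomposition.

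To finish, I need to verify that each of $\phi_0(X), A_1, A_2$ has strictly positive $\phi_0(\mf h)$-operator weight. The operator $\phi_0(X)$ has weight $\alpha \in \Phi^+_\oa$. In $A_1$, a non-vanishing summand forces $u^j = F_\beta$ with $\beta = \alpha - \gamma \in \Phi^+$ for some $\gamma \in \Delta_\ob$, so the operator factor $\phi_0(\ov{u_j})$ is proportional to $\phi_0(\ov{E_\beta})$ of positive weight $\beta$. In $A_2$, the factor $\phi_0(\ov{[X, F_\delta]})$ is nonzero only when $\alpha - \delta$ is a root, and since $\delta$ is simple it cannot be a negative root (that would express the simple root $\delta$ as a sum of two positive roots), so the weight $\alpha - \delta$ is necessarily positive. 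Therefore $\phi_\eta(X) - \hat\eta(X)$ is a finite sum of operators of strictly positive $\phi_0(\mf h)$-weight, and iterating it on any $v \in \mf F_c^\eta$ eventually pushes every $\phi_0(\mf h)$-weight component above $\rho$, where the weight space is zero. The main obstacle will be executing the super-Jacobi calculation cleanly to establish $C_X = \hat\eta(X)$; the remaining steps amount to a systematic tracking of operator weights.
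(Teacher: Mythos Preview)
Your argument is correct and the core scalar computation identifying $C_X=\hat\eta(X)$ via the super-Jacobi identity is exactly the one the paper carries out. However, the organization differs. The paper works pointwise on the vacuum: it evaluates $\phi_\eta(E_\beta)\,|0\rangle$ directly using $\ov{\mf n}\,|0\rangle=0$, obtains $\phi_\eta(E_\beta)|0\rangle=\hat\eta(E_\beta)|0\rangle$, and then (implicitly) uses that $|0\rangle$ generates $\mf F_c^\eta$ together with the local nilpotency of $\ad\hat{\mf n}_\oa$ on $U(\g)$ (noted in Section~\ref{sect::221}) to conclude membership in $\g$-Wmod$^{\hat\eta}_c$. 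You instead decompose the operator $\phi_\eta(X)=\phi_0(X)+A_1+A_2+C_X$ globally and show that the first three summands raise $\phi_0(\mf h)$-weight strictly, deducing local nilpotency from the upper bound $\rho$ on the weight support of $\mf F_c$. Your route is slightly more self-contained (it does not appeal to cyclicity of $|0\rangle$ or to nilpotency of the adjoint action), at the price of the extra bookkeeping verifying that the cross-terms $A_1,A_2$ have only positive-weight contributions; the paper's route is shorter because the vacuum kills all non-scalar terms at once.
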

\begin{proof} Suppose that $\beta\in \Phi^+_\oa$, and let  $H_\beta:= [E_\beta, F_\beta]\in \h$ be the co-root of $\beta$. 
	We then calculate  
\begin{align*}
	2c\phi_\eta(E_\beta) |0\rangle  &= \sum_{\alpha\in \Phi^+}\left( \ov{[E_\beta,E_\alpha]}\,\ov{F_\alpha} + (-1)^{p(E_\alpha)}[\ov{E_\beta, F_\alpha]}\,\ov{E_\alpha}\right) |0\rangle +\sum_{i=1}^\ell \ov{[E_\beta, H_i]} \,\ov{H_i} |0\rangle \\
	& = \sum_{\alpha\in \Phi^+} \ov{F_\alpha}\, \ov{X_{\alpha+\beta}} |0\rangle + \sum_{\alpha\in \Phi^+}  (-1)^{p(E_\alpha)}\ov{[E_\beta, F_\alpha]}\,\ov{E_\alpha} |0\rangle  +\sum_{i=1}^\ell \ov{[E_\beta, H_i]}\, \ov{H_i} |0\rangle,\\ 
\end{align*}
for some $X_{\alpha+\beta}\in \mf s^{\alpha+\beta}$. Thus, we have
 \begin{align*}
 2c\phi_\eta(E_\beta) |0\rangle  &
	=   \sum_{\alpha \in \Delta_\ob}  (-1)^{p(E_\alpha)}\ov{[E_\beta, F_\alpha]}\,\ov{E_\alpha}  |0\rangle   -\sum_{i=1}^\ell (H_\beta| H_i) \ov{E_\beta}\, \ov{H_i}  |0\rangle, \\
	& =  - \sum_{\alpha \in \Delta_\ob}  \eta(\ov{[E_\beta, F_\alpha]})\eta(\ov{E_\alpha})  |0\rangle. \end{align*}  
 Therefore, $\phi_\eta(E_\beta)$ vanishes in the case when $\beta$ is not a sum of simple odd roots.  To prove the conclusion, assume that $\beta =\alpha_1+\alpha_2$, for some $\alpha_1,\alpha_2\in \Delta_\ob$. Set $X=[E_{\alpha_1},E_{\alpha_2}]$ to be a non-zero root vector. First, suppose that $\alpha_1$ and $\alpha_2$ are distinct. We calculate \begin{align*}
 &[X,F_{\alpha_1}] = [[E_{\alpha_1},E_{\alpha_2}], F_{\alpha_1}]=  -[H_{\alpha_1},E_{\alpha_2}]  = -(\alpha_2|\alpha_1) E_{\alpha_2}.
 \end{align*} Similarly, we have $[X,F_{\alpha_1}] = -(\alpha_1|\alpha_2) E_{\alpha_1}$. This implies that $$\phi_\eta(X)|0\rangle = \frac{(\alpha_1|\alpha_2)}{c}\eta(\ov{E_{\alpha_1}})\eta(\ov{E_{\alpha_2}})|0\rangle.$$
Next, we assume that $\alpha_1=\alpha_2$, which we denote by $\alpha$. Then we have $$[X,F_\alpha] = -2[H_\alpha, E_\alpha] =- 2(\alpha|\alpha) \eta(\ov{E_\alpha}).$$ Therefore, we obtain that  
\begin{align*}
&\phi_\eta(X)|0\rangle = \frac{(\alpha|\alpha)}{c} \eta(\ov{E_\alpha})^2|0\rangle.
\end{align*}
 This completes the proof.  
\end{proof}  
\begin{rem} \label{rem::13}
Let $\zeta$ be an arbitrary character of $\mf n_\oa$. We can view $\zeta$ as a character of $\hat{\mf n}_\oa$ by letting $\zeta(\ov{\mf n}_\oa)=0$. Suppose that $L$ is a simple Whittaker module over $\g$ associated to $\zeta$  such that  $\mf c L=0$.   When combined with Proposition \ref{prop::4},  Lemma \ref{prop::10} has the consequence that  $L\otimes \mf F_c^{\eta}$ is a simple  object in $\g$-Wmod$^{\zeta+\hat \eta}$, for any character $\eta: \ov{\mf n}_\oa\rightarrow \C$ such that $\eta({\ov{[\mf n_\oa,\mf n_\ob]}})=0$. 
\end{rem}

For a given character $\chi: \hat{\mf n}_\oa\rightarrow\C$, recall from Subsection \ref{sect::111} that the associated character $\zeta_\chi:\mf n_\oa\rightarrow \C$ is defined by  
\begin{align}  &\zeta_\chi(X):=  \chi(X) + \left\{\begin{array}{ll} \frac{-(\alpha_1|\alpha_2)}{c} \chi(\ov{X_1})\chi(\ov{X_2}), \\ \text{\qquad if $X=[X_1,X_2]$ for some $X_i\in \mf s^{\alpha_i}$ with $\alpha_i \in \Delta_\ob$};\\ 	0,  ~~~ \text{otherwise}. 
	\end{array} \right.  \label{eq::14}  \end{align} for any root vector $X$ in $\mf n_\oa$. 	If we let $\eta:= \chi|_{\ov{\mf n}_\oa}$ and $\hat \eta$  be the character of $\hat{\mf n}_\oa$ introduced in Lemma \ref{prop::10}, then $\zeta_\chi$ is exactly the character $\chi|_{\mf n_\oa} -\hat \eta|_{\mf n_\oa}.$ The following proposition will be an important ingredient in the proof of Theorem \ref{thm::1}.
\begin{prop}  \label{prop::13}   Let $\chi: \hat{\mf n}_\oa\rightarrow\C$ be a character, $\eta:= \chi|_{\ov{\mf n}_\oa}$, and $\zeta:=\zeta_\chi$.   Suppose that  $M\in \g\emph{-Wmod}^\chi_c$. Then we have  $M\cong N\otimes \mf F_c^\eta$, for some $N\in \mf s$-\emph{Wmod}$^{\zeta}$.   
\end{prop}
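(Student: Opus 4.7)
The plan is to realize $N$ as a ``vacuum subspace'' of $M$ for a suitable abelian subalgebra of $\mf c$, establish the tensor decomposition $M \cong N \otimes \mf F_c^\eta$ first at the level of $\mf c$-modules, and then transport the $\g$-structure across this isomorphism to discover the required $\mf s$-module structure on $N$.

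Let $I \supseteq \ov{\mf n}$ be the maximal isotropic subspace of $\ov{\mf s}$ used in Subsection~\ref{sect::Fock} to build $\mf F_c^\eta$; write $I = \ov{\mf n} \oplus J$ with $J \subseteq \ov{\mf h}$ a maximal isotropic complement, and extend $\eta$ to a character $\tilde\eta: I \to \C$ by $\tilde\eta(J) = \tilde\eta(\ov{\mf n}_\ob) = 0$. Because $[\ov{\mf s},\ov{\mf s}] \subseteq \C z$ is computed via $\alpha_D$ as in Subsection~\ref{sect::cen::212}, the isotropy of $I$ forces $[I,I] = 0$, so $I$ is an abelian subalgebra of $\g$; moreover every $x - \tilde\eta(x)$ with $x \in I$ acts locally nilpotently on $M$ (for even $x \in \ov{\mf n}_\oa$ by the hypothesis $M \in \g\text{-Wmod}^\chi_c$, and for odd $x$ because $x^2 = \tfrac{1}{2}[x,x]$ is a scalar multiple of $z$ that vanishes by isotropy). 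Setting
\[ N := \{v \in M : xv = \tilde\eta(x)v \text{ for all } x \in I\}, \]
the standard joint-eigenspace argument for abelian (super)algebras acting locally finitely gives $N \neq 0$ whenever $M \neq 0$.

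Next I would show $M \cong N \otimes \mf F_c^\eta$ as $\mf c$-modules via the evaluation map $\Phi: N \otimes \mf F_c^\eta \to M$, $n \otimes u|0\rangle \mapsto u\cdot n$ for $u \in U(\mf c)$. This is $\mf c$-equivariant and well-defined since $I + \C z$ acts on $N$ with character $\tilde\eta + c$. For \textbf{injectivity}, note that $|0\rangle$ is, up to scalar, the unique joint $(I,\tilde\eta)$-eigenvector in $\mf F_c^\eta$, so the corresponding joint eigenspace of $N \otimes \mf F_c^\eta$ is exactly $N \otimes \C|0\rangle$, on which $\Phi$ is visibly injective; any nonzero kernel of $\Phi$ would, by the argument above, contain such an eigenvector. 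For \textbf{surjectivity}, apply the same joint-eigenvector principle to the cokernel of $\Phi$ and observe that any joint eigenvector there lifts to a joint eigenvector in $M$, hence to an element of $N \subseteq \mathrm{Im}(\Phi)$.

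Finally, I would endow $N$ with an $\mf s$-action. For each $s \in \mf s$, set
\[ \tilde s := s - \frac{1}{2c}\sum_{j=1}^{q} \ov{[s,u^j]}\, \ov{u_j} \in U(\g), \]
with $\{u^i\},\{u_j\}$ the dual bases of \eqref{eq::5::dual}. By Lemma~\ref{lem::6::ext}, $\tilde s$ acts as zero on $\mf F_c^\eta$; then for $x \in I$ the supercommutator $[x,\tilde s]$ lies in $U(\mf c)$ and also annihilates $\mf F_c^\eta$ (since $\phi_\eta$ is a Lie superalgebra action), hence annihilates $M$ through the $\mf c$-isomorphism $\Phi$. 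A short check gives $x(\tilde s n) = \tilde\eta(x)\tilde s n$ for $n \in N$, so $\tilde s$ preserves $N$; similarly $[\tilde s_1,\tilde s_2] - \widetilde{[s_1,s_2]} \in U(\mf c) \cap \ker\phi_\eta$ vanishes on $M$, making $s \mapsto \tilde s|_N$ a Lie superalgebra homomorphism. Finite generation of $N$ over $\mf s$ follows from finite generation of $M$ over $\g$ by reading off the $|0\rangle$-component of a finite generating set. For the Whittaker character, given a root vector $X \in \mf n_\oa$,
\[ X(n \otimes |0\rangle) = (\tilde X n)\otimes|0\rangle + n\otimes \phi_\eta(X)|0\rangle = \bigl((\tilde X + \hat\eta(X))n\bigr)\otimes|0\rangle \]
by Lemma~\ref{prop::10}, so local nilpotency of $X - \chi(X)$ on $M$ translates to local nilpotency of $\tilde X - (\chi(X)-\hat\eta(X)) = \tilde X - \zeta_\chi(X)$ on $N$, giving $N \in \mf s\text{-Wmod}^{\zeta_\chi}$. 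I expect the most delicate step to be surjectivity of $\Phi$, where one must argue that a joint eigenvector in the cokernel can be lifted to an actual joint eigenvector in $M$ rather than merely a vector that becomes an eigenvector modulo the image; a filtration of $M$ by PBW-degree in a basis complementary to $I + \C z$ in $\mf c$ should supply the induction needed to close this point.
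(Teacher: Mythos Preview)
Your vacuum subspace $N$ is canonically identified with the paper's $\Hom_{\mf c}(\mf F_c^\eta,M)$ via $n\mapsto(u|0\rangle\mapsto un)$, so at the level of objects the two constructions agree; the difference is in how one proves that the evaluation map is an isomorphism and how the $\mf s$-action arises.

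The genuine gap is surjectivity of $\Phi$. Your argument says that a joint $(I,\tilde\eta)$-eigenvector $\bar m$ in the cokernel lifts to a joint eigenvector in $M$, but this lifting is exactly the nontrivial point: it asks that the short exact sequence $0\to\mathrm{Im}(\Phi)\to M\to M/\mathrm{Im}(\Phi)\to 0$ split over $I$, and nothing you have established gives that. A PBW filtration in a complement $K$ of $I+\C z$ does not help, because $K$ contains the ``creation'' operators $\ov{\mf n}^-$ and part of $\ov{\mf h}$, and there is no a priori grading on an arbitrary Whittaker module $M$ against which to induct. Concretely, for an odd creation variable $\ov{F_\alpha}$ the operator $\phi_0(\ov{E_\alpha})$ is not surjective on $\mf F_c^\eta$, so one cannot naively ``integrate'' to adjust a lift.

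The paper closes this gap with a single structural observation you are missing: $M$ is \emph{semisimple as a $\mf c$-module}. Indeed $U(\mf c)/(z-c)$ is the Weyl--Clifford superalgebra, so $\mf F_c^\eta$ is its unique simple module appearing here, and one checks directly that $\Ext^1_{\mf c}(\mf F_c^\eta,\mf F_c^\eta)=0$. Starting from a single Whittaker vector $m$ with $U(\mf c)m\cong\mf F_c^\eta$ and filtering by $U(\mf s)_k\cdot U(\mf c)m$, each successive quotient has only $\mf F_c^\eta$ as composition factor, and the vanishing of self-extensions forces every layer (hence $M$) to be a direct sum of copies of $\mf F_c^\eta$. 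Once $M$ is $\mf c$-semisimple, your $\Phi$ is automatically surjective and your eigenvector-lifting becomes legitimate.

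Two smaller points. First, your claim that $[x,\tilde s]\in U(\mf c)$ annihilates $M$ ``through the $\mf c$-isomorphism $\Phi$'' is circular, since $\Phi$ is what you are trying to establish; the correct reason is that $U(\mf c)/(z-c)$ is simple, so $\mf F_c^\eta$ is faithful over it, and hence anything in $U(\mf c)$ killing $\mf F_c^\eta$ already lies in $(z-c)U(\mf c)$ and kills $M$ by hypothesis. Second, ``reading off the $|0\rangle$-component of a generating set'' does not prove finite generation of $N$; the paper instead observes that any ascending chain of $\mf s$-submodules $N'\subset N$ yields an ascending chain of $\g$-submodules $N'\otimes\mf F_c^\eta\subset M$, and noetherianity of $M$ forces $N$ to be noetherian, hence finitely generated.
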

\begin{proof} 
	Let $m\in M$ be a non-zero vector such that $xm = \chi(x)m$, for any $x\in \hat{\mf n}_\oa$. Then the cyclic submodule $V:=U(\mf c)m$ generated by $m$ is isomorphic to the $\mf c$-module $\mf F_c^\eta$. Set $U(\mf s)_k$ to be subspace of $U(\mf s)$ spanned by elements of the form $x_1\cdots x_i$, for some $x_1,\ldots,x_i\in \mf s$ and $1\leq i\leq k$.  We may note that the $\mf c$-submodule $U(\mf s)_k V$ has a (finite) composition series with  composition factors all isomorphic to $\mf F_c^\eta$, and so it is semisimple. Since any self extension of $\mf F_c^\eta$ splits, it follows that  $M$ is a direct sum of simple $\mf c$-submodules, which are all isomorphic to $\mf F_c^\eta$, as  $\mf c$-modules.

	Recall that the $\mf c$-module $V\cong \mf F_c^\eta$ extends to a simple $\g$-module in Subsection \ref{sect::223}. We denote this resulting $\g$-module   by $V_\g$. Since both  $V_\g$ and $M$ are $\g$-modules, we can define the  canonical $\g$-module structure on $\Hom_\C(V_\g, M)$ determined by 
	$x\cdot f(v) = xf(v) -(-1)^{p(x)p(f)} f(xv)$, for any homogeneous elements $f\in \Hom_{\C}(V_\g,M)$, $x\in \mf g$ and $v\in V_\g$. Since $\mf c$ is an ideal of $\mf g$, it follows that the subspace $\Hom_{\mf c}(V_\g, M) = \Hom_\C(V_\g, M)^{\mf c}$ of  $\mf c$-invariants in $\Hom_\C(V_\g, M)$ is a $\g$-submodule, which we will denote by $N$.  	
	 This, together with the fact that $M$ is a completely reducible   $\mf c$-module,  leads to the natural $\g$-module isomorphism   $$N\otimes V_\g \xrightarrow{\cong} M,$$
	sending $f\otimes v$ to $f(v)$, for $f\in N$ and $v\in V_\g.$ 
	
	Since $\mf cN=0$, we can view $N$ as a $\mf s$-module.  It remains to show that  $N$ is an object in $\mf s$-Wmod$^{\zeta}$. To prove that conclusion, we may first assume that $M$ is indecomposable. It follows that  $N$ is indecomposable as well. Let $w\in V$ be a non-zero vector which spans a one-dimensional $\hat{\mf n}_\oa$-module, that is, $U(\hat{\mf n}_\oa)w = \C w$. We may observe that,  for any $v\in  N$, we have   $(xv)\otimes w=(x-\chi(x))\cdot(v\otimes w)$, for all $x\in \hat{\mf n}_\oa$. This implies that   $$(U(\hat{\mf n}_\oa)v)\otimes w \subseteq     U(\hat{\mf n}_\oa)\cdot (v\otimes w),$$  which is finite-dimensional by definition.  This implies that $N$ is locally finite over $U(\mf n)$. Furthermore, we note that $N$ is noetherian since $M\cong N\otimes \mf F_c^\eta$ and $M$ is noetherian. This implies that $N$ is finitely generated.   Now, since $N$ is indecomposable, there is a character $\zeta'$ of $\mf n_\oa$ such that $x-\zeta'(x)$ acts locally nilpotently on $N$, for any $x\in \mf n_\oa$. If we  extend $\zeta'$ to be a character of $\hat{\mf n}_\oa$ by letting $\zeta'(\ov{\mf n}_\oa)=0$, then it follows that $N\in \mf \g$-Wmod$^{\zeta'}_c$. Consequently, we have $M\in \g$-Wmod$^{\zeta'+\hat \eta}_c$ and thus $\zeta' = \chi|_{\mf n_\oa} -\hat \eta|_{\mf n_\oa} =\zeta$, where $\hat\eta$ is defined as above. Therefore, we have $N\in \mf s$-Wmod$^{\zeta}$. 
	
    Now if $M$ is decomposable, then $M$ decomposes into a finite sum of indecomposable modules, since it is finitely generated. This completes the proof. 
\end{proof}

The following is Theorem \ref{thm::1} which we restate for the convenience of the reader.
\begin{thm} \label{thm::1::restt}
	 Let $\chi: \hat{\mf n}_\oa\rightarrow \C$ be a character,  $\eta:= \chi|_{\ov{\mf n}_\oa}$, and $\zeta:=\zeta_\chi$ in \eqref{eq::14}. Then the tensor functor 
	\begin{align*}
	&\_\otimes \mf F_c^\eta: \mf s\emph{-Wmod}^{\zeta} \rightarrow \mf g\emph{-Wmod}^\chi_c
	\end{align*} is an equivalence of categories. 
\end{thm}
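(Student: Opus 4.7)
The plan is to exhibit a right adjoint to $\_\otimes \mf F_c^\eta$ and to verify that the resulting adjunction is an equivalence of categories. Define the candidate quasi-inverse
\begin{align*}
\mc G: \g\text{-Wmod}^\chi_c \to \mf s\text{-Wmod}^\zeta, \qquad \mc G(M):=\Hom_{\C}(\mf F_c^\eta, M)^{\mf c},
\end{align*}
where $\Hom_{\C}(\mf F_c^\eta, M)$ carries the standard super $\g$-action $(x\cdot f)(v)=xf(v)-(-1)^{p(x)p(f)}f(xv)$. Since $\mf c$ is an ideal of $\g$, the subspace of $\mf c$-invariants is preserved by the $\g$-action and is annihilated by $\mf c$, inheriting the structure of an $\mf s$-module. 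The tensor-hom adjunction in the super setting, combined with the trivial observation that any $\g$-linear map from an $\mf s$-module $N$ (on which $\mf c$ acts trivially) to an arbitrary $\g$-module $X$ must take values in $X^{\mf c}$, produces a natural isomorphism
\begin{align*}
\Hom_\g(N \otimes \mf F_c^\eta, M) \cong \Hom_{\mf s}(N, \mc G(M)),
\end{align*}
with unit $\eta_N: N \to \mc G(N \otimes \mf F_c^\eta)$ sending $n$ to the map $v \mapsto n\otimes v$, and counit $\epsilon_M: \mc G(M) \otimes \mf F_c^\eta \to M$ given by evaluation $f \otimes v \mapsto f(v)$.

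The well-definedness of $\_\otimes \mf F_c^\eta$ rests on the key identity $\chi = \zeta + \hat\eta$ on $\hat{\mf n}_\oa$, where $\hat\eta$ denotes the character produced in Lemma \ref{prop::10}. This identity is immediate from \eqref{eq::14}: on $\mf n_\oa$ one has $\zeta = \chi - \hat\eta|_{\mf n_\oa}$ by the very construction of $\zeta_\chi$, while on $\ov{\mf n}_\oa$ the character $\zeta$ vanishes and $\chi = \eta = \hat\eta$. Consequently, for every $x \in \hat{\mf n}_\oa$ and every $N \in \mf s\text{-Wmod}^\zeta$, the operator on $N\otimes \mf F_c^\eta$ decomposes as
\begin{align*}
x - \chi(x) = (x-\zeta(x))\otimes 1 + 1\otimes (x-\hat\eta(x)),
\end{align*}
a sum of two commuting operators, locally nilpotent on their respective factors (by hypothesis on $N$ and by Lemma \ref{prop::10} for $\mf F_c^\eta$), hence locally nilpotent on the tensor product by a standard binomial estimate. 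Combined with the obvious facts that $z$ acts as the scalar $c$ and that $N\otimes \mf F_c^\eta$ is finitely generated over $U(\g)$ (since $U(\mf c)(n\otimes |0\rangle) = n\otimes \mf F_c^\eta$), this places $N\otimes \mf F_c^\eta$ in $\g\text{-Wmod}^\chi_c$.

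The main input for concluding is Proposition \ref{prop::13}: its proof already establishes that, for every $M \in \g\text{-Wmod}^\chi_c$, the multiplicity space $\mc G(M)$ lies in $\mf s\text{-Wmod}^\zeta$ and that the evaluation map $\mc G(M)\otimes \mf F_c^\eta \to M$, which is precisely the counit $\epsilon_M$, is an isomorphism of $\g$-modules. The triangle identity $\epsilon_{N\otimes \mf F_c^\eta}\circ(\eta_N\otimes 1_{\mf F_c^\eta}) = 1_{N\otimes \mf F_c^\eta}$ then forces $\eta_N\otimes 1_{\mf F_c^\eta}$ to be an isomorphism; faithfulness of tensoring with the nonzero object $\mf F_c^\eta$ over $\C$ upgrades this to $\eta_N$ being an isomorphism, which is automatically $\mf s$-linear. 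The principal technical difficulty, namely that every object of $\g\text{-Wmod}^\chi_c$ decomposes as a $\mf c$-module into copies of $\mf F_c^\eta$ so that $\epsilon_M$ is surjective, is already handled inside Proposition \ref{prop::13}; once it is in hand, the theorem reduces to formal adjunction bookkeeping.
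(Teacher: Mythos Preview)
Your proof is correct and follows essentially the same route as the paper: both define the candidate quasi-inverse $G(M)=\Hom_{\mf c}(\mf F_c^\eta,M)$, both appeal to the proof of Proposition~\ref{prop::13} to see that $G$ lands in $\mf s\text{-Wmod}^\zeta$ and that the counit is an isomorphism, and both set up the adjunction with the evident unit and counit. The only cosmetic difference is that the paper verifies directly that the unit $N\to \Hom_{\mf c}(\mf F_c^\eta,N\otimes\mf F_c^\eta)$ is an isomorphism (its equation~\eqref{eq::16}), whereas you deduce this from the triangle identity together with the fact that the counit is already known to be an isomorphism; both arguments are valid and equally short.
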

\begin{proof}
 	Let  $M\in\g$-Wmod$^{\chi}_c$. We have already seen  in the proof of Proposition \ref{prop::13}  that  $\Hom_{\mf c}(\mf F_c^\eta, M)$ is an object in $\mf s$-Wmod$^\zeta$ such that $M\cong {\Hom_{\mf c}(\mf F_c^\eta, M)}\otimes \mf F_c^\eta$. This gives rise to a  well-defined functor 
\begin{align}
&G(\_):=\Hom_{\mf c}(\mf F_c^\eta,\_): \g\text{-Wmod}^{\chi}_c \rightarrow \mf s\text{-Wmod}^\zeta.
\end{align}  
On the other hand,  if  ${N}\in \mf s$-Wmod$^{\zeta}$, then we have an isomorphism of $\mf s$-modules:     \begin{align}
&{N} \xrightarrow{\cong} \Hom_{\mf c}(\mf F_c^\eta,{N} \otimes \mf F_c^\eta),~v\mapsto \phi_v, \text{ for all }v\in {N},   \label{eq::16}
\end{align} where   $\phi_v$ sends $x$ to $v\otimes x$, for any $x\in \mf F_c^\eta$.   Combined with the fact that any $\g$-module subquotients of  ${N}\otimes \mf F_c^\eta$ are semisimple $\mf c$-modules, it follows  that $N\otimes \mf F_c^\eta$ is Noetherian. Consequently, we have   $N\otimes \mf F_c^\eta\in \mf g\text{-Wmod}^{\chi}_c$ and hence   
 $$F(\_):=\_\otimes \mf F_c^\eta: \mf s\text{-Wmod}^{\zeta} \rightarrow \mf g\text{-Wmod}^\chi_c$$ is a well-defined functor. Now, we have  an adjoint pair $(F,G)$ of functors between  $\mf s\text{-Wmod}^\zeta$ and $\g\text{-Wmod}^{\chi}_c$ with the unit $\varepsilon$ and counit $\varepsilon'$ given respectively by: 
\begin{align}
	& N \xrightarrow{\varepsilon_N} GF(N),~v\mapsto \phi_v, \text{ for all }v\in N, \label{eq::17} \\
	&FG(M) \xrightarrow{\varepsilon'_M} M,~f\otimes v\mapsto f(v), \text{ for all }f \in \Hom_{\mf c}(\mf F_c^\eta,  M), v\in \mf F_c^\eta,\label{eq::18}
\end{align} for any $N\in \mf s\text{-Wmod}^{\zeta}$ and  $ M\in \g\text{-Wmod}^{\chi}_c$. We note that  $\varepsilon_{N}$ is precisely the homomorphism in \eqref{eq::16}, and thus, is an isomorphism. It follows by  Proposition \ref{prop::13} that $\varepsilon'_M$ is an isomorphism. This completes the proof.  \end{proof}

\subsection{Example: $\mf s=\mf{gl}(m|n)$} 
In this subsection, we classify the simple Whittaker modules over $\mf g$ in the case when $\mf s=\gl(m|n)$.  We refer to \cite[Subsection 3.2]{C21} for the details.  Recall that the general linear Lie superalgebra $\mathfrak{gl}(m|n)$ 
can be realized as the space of $(m+n) \times (m+n)$ complex matrices
\begin{align*}
	\mf s:=\gl(m|n):=  	\left\{  \left( \begin{array}{cc} A & B\\
		C & D\\
	\end{array} \right) \|~ A\in \C^{m\times m}, B\in \C^{m\times n}, C\in \C^{n\times m}, D\in \C^{n\times n}
	\right\},
\end{align*} 
with the Lie bracket    given by the super commutator.  Then, $\mf s$ admits a $\Z_2$-compatible $\Z$-grading $\mf s = \mf s_{1}\oplus \mf s_0 \oplus \mf s_{-1}$ given by 
\begin{align*}
	&\mf s_0=\mf s_\oa \cong \gl(m)\oplus \gl(n), \\
	&\mf s_1:= 
	\left\{\begin{pmatrix}
		0 & B \\
		0 & 0
	\end{pmatrix}\|~B\in \C^{m\times n}\right\}\quad\mbox{and}\quad \mf s_{-1}:=
	\left\{\begin{pmatrix}
		0 & 0 \\
		C & 0
	\end{pmatrix}\|~C\in \C^{n\times m}\right\}.
\end{align*}

The bilinear form $(\_|\_)$  can be just chosen to  be the  standard super-trace form.   As we have already explained in Subsection \ref{sect::221}, the Whittaker category $\g$-Wmod is independent of the triangular decomposition. Hence we may fix the Cartan subalgebra $\h$ consisting of all diagonal matrices and the radical subalgebras $\mf n$ and $\mf n^-$ consisting of all strictly upper and strictly lower triangular matrices, respectively. 

Let $\zeta$ be a character of $\mf n_\oa$. Set  $\mf l_\zeta$ to be the subalgebra of $\mf s_\oa$ generated  by $\mf h$ and  $\mf s_\oa^{\pm\alpha}$, for $\alpha \in \Phi^+_\oa$ with $\zeta(\mf s_\oa^\alpha)\neq 0$. Observe that $\mf l_\zeta$ is a Levi subalgebra in the parabolic subalgebra $\mf p = \mf l_\zeta+\mf s_1$. Let $W(\mf l_\zeta)$ denote the Weyl group of $\mf l_\zeta$, which acts on $\h^\ast$ via the dot-action. Let $\h^\ast/\sim$ denote a set of representatives of orbits in $\h^\ast$ under the dot-action of $W(\mf l_\zeta)$.

Building on the
work of McDowell \cite{Mc85} and  Mili{\v{c}}i{\'c} and Soergel \cite{ MS97},  we define   the {\em standard Whittaker modules} (see also \cite{C21}):
\begin{align}
	&M(\la, \zeta):= U(\mf s)\otimes_{U(\mf p)} Y_\zeta(\la, \zeta),~\text{ for } \la \in \h^\ast.\label{def::stdW}
\end{align}
Here $Y_\zeta(\la, \zeta):=U(\mf l_\zeta)/(\text{Ker}\chi_\la^{\mf l_\zeta}) U(\mf l_\zeta)\otimes_{U({\mf n}\cap \mf l_\zeta)}\mathbb C_\zeta$ denotes Kostant's simple Whittaker $\mf l_\zeta$-modules from \cite{Ko78}, where $\chi_\la^{\mf l_\zeta}$ is the central character of $\mf l_\zeta$ associated to $\la$ and $\C_\zeta$ is the one-dimensional ${\mf n}\cap \mf l_\zeta$-module induced by the character $\zeta|_{{\mf n}\cap \mf l_\zeta}$.  It turns out that each $M(\la,\zeta)$ has a simple top, which is denoted by $L(\la,\zeta)$. Furthermore, $\{L(\la,\zeta)|~\la\in \h^\ast/\sim\}$ is the complete list of simple objects in $\mf s$-Wmod$^{\zeta}$.

The following is an immediate consequence of Theorem \ref{thm::1::restt}.
\begin{cor}  Let $\chi: \hat{\mf n}_\oa\rightarrow \C$ be a character,  $\eta:= \chi|_{\ov{\mf n}_\oa}$, and $\zeta:=\zeta_\chi$ in \eqref{eq::14}.  Then the following set 
	\begin{align}
		&\{L(\la,\zeta)\otimes \mf F_c^\eta|~\la\in \h^\ast/\sim\},
	\end{align} 
	is an exhaustive list of pairwise non-isomorphic simple objects in $\g$-\emph{Wmod}$^{\chi}_c$. 
\end{cor}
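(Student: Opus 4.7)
The plan is to deduce the corollary directly from Theorem \ref{thm::1::restt} combined with the known classification of simple objects in $\mf s\text{-Wmod}^{\zeta}$ for $\mf s=\gl(m|n)$ recalled just before the statement. An equivalence of categories sends simple objects to simple objects, and it induces a bijection on isomorphism classes; therefore all we need to do is transport the classification through the functor $\_\otimes \mf F_c^\eta$.

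More precisely, I would first invoke the result recalled from \cite{Mc85, MS97, Ko78, C21}, which says that $\{L(\la,\zeta) \mid \la \in \h^\ast/\sim\}$ is a complete set of pairwise non-isomorphic simple objects in $\mf s\text{-Wmod}^{\zeta}$ (with $\zeta = \zeta_\chi$). Next, by Theorem \ref{thm::1::restt} the tensor functor
\[
F :=\_\otimes \mf F_c^\eta \colon \mf s\text{-Wmod}^{\zeta} \longrightarrow \g\text{-Wmod}^{\chi}_c
\]
is an equivalence of categories. Since equivalences preserve and reflect simplicity, the set $\{F(L(\la,\zeta)) \mid \la \in \h^\ast/\sim\} = \{L(\la,\zeta)\otimes \mf F_c^\eta \mid \la \in \h^\ast/\sim\}$ is a set of pairwise non-isomorphic simple objects of $\g\text{-Wmod}^{\chi}_c$.

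It remains to verify exhaustiveness. Given any simple $S \in \g\text{-Wmod}^{\chi}_c$, the quasi-inverse $G = \Hom_{\mf c}(\mf F_c^\eta, \_)$ constructed in the proof of Theorem \ref{thm::1::restt} produces a simple object $G(S) \in \mf s\text{-Wmod}^{\zeta}$ with $F(G(S)) \cong S$ via the counit $\varepsilon'_S$ of \eqref{eq::18}. By the classification on the $\mf s$-side, $G(S) \cong L(\la,\zeta)$ for some (unique up to $W(\mf l_\zeta)$-orbit) $\la \in \h^\ast/\sim$, so $S \cong L(\la,\zeta)\otimes \mf F_c^\eta$.

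There is no real obstacle here: the content of the corollary is entirely concentrated in Theorem \ref{thm::1::restt} (which has already been proved) and in the classical classification of simple Whittaker modules over $\gl(m|n)$ (which is recalled from the literature). The only potentially delicate point is ensuring that the parametrization by $\h^\ast/\sim$ is indeed injective after passing through $F$; but this is automatic because $F$, being an equivalence, preserves isomorphism classes.
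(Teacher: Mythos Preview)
Your proposal is correct and follows exactly the approach the paper intends: the paper states only that the corollary is ``an immediate consequence of Theorem~\ref{thm::1::restt},'' and you have simply spelled out that immediate consequence by transporting the known classification of simples in $\mf s\text{-Wmod}^{\zeta}$ through the equivalence $\_\otimes \mf F_c^\eta$.
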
 

\section{Finite SUSY $W$-algebras}  \label{sect::4}
The main purpose of this section is to prove Theorem \ref{thm::2}. We first establish an analogue of the Skryabin  equivalence \cite{Skr} for a class of finite-dimensional graded Lie superalgebras equipped with a ``nilcharacter''. We then study finite SUSY $W$-algebras at non-critical levels and prove the SUSY version of Skryabin equivalence.
\subsection{A Skryabin type equivalence} \label{sect::41}
Throughout this section, we fix a   finite-dimensional   Lie superalgebra $\mf a$ equipped with a $\Z$-grading $\mf a=\bigoplus_{i\in \Z}\mf a(i)$.  
Consider the $\Z$-graded subalgebra  $\mf m= \bigoplus_{i\leq -1} \mf a(i)$ and set $\mf m(i):=\mf m \cap \mf a(i)$, for all $i\in \Z$.    For any $x\in \mf a(i)$, we  denote the degree of $x$ by  $\deg(x):=i$.

Suppose that $\varphi:\mf m\rightarrow \C$ is a character, and let $\C_\varphi$ denote the one-dimensional $\mf m$-module induced by $\varphi$. We consider the following induced $\mf a$-module $$Q_\varphi:=U(\mf a)\otimes_{U(\mf m)} \C_\varphi\cong U(\mf a)/I_\varphi,$$
where $I_\varphi$ is the left ideal of $U(\mf a)$ generated by $x-\varphi(x)$ for $x\in \mf m$, and call $Q_\varphi$ the generalized Gelfand-Graev module associated with $\varphi$.  Let $\pr(\_): U(\mf a)\rightarrow U(\mf a)/I_\varphi$ denote the natural projection. We define an associate superalgebra $\mc W_\varphi$ in the spirit of \cite{Pr02}:  
\begin{align*}
&\mc W_\varphi:=\{ \pr(y)\in U(\mf a)/I_\varphi|~(x-\chi(x))y\in I_\varphi,~\forall x\in \mf m\}, 
\end{align*}
with the multiplication  given by 
$\pr(y_1)\pr(y_2) = \pr(y_1y_2)$, 
 for any $y_1,y_2\in U(\mf a)$ such that $[x,y_1], [x,y_2]\in I_\varphi$, for all $x\in \mf m$. As usual, the algebra $\mc W_\varphi$ can also be identified with the opposite algebra of the endomorphism algebra $\End_{U(\mf a)}(Q_\varphi)$, so that $Q_\varphi$ is a $(U(\mf a),\mc W_\varphi)$-bimodule.

Let $\mf a$-WMod$^\varphi$  denote the category of all  $\mf a$-modules on which $x-\varphi(x)$ acts locally nilpotently, for all $x\in\mf m.$ Also, let $\mc W_\varphi\Mod$ denote the category of all $\mc W_\varphi$-modules.  Define the following left exact functor $\text{Wh}_\varphi(\_)$ from $\mf a\text{-WMod}^\varphi$ to $\mc W_\varphi\Mod$ by  \begin{align}\label{eq:Whitt:func}
	&\text{Wh}_\varphi(M):=\{v\in M|~xv=\varphi(x)v,~\text{for all }x\in \mf m\}, \text{ for $M\in \mf a$-WMod$^\varphi$.}
\end{align} We shall refer it to as the associated {\em Whittaker functor}.

Since $\mf m$ is negatively graded, it follows that the action of $\text{ad}x$ on $U(\mf a)$ is locally nilpotent, for any $x\in \mf m$. This leads to a well-defined functor $Q_\varphi\otimes_{\mc  W_\varphi}(\_):  W_\varphi\text{-Mod}\rightarrow  \mf a \text{-WMod}^\varphi$, which is a right adjoint of $\text{Wh}_\varphi(\_)$.

In what follows, we let $\{u_1,\ldots,u_m\}$ be a basis for $\mf m$ such that $\{u_1,\ldots,u_{m'}\}\subseteq \mf m_\oa$ and $\{u_{m'+1},\ldots, u_{m}\}\subseteq \mf m_\ob$ and they  are homogenous with respect to   the $\Z$-grading of $\mf m$. Let  
\begin{align}  
	&\{x_1,\ldots , x_{m'}\}\subseteq\mf a_\oa, ~\{x_{m'+1},\ldots, x_m\}\subseteq\mf a_\ob, \label{eq::552}
\end{align} be homogenous elements with respect to the $\Z$-grading such that $\deg(x_s)= -1+d_s$, where $d_s := -\deg(u_s)>0$, for any $1\leq s\leq m$.  
The following theorem is an analogue of the classical Skryabin equivalence \cite{Skr} in our setup. For the proof, see Appendix \ref{sect::append}. We refer to \cite[Theorem 22]{CC23_2} for an analogous version for quasi-reductive Lie superalgebras.
\begin{thm} \label{thm1} Retain the notations and assumptions above. Suppose that  the following conditions are satisfied: 
	\begin{itemize} 
		\item[(1)] For any $1\leq i\leq m$, we have  $[u_i,x_i]\in \mf m(-1)$ and $\varphi([u_i,x_i]) =1$.
		\item[(2)] Let  $1\leq i\neq j\leq m$. If  $[u_i, x_j]\in \mf m(-1)$  then $\varphi([u_i,x_j])=0$.
		\item[(3)] $\varphi$ vanishes on $\bigoplus_{i\leq -2}\mf a(i)$.
	\end{itemize}
 Then the Whittaker functor $\emph{Wh}_\varphi(\_):\mf a \emph{-WMod}^\varphi\rightarrow  \mc W_\varphi\emph{-Mod}$ is an equivalence of categories with the quasi-inverse $Q_\varphi\otimes_{\mc  W_\varphi}(\_): \mc  W_\varphi\emph{-Mod}\rightarrow \mf a \emph{-WMod}^\varphi$.
\end{thm}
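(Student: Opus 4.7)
The plan is to verify directly that both unit and counit of the adjoint pair $\bigl(Q_\varphi \otimes_{\mc W_\varphi}(\_),\,\text{Wh}_\varphi(\_)\bigr)$ are isomorphisms, adapting the classical strategy of Skryabin to this graded super setting. The technical backbone will be a Kazhdan-type filtration combined with a PBW-style construction of Whittaker lifts for a homogeneous basis of a complement of $\mf m$ in $\mf a$; the hypotheses (1)--(3) are exactly what is needed for $\varphi|_{\mf a(-1)}$ to behave like a nondegenerate Darboux pairing whose symplectic partners are the $x_i$.

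First I would fix a homogeneous complement $\mf a^+ := \bigoplus_{i \geq 0} \mf a(i)$ of $\mf m$ in $\mf a$ and assign Kazhdan weight $i+2$ to any element of $\mf a(i)$. This induces an algebra filtration $F_\bullet U(\mf a)$; hypothesis (3) ensures that the left ideal $I_\varphi$ is filtered, so $Q_\varphi$ inherits a filtration whose associated graded is identified with $\mc S(\mf a^+)$ via the super PBW theorem. For each homogeneous $y \in \mf a^+$ I construct inductively on Kazhdan degree an element $\Theta(y) \in \mc W_\varphi$ whose class in $\gr Q_\varphi$ is $y$: starting with the canonical lift of $y$, the obstruction $(u_i - \varphi(u_i))\,\Theta^{(k)}(y)$ lies in strictly lower Kazhdan degree, and hypotheses (1)--(2) (which say that $\varphi([u_i,x_j]) = \delta_{ij}$ whenever $[u_i,x_j] \in \mf m(-1)$) make it possible to kill the obstruction by a polynomial correction in the $x_j$'s, after careful sign bookkeeping. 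Extending multiplicatively over ordered supermonomials in $\mf a^+$ and comparing Kazhdan-graded characters with $\gr Q_\varphi \cong \mc S(\mf a^+)$ shows that these lifts form a $\C$-basis of $\mc W_\varphi$, that the multiplication map $\mc S(\mf a^+) \otimes_\C \mc W_\varphi \to Q_\varphi$ is a filtered isomorphism of right $\mc W_\varphi$-modules, and hence that $Q_\varphi$ is free as a right $\mc W_\varphi$-module and $Q_\varphi \otimes_{\mc W_\varphi}(\_)$ is exact.

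With this structural result in hand I would verify the counit and unit separately. For $M \in \mf a\text{-WMod}^\varphi$, filter $M$ by $M_{(k)} := \{v \in M \mid \mf m_\varphi^{k+1} v = 0\}$, where $\mf m_\varphi$ denotes the left ideal of $U(\mf m)$ generated by $\{x - \varphi(x) \mid x \in \mf m\}$; local nilpotency makes the filtration exhaustive with $M_{(0)} = \text{Wh}_\varphi(M)$, and the counit $\varepsilon_M : Q_\varphi \otimes_{\mc W_\varphi} \text{Wh}_\varphi(M) \to M$ is filtered and on associated graded coincides with the canonical map $\mc S(\mf a^+) \otimes_\C \text{Wh}_\varphi(M) \to \gr M$, which is an isomorphism by the Darboux structure established above. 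Dually, for $N \in \mc W_\varphi\text{-Mod}$ the unit $N \to \text{Wh}_\varphi(Q_\varphi \otimes_{\mc W_\varphi} N)$ becomes, via the bimodule isomorphism from the second paragraph, inclusion of $N$ as the degree-zero component of $\mc S(\mf a^+) \otimes_\C N$, and its bijectivity reduces to a Koszul-type computation for the complex $\bigwedge^{\bullet} \mf m \otimes \mc S(\mf a^+) \otimes N$ whose exactness is driven precisely by the nondegeneracy in (1)--(2).

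The main obstacle is the inductive construction of the Whittaker lifts $\Theta(y)$: one must handle even and odd basis elements of $\mf m$ on equal footing, track the signs arising from super PBW reordering accurately, and verify at each stage that the correction truly lies in strictly smaller Kazhdan degree. Hypotheses (1)--(3) are tailored to make this induction close; once it is in place, exhaustiveness of the filtration on $M$ and the Koszul-type vanishing for the unit follow essentially formally.
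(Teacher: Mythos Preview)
Your proposal contains a genuine conceptual gap in the second paragraph. You claim that for each homogeneous $y\in\mf a^+=\bigoplus_{i\ge 0}\mf a(i)$ one can construct a lift $\Theta(y)\in\mc W_\varphi$ with symbol $y$, and that the resulting supermonomials furnish a $\C$-basis of $\mc W_\varphi$. This cannot be true: by PBW, $Q_\varphi$ itself already has the same graded character as $\mc S(\mf a^+)$, so your claim would force $\mc W_\varphi=Q_\varphi$, which happens only when $\mf m=0$. Concretely, the elements $x_i\in\mf a^+$ do \emph{not} admit lifts to $\mc W_\varphi$: condition~(1) gives $(u_i-\varphi(u_i))\,x_i 1_\varphi = \varphi([u_i,x_i])\,1_\varphi = 1_\varphi$, and this obstruction lies in Kazhdan degree $0$ where there is nothing left to correct by. The $x_i$'s are precisely the directions transverse to $\mc W_\varphi$ inside $Q_\varphi$, not generators of $\mc W_\varphi$. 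For the same reason, your asserted isomorphism $\mc S(\mf a^+)\otimes_\C \mc W_\varphi\to Q_\varphi$ is dimensionally impossible.

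What the paper does instead is closer to Skryabin's original argument and avoids any PBW description of $\mc W_\varphi$ altogether. One works directly with the monomials $x^{\bf a}=x_1^{a_1}\cdots x_m^{a_m}$ (only the $m=\dim\mf m$ distinguished elements $x_i$, not all of $\mf a^+$) and proves an upper-triangularity statement: for $v\in\text{Wh}_\varphi(M)$ and a suitable total order on multi-indices, $u^{\bf a}x^{\bf b}v=0$ when ${\bf a}>{\bf b}$ and $u^{\bf a}x^{\bf a}v$ is a nonzero scalar multiple of $v$. Conditions (1)--(3) are used exactly here, via an explicit commutator calculation. This yields at once that every $v\in M$ is uniquely of the form $\sum_{\bf a}x^{\bf a}v_{\bf a}$ with $v_{\bf a}\in\text{Wh}_\varphi(M)$, and that $\{x^{\bf a}1_\varphi\}_{{\bf a}\in X}$ is a free right $\mc W_\varphi$-basis of $Q_\varphi$. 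Both the unit and counit isomorphisms then follow immediately from these two facts, with no need for a Koszul argument or an identification of $\gr\mc W_\varphi$.
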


\subsection{Finite SUSY $W$-algebras $\mc{SW}_c(\mf s,e)$}\label{sec:finite:SUSY} In the remainder of the paper, we let $\mf s$ be a basic classical Lie superalgebra. We shall keep the notation and assumptions of the previous subsection.  Let $e$ be an odd nilpotent element with $[e,e]=2E\not=0$ in $\mf s$. In the sequel, we shall always assume that $e$ is {\em neat} in the sense of \cite{ES22}. Recall that this means that, for any finite-dimensional $\mf s$-module $V$, the $\C e+\C E$-module $\text{Res}^{\mf s}_{\C e+\C E}V$ is a direct sum of indecomposable modules, each of which has nonzero super-dimension. According to \cite[Theorem 4.2.1]{ES22}, there exists a copy of $\mf{osp}(1|2)$ spanned by the basis elements $\{ F,f,h,e,E \}\subset\mf s$, and, additionally, all such copies obtained from $e$ are  conjugate to each other. A simple criterion to check whether an odd element is neat is provided by \cite[Lemma 6.5.10]{ES22}, which says that $e$ is neat is equivalent to the existence of a finite-dimensional faithful $\mf s$-module V such that the $\C e+\C E$-module $\text{Res}^{\mf s}_{\C e+\C E}V$ is a direct sum of indecomposable modules, each of which has nonzero super-dimension.

Suppose that $e$ is a neat element in $\mf s$, and we have a corresponding copy of $\mf{osp}(1|2)=\langle F,f,h,e,E\rangle\subset\mf s\subset\g$ as before. The adjoint action $\ad h: \mf g\rightarrow \g$ provides a $\Z$-grading $\g=\bigoplus_{i\in \Z} \g(i)$, where $\g(i) = \{x\in \g|~\ad h(x)=ix\}$.  Let $\mf s(i) = \g(i)\cap \mf s$, for all $i$. Then we have 
	\begin{align*} \g(i)= \left\{\begin{array}{ll}  \mf s(i)\oplus \ov{\mf s}(i), &  \text{ if }i\neq 0;\\ 	\mf s(0)\oplus \ov{\mf s}(0)+\C z, & \text{ if }i=0. 
	\end{array} \right. \end{align*} 
For any $i\in \Z$, we set $\mf s_{\leq i}:=\bigoplus_{j\leq i} \mf s(i)$ and  $\g_{\leq i}:=\mf s_{\leq i}\oplus \ov{\mf s}_{\leq i}$.  
Define the following $\Z$-graded subalgebra 
$$\mf m:= \mf g_{\leq -1} = \mf s_{\leq -1}\oplus \ov{\mf s}_{\leq -1}.$$ 

 Recall that $(\_|\_)$ denotes  a given even non-degenerate supersymmetric invariant bilinear form of $\mf s$, which induces a non-degenerate bilinear in $(\_|\_)'$ of $\mf s\otimes \Lambda(\theta)$ from Subsection \ref{sect::cen::212}.  There is a unique $\chi^e \in \mf m^\ast$ such that ($x\in \mf s_{\le -1}, i=0,1$) 	\begin{align*} \chi^e(x\otimes \theta^i)= (e|x\otimes \theta^i)' = \left\{\begin{array}{ll}  \mf (e|x), &  \text{ if } i= 1;\\ 0, & \text{ otherwise}. 
	\end{array} \right. \end{align*} 
 Since $e$ is an odd element lying in $\g(1)$, we see that $\chi^e$ vanishes on $[\mf m,\mf m]$, and therefore defines a character on $\mf m$. Set
\begin{align*}
    \mf m_{\chi^e}:=\{x-\chi^e(x)\mid x\in\mf m\}.
\end{align*}
As in Section \ref{sect::41}, we form the corresponding algebra 
 \begin{align*}
     &\mc W_{\chi^e}=\left(U(\g)/U(\g)\mf m_{\chi^e}\right)^{\text{ad}\mf m}.
 \end{align*}  
Let $U^c(\g)$ be the quotient algebra of $U(\g)$ by the ideal generated by $z-c$ and $Q_{\chi^e}^c$ be the quotient module $   Q_{\chi^e}/(z-c)Q_{\chi^e}$.  We recall the {\em finite supersymmetric (SUSY) $W$-algebra of $\mf s$ associated to $e$ at level $c$}:  \[\mc{SW}_c(\mf s,e) :=(Q_{\chi^e}^c)^{\ad{\mf m}}\cong (U^c(\g)\otimes _{U(\mathfrak{m})}\mathbb{C}_{\chi^e})^{\text{ad}\mathfrak{m}},\]
introduced in Subsection \ref{sect::111}. Observe that the canonical quotient $Q_{\chi^e}\rightarrow Q^c_{\chi^e}$ induces a homomorphism of algebras:
\begin{align}\label{eq:psie}
    \psi_e: \mc W_{\chi^e}\rightarrow \mc{SW}_c(\mf s,e),\qquad y+I_{\chi^e}\stackrel{\psi^e}{\rightarrow} y+\left(I_{\chi^e}+(z-c)U(\g)\right),
\end{align}
for $ y+I_{\chi^e}\in \mc W_{\chi^e}$. Similarly, we let $U^c(\g)$-WMod$^{\chi^e}$ denote the category of all  $U^c(\g)$-modules on which $x-\chi^e(x)$ acts locally nilpotently, for all $x\in\mf m.$ We may regard $U^c(\g)$-WMod$^{\chi^e}$ as the full subcategory of $\g$-WMod$^{\chi^e}$ consisting of objects on which $z-c$ acts trivially. Also, let $\mc{SW}_c(\mf s,e)\Mod$ denote the category of all $\mc{SW}_c(\mf s,e)$-modules. Furthermore, we define the associated Whittaker functor $$\text{Wh}^c_{\chi^e}(-): U^c(\g)\text{-WMod}^{\chi^e} \rightarrow \mc{SW}_c(\mf s,e) \Mod$$ in a completely analogous fashion as in \eqref{eq:Whitt:func}.

\begin{thm} Retain the notations and assumptions above. The Whittaker functors
\begin{align}
    &\emph{Wh}_{\chi^e}(-):\g \emph{-WMod}^{\chi^e}\rightarrow  \mc W_{\chi^e}\emph{-Mod},\label{eq::21}\\
    &\emph{Wh}^c_{\chi^e}(-): U^c(\g)\emph{-WMod}^{\chi^e}\rightarrow  \mc{SW}_c(\mf s,e)\emph{-Mod} \label{eq::22}
\end{align}
  are equivalences of categories with the respective quasi-inverses
  \begin{align}
      &Q_{\chi^e}\otimes_{\mc W_{\chi^e}}(-):  \mc W_{\chi^e}\emph{-Mod}\rightarrow  \g \emph{-WMod}^{\chi^e},\label{eq::23}\\
      &Q^c_{\chi^e}\otimes_{\mc{SW}_c(\mf s,e)}(-): \mc{SW}_c(\mf s,e)\emph{-Mod} \rightarrow U^c(\g)\emph{-WMod}^{\chi^e}. \label{eq::24}
  \end{align}
\end{thm}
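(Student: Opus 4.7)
The plan is to derive both equivalences from Theorem \ref{thm1}, applied to $\mf a=\g$, $\mf m=\g_{\leq -1}$, and $\varphi=\chi^e$: the first equivalence \eqref{eq::21}-\eqref{eq::23} then follows directly, while the second \eqref{eq::22}-\eqref{eq::24} will be obtained by specializing the central element $z$ to the scalar $c$. We must therefore verify conditions (1)-(3) of Theorem \ref{thm1}.

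Condition (3) follows from the $\text{ad}\,h$-grading and invariance of $(\_|\_)$: the form pairs $\mf s(j)$ with $\mf s(-j)$ and vanishes on $\mf s(j)\times\mf s(k)$ for $j+k\neq 0$, so $e\in\mf s(1)$ forces $(e|x)=0$ for $x\in\mf s(-d)$ with $d\geq 2$. Combined with $\chi^e|_{\mf s_{\leq -1}}\equiv 0$ from the definition, this gives $\chi^e|_{\g(-d)}=0$ for all $d\geq 2$.

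For conditions (1)-(2), I will construct, for each $d\geq 1$, homogeneous bases $\{u_i\}\subset\mf m(-d)$ together with homogeneous partners $\{x_i\}\subset\g(d-1)$ of matching parities satisfying $\chi^e([u_i,x_j])=\delta_{ij}$. The key tool is the super-bilinear pairing
\begin{equation*}
B_d:\mf m(-d)\times\g(d-1)\to\C,\qquad B_d(u,v):=\chi^e([u,v]).
\end{equation*}
Writing $u=y+\ov c$ and $v=a+\ov b$ with $y,c\in\mf s(-d)$ and $a,b\in\mf s(d-1)$, a direct computation using \eqref{eq::defofg:1} and the invariance of $(\_|\_)$ shows that $B_d(u,v)$ is a signed sum of $([e,y]|b)$ and $([e,c]|a)$: the $\mf s$-part $[y,a]\in\mf s(-1)$ is killed by $\chi^e$, and the central piece $[\ov c,\ov b]\propto (c|b)z$ vanishes since $(c|b)=0$ by the degree argument above. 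Left non-degeneracy of $B_d$ on each parity component then reduces to injectivity of $\text{ad}\,e:\mf s(-d)\to\mf s(-d+1)$ for $d\geq 1$. Neatness of $e$ ensures that $\mf s$ decomposes as a direct sum of finite-dimensional irreducible $\mf{osp}(1|2)$-modules under the adjoint action of $\langle F,f,h,e,E\rangle$, and in each such irreducible $V(n)$ the odd raising operator $\text{ad}\,e$ is injective on every weight space strictly below the (non-negative) highest weight $n$, yielding the desired injectivity. Standard linear algebra then produces the required dual bases; condition (2) for $d_i\neq d_j$ is vacuous since $[u_i,x_j]\in\g(d_j-d_i-1)\neq\g(-1)$.

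For \eqref{eq::22}-\eqref{eq::24}, since $z$ is central in $U(\g)$ its image $\ov z\in\mc W_{\chi^e}$ is central. Under the equivalence $\text{Wh}_{\chi^e}$, the subcategory $U^c(\g)\text{-WMod}^{\chi^e}$ of objects on which $z$ acts as $c$ corresponds to the full subcategory of $\mc W_{\chi^e}$-modules on which $\ov z$ acts as $c$, i.e., to $\mc W_{\chi^e}/(\ov z-c)\mc W_{\chi^e}\text{-Mod}$. A standard flatness argument (noting that $Q_{\chi^e}$ is free over $\C[z]$ by PBW, since $z\in\g(0)$) combined with the map $\psi_e$ from \eqref{eq:psie} identifies this quotient ring with $\mc{SW}_c(\mf s,e)$, completing the proof. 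The main obstacle is the explicit left non-degeneracy verification for $B_d$ and the parity-respecting construction of dual bases, both resting on the $\mf{osp}(1|2)$-representation theory afforded by the neatness of $e$.
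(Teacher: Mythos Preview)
Your proof is correct and follows essentially the same approach as the paper: both verify the hypotheses of Theorem~\ref{thm1} by using the injectivity of $\ad e$ on negative $h$-degrees (from $\mf{osp}(1|2)$-representation theory) together with non-degeneracy of the form to produce dual bases, and both pass to level $c$ by identifying $\mc{SW}_c(\mf s,e)\cong\mc W_{\chi^e}/(\ov z-c)\mc W_{\chi^e}$. The paper packages the first step via the odd form $(\_|\_)'$ and the invariance identity $\chi^e([u_i,x_j])=(e|[u_i,x_j])'=([e,u_i]|x_j)'$, and carries out the second step by applying the exactness of $\text{Wh}_{\chi^e}$ (just established) to $0\to(z-c)Q_{\chi^e}\to Q_{\chi^e}\to Q^c_{\chi^e}\to 0$, which is precisely what your flatness sketch amounts to.
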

\begin{proof} First, we shall use Theorem \ref{thm1} to give a proof of the equivalences \eqref{eq::21}, \eqref{eq::23}.
	Let $\{u_1,\ldots,u_m\}$ be a basis for $\mf m$ such that $$\{u_1,\ldots,u_{m'}\}\subseteq \mf m_\oa,~\{u_{m'+1},\ldots, u_{m}\}\subseteq \mf m_\ob,$$ are homogeneous with respect to   the $\Z$-grading. 
	It suffices to give a set of homogeneous elements $\{x_i\}_{i=1}^m$ as in \eqref{eq::552} which satisfies the Conditions $(1)$ and $(2)$ in Theorem \ref{thm1}. Since the adjoint action $\ad e: \mf m\rightarrow \mf s+\ov{\mf s}$ is injective, the vectors   $\{[e,u_i]\}_{i=1}^m$ are linearly independent. 
	
We note that the restriction $(\_|\_)': \mf s\otimes \Lambda(\theta)\times \mf s\otimes \Lambda(\theta)\rightarrow \C$ provides    non-degenerate parings $(\_|\_)'|_{\mf s(p)\times\ov{\mf s}(-p)}$ and $(\_|\_)'|_{ \ov{\mf s}(p)\times \mf s(-p)}$, respectively, for any $p\in \Z$ such that $\mf s(p)\neq 0$, and we have $(\mf s(p)\otimes \Lambda(\theta)|~\mf s(q)\otimes \Lambda(\theta))'=0$ for $p+q\neq 0$. This implies that  $\chi^e(\mf g(i)) =0$, for any $i\leq -2$. In addition, there exists a set of homogeneous elements $\{x_i\}_{i=1}^m\subseteq \mf s\otimes \Lambda(\theta)$ with respect to both $\Z_2$- and $\Z$-gradings such that ${\chi^e}([u_i,x_j]) = (e|[u_i,x_j])' = ([e,u_i]|x_j)' =\delta_{ij}$, for any $1\leq i,j\leq m$.  	Finally, we observe that $[u_i,x_i]\in   \mf m(-1)$. Therefore, the conclusion follows by \mbox{Theorem \ref{thm1}}.  

To establish equivalences \eqref{eq::22}, \eqref{eq::24}, it suffices to show that $\psi_e: \mc W_{\chi^e}\rightarrow \mc{SW}_c(\mf s,e)$ in \eqref{eq:psie} is an isomorphism. We note that the generalized Gelfand-Graev module $Q_{\chi^e}$ is an object in $\g$-WMod$^{\chi^e}$. Using the identifications $\mc W_{\chi^e} = \text{Wh}_{\chi^e}(Q_{\chi^e})$ and $\mc{SW}_c(\mf s,e) = \text{Wh}_{\chi^e}(Q^c_{\chi^e})$, the homomorphism $\psi_e$ can be identified as the image of the canonical quotient $Q_{\chi^e}\rightarrow Q^c_{\chi^e}$ under the Whittaker functor $\text{Wh}_{\chi^e}(\_)$. Since \eqref{eq::21} is an equivalence, we obtain a short exact sequence
\begin{align*}
&0\rightarrow \text{Wh}_{\chi^e}((z-c)Q_{\chi^e}) \rightarrow \mc W_{\chi^e} \xrightarrow{\psi_e} \mc{SW}_c(\mf s,e)  \rightarrow 0.
\end{align*} This implies that $\psi_e$ is surjective with kernel $\ker(\psi_e) =\text{Wh}_{\chi^e}((z-c)Q_{\chi^e}).$ Finally, it remains to prove that $\ker(\psi_e) = (z-c)\mc W_{\chi^e}$. To see this, we let $$\iota:  (z-c)\text{Wh}_{\chi^e}(Q_{\chi^e}) \hookrightarrow \text{Wh}_{\chi^e}( (z-c)Q_{\chi^e})$$ be the natural inclusion of $\mc W_{\chi^e}$-modules with cokernel $K$. Applying the functor $Q_{\chi^e}\otimes_{\mc W_{\chi^e}}(\_)$,  we have an  exact sequence of $\g$-modules:
\begin{align*}
&Q_{\chi^e}\otimes_{\mc W^{\chi^e}}(z-c)\text{Wh}_{\chi^e}(Q_{\chi^e}) \xrightarrow{\iota'}     Q_{\chi^e}\otimes_{\chi^e}  \text{Wh}_{\chi^e}((z-c)Q_{\chi^e}) \rightarrow Q_{\chi^e}\otimes_{\mc W_{\chi^e}} K\rightarrow   0
\end{align*} Using the equivalences in \eqref{eq::21} and \eqref{eq::23}, we conclude that the first and second terms above are naturally isomorphic to $(z-c)Q_{\chi^e}$ so that $\iota'$ turns out to be the identity map on this module. Consequently, we have $K=0$. This completes the proof.
\end{proof}

 Recall that an odd nilpotent element $e$ is called {\em principal nilpotent} in $\mf s$, if $E=e^2$ is even principal nilpotent in $\mf s_\oa$. It is well-known that, for a finite dimensional semisimple Lie algebra, a principal nilpotent element is characterized by the property that the dimension of its centralizer coincides with the rank of the Lie algebra. For an even principal nilpotent element $E$ in a basic Lie superalgebra that is not a Lie algebra this same property cannot characterize $E$. However, we have the following same characterization for a neat odd nilpotent element in a basic Lie superalgebra. Recall that the rank of a basic classical Lie superalgebra is the rank of its even subalgebra.

 \begin{prop}\label{prop:rk=dimcen}
Let $\mf s$ be a basic classical Lie superalgebra and suppose that $e$ is an odd neat nilpotent element in $\mf s$. Then $e$ is principal nilpotent if and only if the dimension of the centralizer of $e$ in $\mf s$ equals the rank of $\mf s$.
\end{prop}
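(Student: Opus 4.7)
The approach is to exploit the $\mathfrak{osp}(1|2)$-subalgebra $\langle F,f,h,e,E\rangle\subseteq\mf s$, which exists by neatness of $e$ via \cite[Theorem 4.2.1]{ES22}, together with the fact that every finite-dimensional $\mathfrak{osp}(1|2)$-module is completely reducible. Decomposing the adjoint representation I write
\[
\mf s=\bigoplus_{i}L(n_i)^{\epsilon_i},
\]
where $L(n)$ denotes the $(2n+1)$-dimensional irreducible $\mathfrak{osp}(1|2)$-module and $\epsilon_i\in\Z_2$ records the parity of its top $h$-weight vector. In each summand $\ker(\ad e)$ is the one-dimensional top weight line, so $\dim\mf s^e$ equals the total number of summands. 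Under $\mathfrak{sl}_2=\langle E,h,F\rangle$, one has $L(n)\cong V(n)\oplus V(n-1)$, and exactly one of these two irreducibles lies in the even part $(L(n)^\epsilon)_\oa$ (which is zero precisely when $n=0$ and $\epsilon=\ob$).

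Setting
\[
a:=\#\{i:n_i\geq 1\},\qquad b:=\#\{i:n_i=0,\;\epsilon_i=\oa\},\qquad c:=\#\{i:n_i=0,\;\epsilon_i=\ob\},
\]
I read off $\dim\mf s^e=a+b+c$, while $\mf s_\oa$ is a direct sum of exactly $a+b$ irreducible $\mathfrak{sl}_2$-summands, each contributing one dimension to $\ker(\ad E)$, so $\dim\mf s_\oa^E=a+b$. Since $\mathrm{rank}(\mf s)=\mathrm{rank}(\mf s_\oa)$, and since for any nilpotent element of a reductive Lie algebra one has $\dim\mf s_\oa^E\geq\mathrm{rank}(\mf s_\oa)$ with equality precisely when $E$ is principal, the condition that $e$ is principal translates into $a+b=\mathrm{rank}(\mf s)$, while the condition $\dim\mf s^e=\mathrm{rank}(\mf s)$ becomes $a+b+c=\mathrm{rank}(\mf s)$.

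The direction $(\Leftarrow)$ is then immediate: from $a+b+c=\mathrm{rank}(\mf s)$ together with $a+b\geq\mathrm{rank}(\mf s)$ and $c\geq 0$, one forces $a+b=\mathrm{rank}(\mf s)$ and $c=0$, so $E$ is principal in $\mf s_\oa$, i.e., $e$ is principal. For the direction $(\Rightarrow)$, the hypothesis that $e$ is principal yields $a+b=\mathrm{rank}(\mf s)$ for free, and the remaining content of the statement is the vanishing $c=0$, equivalently, that no nonzero odd element of $\mf s$ is annihilated by the entire $\mathfrak{osp}(1|2)$.

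The main obstacle is precisely this vanishing. Any such odd $\mathfrak{osp}(1|2)$-invariant $x$ must satisfy $[h,x]=0$, so $x$ lies in the weight-zero part of $\mf s_\ob$ under the principal semisimple element $h$ (regular in $\mf s_\oa$), and additionally it must be annihilated by $\ad e$ and $\ad f$. My plan is to rule this out by combining the classification and explicit construction of odd $\mathfrak{osp}(1|2)$-triples in basic classical Lie superalgebras in \cite{ES22} with the faithful-module criterion for neatness of \cite[Lemma 6.5.10]{ES22}, performing a short case analysis over the basic classical types admitting an odd neat principal nilpotent to verify the absence of nonzero odd $\mathfrak{osp}(1|2)$-invariants (the key input being that, in each such case, the weights of the principal semisimple element $h$ on the odd part $\mf s_\ob$ have a definite parity preventing a zero weight from appearing).
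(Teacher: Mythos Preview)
Your proposal is correct and follows essentially the same strategy as the paper. Both arguments decompose $\mf s$ under the $\mathfrak{osp}(1|2)$-subalgebra $\langle F,f,h,e,E\rangle$, use that each nontrivial irreducible contributes exactly one $\mf{sl}_2$-summand to $\mf s_\oa$, and reduce the $(\Rightarrow)$ direction to showing that $\mf s_\ob$ contains no trivial summand; the $(\Leftarrow)$ direction is handled identically via the inequality $\dim\mf s_\oa^E\ge\mathrm{rank}\,\mf s_\oa$. The only cosmetic differences are that the paper cites \cite{FRS92} (rather than \cite{ES22}) for the list of basic classical $\mf s$ admitting a principal $\mathfrak{osp}(1|2)$-embedding, and verifies the absence of odd invariants by writing $\mf s_\ob$ explicitly as a tensor product of $\mf{sl}_2$-modules and applying Clebsch--Gordan, whereas you phrase the same verification as ``all $h$-weights on $\mf s_\ob$ are odd''; these amount to the same computation.
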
 

\begin{proof}
    Suppose that $e$ is principal nilpotent. Then $E=e^2$ is principal nilpotent in $\mf s_\oa$, and hence the centralizer of $E$ in $\mf s_\oa$ equals the rank of $\mf s_\oa$. Since $e$ is neat, we have an embedding of $\mf{osp}(1|2)=\langle E, e, h, f, F\rangle$ inside $\mf s$. From the classification of such embeddings of $\mf{osp}(1|2)$ into $\mf s$ in \cite[Section 10]{FRS92} we only have the following possibilities for $\mf s$: $\gl(n|n+1)$, $\gl(n+1|n)$, $\mf{sl}(n|n+1)$, $\mf{sl}(n+1|n)$, $\mf{osp}(2n\pm1|2n)$, $\mf{osp}(2n|2n)$, $\mf{osp}(2n+2|2n)$, $n\ge 1$, and $D(2,1|\alpha)$. Now, recall that every finite dimensional $\mf{osp}(1|2)$-module is completely reducible, and furthermore every finite-dimensional non-trivial irreducible module restricted to $\mf a=\langle E, h, F\rangle\cong \mf{sl}(2)$ is a direct sum of two irreducible modules of opposite parity with highest weights differing by $1$. Let us denote the irreducible $\mf a$-module of highest weight $k\ge 0$ by $L(k)$. Since $E$ is principal nilpotent in $\mf s_\oa$, $\mf s_\oa$ decomposes into $\text{rank}\mf s_\oa$  irreducible $\mf a$-modules. Hence to prove that the centralizer of $e$ in $\mf s$ equals the rank of $\mf s_\oa$, it suffices to prove that $\mf s_\ob$, as an $\mf a$-module, does not contain any trivial $\mf a$-submodule.
    
    Without loss of generality, we may assume that $h=2\rho^\vee_\oa$ since $E$ is principal nilpotent. Using this, we can easily compute the $\mf a$-highest weights of the natural modules of the simple Lie algebra of types $A,B,C,D$ at $2\rho^\vee_\oa$. From this calculation, we conclude that  we have the following isomorphism of $\mf a$-modules: For $\mf s$ of type $A$ in the list above $\mf s_\ob\cong\left( L(n)\otimes L(n-1)\right)\oplus \left(L(n)\otimes L(n-1)\right)$. For $\mf s=\mf{osp}(2n\pm 1|2n)$, we have $\mf s_\ob\cong L(2n\pm 1-1)\otimes L(2n-1)$, while for $\mf s=\mf{osp}(2n+2|2n)$ and $\mf s=\mf{osp}(2n|2n)$,  we have $\mf s_\ob\cong \left(L(2n)\oplus L(0)\right)\otimes L(2n-1)$ and $\mf s_\ob\cong \left(L(2n-2)\oplus L(0)\right)\otimes L(2n-1)$, respectively. Finally, for $\mf s=D(2|1,\alpha)$ we have $\mf s_\ob\cong \left( L(2)\oplus L(0)\right)\otimes L(1)$. It follows that $\mf s_\ob$ has no $\mf a$-invariants, and hence the dimension of the centralizer of $e$ in $\mf s$ equals the rank of $\mf s$.

    On the other hand, if the dimension of the centralizer of $e$ in $\mf s$ equals the rank of $\mf s_\oa$, then by the finite-dimensional representation theories of $\mf{osp}(1|2)$ and $\mf{sl}(2)$, the dimension of the centralizer of $E$ on $\mf s_\oa$ is at most the rank of $\mf s_\oa$. But then the dimension of the centralizer of $E$ on $\mf s_\oa$ must be exactly the rank of $\mf s_\oa$ and so $E$ must be principal nilpotent in $\mf s_\oa$. This, however,  means precisely that $e$ is principal nilpotent in $\mf s$.
\end{proof}

\subsection{Proof of Theorem \ref{thm::2}} \label{sect::43}
Now, suppose that $e$ is a neat principal nilpotent element in $\mf s$. In this case, the subalgebra $\mf m$ from Subsection \ref{sec:finite:SUSY} is of the form $\mf m=\mf n\oplus \ov{\mf n}$ for some nilradical $\mf n$ of a Borel subalgebra $\mf b$ of $\mf s$. Set  $\chi$ to be the restriction of $\chi^e$ to $\hat{\mf n}_\oa$. Recall the associated character $\zeta = \zeta_\chi: \mf n_\oa\rightarrow \C$  
 from Subsection \ref{sect::111}:
 \begin{align*}  &\zeta(X):=  \left\{\begin{array}{ll} \frac{-(\alpha_1|\alpha_2)}{c} (e|X_1)(e|X_2), \\ \text{\qquad if $X=[X_1,X_2]$ for some $X_i\in \mf s^{\alpha_i}$ with $\alpha_i \in \Delta_\ob$};\\ 	0,  ~~~ \text{otherwise}. 
	\end{array} \right.   \end{align*}
 Then for any  non-zero complex number $c$, the category  $\g\text{-Wmod}^{\chi}_c$ is equal to the full subcategory of $\g\text{-WMod}^{\chi^e}$ consisting of finitely-generated $\g$-modules which are annihilated by $z-c$. To see this, let $M\in \g\text{-Wmod}^{\chi}$ and $X\in \mf m_\ob$. Since $\chi^e$ is a character of $\mf m$,  it follows that  $2X^2= [X,X] = [X,X]-\chi([X,X])$ acts on $M$ locally nilpotently.  This proves the claim. Recall that $\mc{SW}_c(\mf s,e)$-mod denotes the category of all finitely generated $\mc{SW}_c(\mf s,e)$-modules. Then, combined with Theorem \ref{thm::1}, we obtain Theorem \ref{thm::2} which we restate below.
\begin{thm} \label{thm::2::restt}
	The following functor defines an equivalence of categories 
		\begin{align*}
		&\emph{Wh}_{\chi^e} \circ  (\_\otimes \mf F_c^\eta): \mf s\text{-}\emph{Wmod}^{\zeta}\rightarrow \mc{SW}_e(\mf s,e)\mod,
		\end{align*}  with quasi-inverse  $  \Hom_{\mf c}(\mf F_c^\eta,\_) \circ Q_{\chi^e}$.  
\end{thm}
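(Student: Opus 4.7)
The strategy is to factor the asserted functor as the composition of two already-established equivalences. Theorem \ref{thm::1::restt} supplies the equivalence $F := \_\otimes\mf F_c^\eta : \mf s\text{-Wmod}^{\zeta}\rightarrow \g\text{-Wmod}^{\chi}_c$ with quasi-inverse $G := \Hom_{\mf c}(\mf F_c^\eta,\_)$, while the Skryabin-type equivalence \eqref{eq::22}--\eqref{eq::24} supplies $\text{Wh}^c_{\chi^e} : U^c(\g)\text{-WMod}^{\chi^e}\rightarrow \mc{SW}_c(\mf s,e)\text{-Mod}$ with quasi-inverse $Q^c_{\chi^e}\otimes_{\mc{SW}_c(\mf s,e)}(\_)$. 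Composing the two will give the result, provided the finite-generation conditions and the ambient Whittaker categories are reconciled.

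The first step is to verify the identification flagged in the paragraph just before the theorem, namely that $\g\text{-Wmod}^{\chi}_c$ coincides with the full subcategory of $U^c(\g)\text{-WMod}^{\chi^e}$ consisting of finitely-generated modules. One inclusion is immediate from the definitions. For the other, given $M\in\g\text{-Wmod}^{\chi}_c$, only the local nilpotency of $X-\chi^e(X)$ for odd $X\in\mf m_\ob$ needs checking. Since $\chi^e$ is even, $\chi^e(X)=0$; since $\chi^e$ is a character of $\mf m$, $\chi^e([X,X])=0$; and $2X^2=[X,X]\in\mf m_\oa$, on which $[X,X]-\chi([X,X])$ acts locally nilpotently by hypothesis. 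Hence $X$ itself acts locally nilpotently on $M$, proving the identification.

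Next, I would check that $\text{Wh}^c_{\chi^e}$ and its quasi-inverse restrict between the finitely-generated subcategories. Because $Q^c_{\chi^e}$ is cyclic over $U^c(\g)$, if $N$ is finitely generated over $\mc{SW}_c(\mf s,e)$ then $Q^c_{\chi^e}\otimes_{\mc{SW}_c(\mf s,e)} N$ is finitely generated over $U^c(\g)$. Conversely, if $M\in\g\text{-Wmod}^{\chi}_c$ is finitely generated, the natural isomorphism $M\cong Q^c_{\chi^e}\otimes_{\mc{SW}_c(\mf s,e)}\text{Wh}^c_{\chi^e}(M)$ supplied by the equivalence, combined with the exactness of the quasi-inverse, transports a finite $U^c(\g)$-generating set of $M$ back to a finite $\mc{SW}_c(\mf s,e)$-generating set of $\text{Wh}^c_{\chi^e}(M)$ (any ascending chain in $\text{Wh}^c_{\chi^e}(M)$ would yield an ascending chain in $M$).

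Finally, the composition $\text{Wh}^c_{\chi^e}\circ F : \mf s\text{-Wmod}^{\zeta}\rightarrow \mc{SW}_c(\mf s,e)\text{-mod}$ is then an equivalence, with quasi-inverse $G\circ\bigl(Q^c_{\chi^e}\otimes_{\mc{SW}_c(\mf s,e)}\_\bigr)$, matching the description in the statement. No step is genuinely hard given the earlier results; the only subtlety worth emphasizing is the first one, since the compatibility of the two notions of Whittaker category (one for $\mf m_\oa$ only, one for all of $\mf m$) is what allows the two equivalences to be chained at all.
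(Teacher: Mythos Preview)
Your proposal is correct and follows essentially the same approach as the paper: the paper also deduces the theorem by composing Theorem~\ref{thm::1::restt} with the Skryabin-type equivalence, after first verifying (with exactly your $2X^2=[X,X]$ argument) that $\g\text{-Wmod}^{\chi}_c$ coincides with the finitely-generated part of $U^c(\g)\text{-WMod}^{\chi^e}$. The only difference is that you spell out the preservation of finite generation under the Skryabin equivalence more explicitly than the paper does.
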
 
The principal finite $W$-superalgebra  $U(\mf s,E)$ associated to the even nilpotent element $E$ can be defined in the same fashion (see, e.g., \cite[Subsection 4.1]{CC23_2}), that is,  $U(\mf s,E)$ is the opposite algebra of the endomorphism algebra of the generalized Gelfand-Graev $\mf s$-module $U(\mf s)\otimes_{U(\mf n)} \C_{\chi^E}$, where $\chi^E$ is the following analogous nilcharacter: $\chi^E(\_):=(E|\_):\mf n\rightarrow \C$.

Theorem \ref{thm::2::restt} implies the following direct connection between the representation theories of the two $W$-superalgebras $\mc{SW}_c(\mf s,e)$ and $U(\mf s,E)$. 
\begin{cor}\label{Cor::23}
For each non-zero $c$, the category $\mc{SW}_c(\mf s,e)$-mod is equivalent to the category of finitely-generated modules over the principal finite $W$-superalgebra $U(\mf s, E)$. In particular, $\mc{SW}_c(\mf s,e)\text{-mod}\cong\mc{SW}_{c'}(\mf s,e)\text{-mod}$, for $c,c'\not=0$.
\end{cor}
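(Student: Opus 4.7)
The plan is to combine Theorem \ref{thm::2::restt} with the Skryabin-type equivalence for finite $W$-superalgebras associated to even nilpotent elements, which, applied to the principal even nilpotent $E=\tfrac12[e,e]\in\mf s_\oa$ (see \cite[Theorem 4.1]{SX20}), yields
\[ U(\mf s,E)\text{-mod}\,\simeq\, \mf s\text{-Wmod}^{\chi^E},\]
where $\chi^E(x):=(E|x)$ is the nilcharacter on $\mf n_\oa$ attached to $E$. On the other hand, Theorem \ref{thm::2::restt} immediately provides
\[ \mc{SW}_c(\mf s,e)\text{-mod}\,\simeq\, \mf s\text{-Wmod}^{\zeta},\]
with $\zeta=\zeta_\chi:\mf n_\oa\to\C$ the character defined by \eqref{eq::14}.

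It therefore suffices to exhibit an equivalence $\mf s\text{-Wmod}^{\zeta}\simeq \mf s\text{-Wmod}^{\chi^E}$. To this end, I argue that both $\zeta$ and $\chi^E$ are regular characters of $\mf n_\oa$, i.e.,~non-zero on every simple even root vector of $\mf s_\oa$. Regularity of $\chi^E$ is classical since $E$ is principal. For $\zeta$, the point is that in the triangular decomposition compatible with the principal $\mf{osp}(1|2)$-subalgebra $\langle F,f,h,e,E\rangle$ (namely, the one for which $\mf m=\mf n\otimes\Lambda(\theta)$), each simple even root vector $X_\alpha$ of $\mf s_\oa$ arises as a bracket $[X_{\beta_1},X_{\beta_2}]$ with $\beta_1,\beta_2\in\Delta_\ob$; substituting into \eqref{eq::14} and using the non-degeneracy of $(\_|\_)$ on the relevant odd root spaces then yields $\zeta(X_\alpha)\neq 0$. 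Since any two regular characters of $\mf n_\oa$ are conjugate via an inner automorphism coming from the adjoint torus action (cf.~the discussion in Subsection \ref{sect::221}), this transports $\mf s\text{-Wmod}^{\zeta}$ to $\mf s\text{-Wmod}^{\chi^E}$.

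The main obstacle in the argument is precisely this regularity statement for $\zeta$ together with the matching to $\chi^E$ up to a Cartan rescaling, which will require a case-by-case verification over the list of basic classical Lie superalgebras admitting a principal odd nilpotent (cf.~Proposition \ref{prop:rk=dimcen}). Granted this, the chain
\[ \mc{SW}_c(\mf s,e)\text{-mod}\,\simeq\, \mf s\text{-Wmod}^{\zeta}\,\simeq\, \mf s\text{-Wmod}^{\chi^E}\,\simeq\, U(\mf s,E)\text{-mod}\]
proves the first assertion, and the in-particular statement is immediate, since the rightmost category has no dependence on the level $c$.
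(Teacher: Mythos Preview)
Your proposal is correct and follows essentially the same route as the paper: reduce via Theorem \ref{thm::2::restt} and the even Skryabin equivalence to proving that $\zeta$ is a regular character of $\mf n_\oa$, then verify regularity by writing each even simple root vector as a bracket $[X_{\beta_1},X_{\beta_2}]$ with $\beta_1,\beta_2\in\Delta_\ob$ and checking the formula \eqref{eq::14} does not vanish. The paper carries this out explicitly, citing \cite[Proposition 36]{CC23_2} for the passage from regularity of $\zeta$ to the equivalence with $U(\mf s,E)$-mod, and invoking the classification of principal $\mf{osp}(1|2)$-embeddings from \cite{FRS92} to confirm that in the relevant Borel every even simple root is indeed a sum of odd simple roots (with $(\alpha_1|\alpha_2)\neq 0$); your ``non-degeneracy on odd root spaces'' does not by itself yield $(\alpha_1|\alpha_2)\neq 0$, so this case check is where the actual content lies.
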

\begin{proof}
Recall that we fix root vectors $E_\alpha, F_\alpha\in \mf s^\alpha$, for $\alpha\in \Phi^+$. Since $e$ is principal nilpotent, we can write $e=\sum_{\alpha\in \Delta_\ob} c_\alpha F_\alpha$, for some non-zero scalars $c_\alpha$ and $E$ is principal nilpotent. It follows by  \cite[Proposition 36]{CC23_2} and Theorem \ref{thm::2::restt}  that we are left to show that $\zeta: \mf n_\oa\rightarrow \C$ is a regular character of $\mf n_\oa$ in the sense of Kostant \cite{Ko78}, that is, $\zeta$ does not vanish on any simple root vector in $\mf n_\oa$. To see this, let $\alpha_1, \alpha_2\in \Delta_\ob$ be such that $\alpha_1+\alpha_2$ is also a root. We calculate \begin{align*}
&\zeta([E_{\alpha_1}, E_{\alpha_2}]) = \frac{-(\alpha_1|\alpha_2)}{c}(c_{\alpha_1}F_{\alpha_1}|E_{\alpha_1})(c_{\alpha_2}F_{\alpha_2}|E_{\alpha_2})= \frac{-(\alpha_1|\alpha_2)c_{\alpha_1}  c_{\alpha_2}}{c} \neq 0.
\end{align*} From the classification of embeddings of $\mf{osp}(1|2)$ into a basic classical Lie superalgebra such that $e$ is principal nilpotent in \cite[Section 10]{FRS92}, we conclude that in these cases every even root which is simple in $\Phi_\oa^+$ is either a sum of two different isotropic odd simple roots or else is twice the same non-isotropic simple odd root. Consequently, $\zeta$ is regular. This completes the proof.
\end{proof}

\appendix
\section{Proof of Theorem \ref{thm1}} \label{sect::append}
This section is devoted to providing a proof of Theorem \ref{thm1}, which is an adaptation of Skryabin's original proof in \cite{Skr} to our setting; see also \cite[Section 5]{CC23_2} for a similar version of  finite $W$-(super)algebras associated with  Lie superalgebras from Kac's list \cite{K77}.  In this subsection, we retain the notations and assumptions of Subsection \ref{sect::41}.

 For  ${\bf a} = (a_1,\ldots,a_m)\in X:=\mathbb Z_{\geq 0}^{m'}\times \{0,1\}^{m-m'}$, we define vector $u^{\bf a}\in U(\mf m)$: \begin{align*} 	&u^{\bf a} = (u_1-\varphi(u_1))^{a_1}(u_2-\varphi(u_2))^{a_2}\cdots (u_{m'}-\varphi(u_{m'}))^{a_{m'}} u_{m'+1}^{a_{m'+1}}\cdots u_{m}^{a_m}. \end{align*}

Furthermore, define the following notations 
\begin{align*}
	&|{\bf a}| := \sum_{s} a_s,~\wt{\bf a }:= \sum_{s}d_sa_s\in \Z_{\geq 0},~\text{and } x^{\bf a} = x_1^{a_1}x_2^{ a_2}\cdots x_m^{ a_m}\in U(\mf a).  
\end{align*}
Consider any linear ordering $<$ on $X$ subject to the condition \begin{align*}
	&{\bf a}< {\bf b} \text{ whenever either }\wt {\bf a} <\wt {\bf b}  \text{ or } \wt {\bf a} = \wt {\bf b}, |{\bf a}| >|{\bf b}|.
\end{align*}
Our first goal is to show the following lemma:

\begin{lem} \label{lem::3}Suppose that Conditions $(1),(2)$ and $(3)$ in Theorem \ref{thm1} are satisfied. Then for any $M\in  \mf a$-\emph{Wmod}$^\varphi$, we have 
 \begin{align}
	&u^{\bf a} x^{\bf a} v =cv,\text{ for some non-zero  scalar $c$;}\label{eq::66}\\
	&u^{\bf a}x^{\bf b} v= 0, \text{ when }{\bf a} > {\bf b}, \label{eq::67}
\end{align}
for any $v\in {\emph{Wh}}_\varphi(M)$.  
\end{lem}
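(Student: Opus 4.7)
My plan is to prove \eqref{eq::66} and \eqref{eq::67} simultaneously by induction on $\mathbf{a}$ with respect to the given ordering on $X$; the base case $\mathbf{a}=0$ is immediate, as $u^0 x^0 v = v$ witnesses \eqref{eq::66} with $c=1$ and \eqref{eq::67} is vacuous (since $\wt 0 = 0$ is minimal). For the inductive step I fix $\mathbf{a}\ne 0$, pick an index $i_0$ with $a_{i_0}>0$, and inside $U(\mf m)$ bring one factor of $\tilde u_{i_0} := u_{i_0} - \varphi(u_{i_0})$ to the right of $u^{\mathbf{a}}$ to obtain
\[
u^{\mathbf{a}} = u^{\mathbf{a}-e_{i_0}}\tilde u_{i_0} + R,
\]
where $R$ is a linear combination of PBW monomials $u^{\mathbf{c}}$ of the same weight $\wt \mathbf{c}=\wt\mathbf{a}$ but strictly smaller size $|\mathbf{c}|<|\mathbf{a}|$, arising from the brackets $[u_{i_0}, u_k]\in \mf m(-d_{i_0}-d_k)$. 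These correction contributions $R\cdot x^{\mathbf{b}} v$ fit into a simultaneous sub-induction within each (finite) fixed-weight slice, organised as reverse induction on $|\cdot|$.

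The main step is then to compute $\tilde u_{i_0}x^{\mathbf{b}}v$ by sliding $\tilde u_{i_0}$ rightward past each $x_j$ in $x^{\mathbf{b}}$: the fully-commuted term $\pm x^{\mathbf{b}}\tilde u_{i_0}v$ vanishes as $\tilde u_{i_0} v=0$, and what remains is a sum over positions of products interlaced by exactly one commutator $[u_{i_0},x_j]\in \mf a(-1+d_j-d_{i_0})$. The analysis splits by the sign of $d_j-d_{i_0}$, and it is here that Conditions~(1)--(3) are used: if $d_j=d_{i_0}$, then $[u_{i_0},x_j]\in \mf m(-1)$ with $\varphi([u_{i_0},x_j])=\delta_{i_0 j}$ by Conditions~(1)--(2), producing the key diagonal contribution $u^{\mathbf{a}-e_{i_0}}x^{\mathbf{b}-e_{i_0}}v$ precisely when $j=i_0$; if $d_j<d_{i_0}$, then $[u_{i_0},x_j]\in \bigoplus_{i\le -2}\mf a(i)$ has $\varphi$-value zero by Condition~(3), and a Jacobi-based iteration shows that every subsequent bracket with a further $x_k$ remains in $\bigoplus_{i\le -2}\mf a(i)$, so all Whittaker-values collapse; and if $d_j>d_{i_0}$, the bracket temporarily escapes $\mf m$ into $\mf a_{\ge 0}$, but any further commutation with an element of $\mf m$ returns it into $\bigoplus_{i\le -2}\mf a(i)$ and is killed by Condition~(3).

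Combining these, the only surviving contribution to $u^{\mathbf{a}}x^{\mathbf{b}}v$ is the iterated diagonal one: when $\mathbf{a}=\mathbf{b}$ it accumulates to $\bigl(\prod_s a_s!\bigr)v$, establishing \eqref{eq::66}; when $\mathbf{a}>\mathbf{b}$ the diagonal chain is blocked (either $b_{i_0}=0$ stops the reduction outright, or else the shift $(\mathbf{a},\mathbf{b})\mapsto(\mathbf{a}-e_{i_0},\mathbf{b}-e_{i_0})$ preserves the strict inequality and the inductive hypothesis applies to produce $0$), establishing \eqref{eq::67}. The main obstacle is the ``ascent'' case $d_j>d_{i_0}$ in which the commutator leaves $\mf m$: verifying that the ensuing chain of brackets (unfolded via Jacobi) eventually lands in $\bigoplus_{i\le -2}\mf a(i)$ so that Condition~(3) can close the calculation requires careful degree bookkeeping, and this is the delicate point of the argument.
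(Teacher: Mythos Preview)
Your high-level strategy---peel off one factor $\tilde u_{i_0}$ and commute it through $x^{\mathbf b}$, then recurse---matches the paper's. But two parts of your execution do not go through as written.

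First, your commutator bookkeeping in the off-diagonal cases is incorrect. In the descent case $d_j<d_{i_0}$ you assert that ``every subsequent bracket with a further $x_k$ remains in $\bigoplus_{i\le -2}\mf a(i)$''. This is false: with $y:=[u_{i_0},x_j]\in\mf a(-1+d_j-d_{i_0})$ (degree $\le -2$), the next bracket $[y,x_k]$ lies in $\mf a(\deg y -1 + d_k)$, and for $d_k$ large this can land in $\mf m(-1)$ or even in $\mf a_{\ge 0}$, so Condition~(3) does not kill it. The same problem undermines your treatment of the ascent case $d_j>d_{i_0}$: a single commutation with an element of $\mf m$ sends $[u_{i_0},x_j]\in\mf a_{\ge 0}$ into $\mf a(\ge -1-d_k)$, which need not be $\le -2$. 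In both situations the surviving terms are not scalars on $v$; they are products of the form $r_1\cdots r_\ell v$ with $r_s\in\mf a(-1+i_s)$, $i_s>0$, of \emph{smaller total weight}, and one must recurse on these. The paper makes this precise by introducing the filtration $M_{i,j}$ (spanned by such products with $\sum i_s\le i$ and length $\ge j$) and proving, by induction on the length $\ell$, the single estimate $(y-\varphi(y))M_{i,j}\subseteq M_{i-d,j-1}+M_{i-d-1,0}$ for $y\in\mf m(-d)$. This absorbs the ascent, descent and equal cases uniformly, and the vanishing when $\mathbf a>\mathbf b$ then follows immediately from $u^{\mathbf a}M_{i,j}\subseteq M_{i-\wt\mathbf a,j-|\mathbf a|}+M_{i-\wt\mathbf a-1,0}$.

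Second, your correction term $R$ is an unnecessary complication: the paper takes $i_0=p$ to be the \emph{largest} index with $a_p\ne 0$, so that $u^{\mathbf a}=u^{\mathbf a-e_p}\tilde u_p$ exactly and $R=0$. Your reverse sub-induction on $|\cdot|$ within a weight slice can be made to work, but it is avoidable.
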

\begin{proof} First, we define $M_{0,0}:=\text{Wh}_\varphi(M)$. 
For any $i, j\geq 0$, we also define $M_{i,j}\subseteq M$ as the subspace spanned by all elements $r_1r_2\cdots r_\ell v$ with $v\in \text{Wh}_\varphi(M)$ with
\begin{align}
	&\ell\geq j,~r_1\in \mf a({-1+i_1}), r_2\in \mf a({-1+i_2}), \ldots, r_\ell \in \mf a({-1+i_\ell}), \label{eq::68}\\ &\text{where } i_1, i_2,\ldots, i_\ell>0 \text{ such that } i_1+i_2+\cdots +i_\ell\leq i.\label{eq::69}
\end{align} Define $M_{i,j} = M_{i,0}$ for any $i\geq 0$ and $j<0$.  Let $M_{i,j} = 0$ for any $i<0$. 	Also, we define $M_{0,j}=0$, for $j>0$. By definition, we have $M_{i,j}\subseteq M_{i',j'}$ and $\g({-1+c})M_{i,j} \subseteq M_{i+c,j+1}$, for any $i\leq i'$, $j\geq j'$ and $c>0$.

 Let $y\in \mf m(-d)$ with $d\geq  1$ and $y\in {\mf m}_{\oa}\cup {\mf m}_\ob$. Let  $r_1,\ldots, r_\ell\in \mf a$ be homogeneous elements with respect to both $\Z_2$-grading and $\Z$-grading of $\mf a$. Suppose $v\in \text{Wh}_\varphi(M)$ and $r_1,\ldots, r_\ell, j, \ell$ satisfy the conditions \eqref{eq::68} and \eqref{eq::69}. We claim that: 
 
 {\bf Claim 1}:  {\em There is an element $R\in M_{i-d,j}+M_{i-d-1,0}$ \emph{(}depending on $y,r_1\ldots, r_\ell,v$\emph{)} such that
 \begin{align}
 	&(y-\varphi(y))r_1r_2\cdots r_\ell v = \sum_{1\leq s\leq \ell,~i_s=d} c_s r_1\cdots \widehat{r_s} \cdots r_\ell [y,r_s]v+R,\label{eq::pnSk}\\
 	&(y-\varphi(y))r_1r_2\cdots r_\ell v\in M_{i-d,j-1}+M_{i-d-1,0},\label{eq2::pnSk}
 \end{align}   where the notation $\widehat{r_s}$, as usual, denotes omission of $r_s$, and \[c_s = (-1)^{ p(y)(p({r_1})+\cdots+p({r_{s-1}})+p({r_{s+1}})+\cdots+p({r_\ell}))+p({r_s})(p({r_{s+1}})+\cdots +p({r_\ell}))}.\]} 

 The Claim 1 is an analogue of Claim 3 in the proof of \cite[Theorem 1.3]{Skr2}. We note that the left side in the first equation in \eqref{eq::pnSk} can be  
 written as $(y-\varphi(y))r_1r_2\cdots r_\ell v = [y, r_1r_2\cdots r_\ell]v$ since $yv=\varphi(y)v$ and  $\varphi(\mf m_\ob)=0$. Therefore, to prove \eqref{eq::pnSk} and \eqref{eq2::pnSk}, we shall show that 
 \begin{align}
 &[y, r_1r_2\cdots r_\ell]v =  \sum_{1\leq s\leq \ell,~i_s=d} c_s r_1\cdots \widehat{r_s} \cdots r_\ell [y,r_s]v+R,\label{eq::pnSk2} \\
 &[y, r_1r_2\cdots r_\ell]v= M_{i-d,j-1}+M_{i-d-1,0} \subseteq M_{i-d,0}, \label{eq::9}
 \end{align} for some $R\in M_{i-d,j}+M_{i-d-1,0}$.  
We adapt the arguments in \cite[Thoerem 1.3]{Skr2} to give a proof. First,  we note that $\deg[y,r_s] = -1+i_s-d$.  If  $i_s>d$ then 
\begin{align*}
&r_1\cdots r_{s-1}[y,r_s] r_{s+1}\cdots r_{\ell}v \in M_{i-d,j}.
\end{align*}
Suppose that   $i_s\leq d$. In this case we note that $\deg[y,r_s] <0$.
By induction on $\ell$ in \eqref{eq::9} we get 
\begin{align*}
&r_1\cdots r_{s-1}[[y,r_s],r_{s+1}\cdots r_{\ell}]v \in M_{i_1+\cdots+i_{s-1}+i_{s+1}\cdots+i_\ell  +(-2+i_s-d),0}\subseteq  M_{i -d-1,0}, 
\end{align*} namely, the following element 
\begin{align*}
	&r_1\cdots r_{s-1}[y,r_s]r_{s+1}\cdots r_{\ell}v - (-1)^{(p(y)+p({r_s}))(p({ r_{s+1}})+\cdots+p(r_\ell))} r_1\cdots \hat r_{s}\cdots r_{\ell} [y,r_s] v
\end{align*} lies in $  M_{i -d-1,0}$.

 If $i_s<d$ then $[y,r_s]v = \varphi([y,r_s])v=0$ since $[y,r_s]\in \bigoplus_{i\leq -2}\mf a(i)$. If $i_s=d$ then $[y,r_s]v\in \text{Wh}_\varphi(M)$, and so this case leads to $c_s r_1\cdots \hat r_s \cdots r_\ell [y,r_s]v\in M_{i-d,j-1}$.  This proves the Claim 1.

    Let ${\bf a, b}\neq {\bf 0}$. By Claim 1 we have \[u^{\bf a}M_{i,j} \subseteq M_{i-\wt{\bf a}, j-|{\bf a}|}+M_{i-\wt{\bf a}-1},0,\] for any ${\bf a}\in X$. Now, suppose that $\wt{\bf b}=i$ and $|{\bf b}|=j$, then we have  $x^{\bf b}v\in M_{i,j}$. It follows that  $u^{\bf a}x^{\bf b}v=0$ whenever either $\wt{\bf a} >\wt{\bf b}$ or $\wt{\bf a}=\wt{\bf b}$ and $|{\bf a}|<|{\bf b}|$. This proves the assertion \eqref{eq::67}.

    	Let $\wt{\bf a}=\wt{\bf b}=i$ and $|{\bf a}| =|{\bf b}|=j$. Our final goal is to show that $u^{\bf a}x^{\bf b} v=0$ for ${\bf a}\neq {\bf b}$ and $u^{\bf a}x^{\bf a} v=cv$, for some non-zero $c\in \C$. We proceed by induction on $j$. Assume that $j>0$ and the assertions hold for smaller values of $j$. Define $p$ to be such that $a_p\neq 0$ and $a_s=0$ for any $p<s\leq m$. Denote by ${\bf e}_p$ the $m$-tuple with $1$ at the $p$-th position and $0$ elsewhere. Then   $u^{\bf a} = u^{{\bf a}-{\bf e}_p}(u_p-\varphi(u_p))$. 
  Recall that $d_p =-\deg(u_{p})$.   Observe that $u^{{\bf a}-{\bf e}_p}M_{i-d_p,j} = 0$ (since $\wt(u^{{\bf a}-{\bf e}_p})=i-d_p$ with $|u^{{\bf a}-{\bf e}_p}| =j-1 <j$) and $u^{{\bf a}-{\bf e}_p}M_{i-d_p-1,0}=0$ (since $\wt(u^{{\bf a}-{\bf e}_p})=i-d_p > i-d_p-1$). Also, we recall that $\deg(x_s) =-1+d_s$, for any $1\leq s\leq m$. By Equations \eqref{eq::pnSk}-\eqref{eq2::pnSk} it follows that
  \begin{align*}
  	&u^{\bf a}x^{\bf b}v =  u^{{\bf a}-{\bf e}_p} \sum_{1\leq s\leq m,~d_s=d_p} c_s x^{{\bf b}-{\bf e}_s} [u_p,x_s]v.
  \end{align*} 
Since $d_s=d_p$ in the summation above implies that $[u_p,x_s]\in \mf m(-1)$, we have    \begin{align*}
	&u^{\bf a}x^{\bf b}v  =\sum_{1\leq s\leq m,~d_s=d_p}c_s u^{{\bf a}-{\bf e}_p}x^{{\bf b}-{\bf e}_s}\varphi([u_p,x_s])v.
\end{align*} By assumption, we have that $\varphi([u_p,x_s])\neq 0 \Leftrightarrow p=s$. This implies that $u^{\bf a}x^{\bf b}v=0$ provided that $b_p=0$. In the case that $b_p>0$, we have 
 \begin{align*}
	&u^{\bf a}x^{\bf b}v  =c_p  b_pu^{{\bf a}-{\bf e}_p}x^{{\bf b}-{\bf e}_p}\varphi([u_p,x_p])v=c_p  b_pu^{{\bf a}-{\bf e}_p}x^{{\bf b}-{\bf e}_p}v,
\end{align*} where $c_p =\pm 1$ depends on the parities   $x_1^{b_1}x_2^{b_2}\cdots x_{p-1}^{b_{p-1}}$ and $u_p$ as described in Equations \eqref{eq::pnSk}-\eqref{eq2::pnSk}.

   By induction hypothesis, $u^{\bf a}x^{\bf b} v=0$ unless ${\bf a}= {\bf b}$. If ${\bf a} ={\bf b}$, then by induction hypothesis again we get a non-zero vector $u^{{\bf a}-{\bf e}_p}x^{{\bf b}-{\bf e}_p}v \in \C v$. The conclusion in \eqref{eq::66} follows as well. This completes the proof of Lemma \ref{lem::3}. 
\end{proof}

 Retain the notations and assumptions above. The next  goal is to establish Lemmas \ref{lem::6} and \ref{lem::7} below. 

For each ${\bf a}\in X$, we let $I_{\bf a}$ be the ideal of $U(\mf m)$ spanned by $u^{\bf b}$ for $\bf b>\bf a$. 	Let $M\in $ $\mf a$-WMod$^\varphi$ and set $V:=\text{Wh}_\varphi(M)$. Then $\Hom_\C(U(\mf m),V)$ is a left ($\Z_2$-graded) $U(\mf m)$-module. Define  $E:=E_{V,\varphi}\subset \Hom_\C(U(\mf m),V)$ as the $\mf m$-submodule of all linear maps $f: U(\mf m)\rightarrow V$ such that $f(I_{\bf a})=0$, for some ${\bf a}\in X$. Let $\kappa': M\rightarrow V$ be an even linear map such that $\kappa'$ restricts to the identity map on $V$. Then the map 
\[\kappa: M\rightarrow E_{V,\varphi},~\kappa(a)(u) = (-1)^{p(a) p(u)} \kappa'(ua),~\text{for homogenous }u\in U(\mf m),~a\in M,\] defines a homomorphism of $\mf m$-modules. 
\begin{lem}  \label{lem::4}
 $\kappa$ is an isomorphism of $\mf m$-modules.
\end{lem}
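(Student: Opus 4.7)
\textbf{Plan for proof of Lemma \ref{lem::4}.} The proof proceeds in four steps: well-definedness of $\kappa$, $\mf m$-linearity, injectivity, and surjectivity. The preparatory Lemma \ref{lem::3} supplies the triangular matrix of coefficients needed to invert $\kappa$.

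\emph{Well-definedness.} For each $a\in M$, we need ${\bf a}\in X$ such that $\kappa(a)(u^{\bf b})=0$ whenever ${\bf b}>{\bf a}$, i.e.\ $u^{\bf b} a=0$ in $M$ for such ${\bf b}$. Since each $u_i-\varphi(u_i)$ acts locally nilpotently on $a$, there exist integers $N_1,\ldots,N_m$ such that $(u_m-\varphi(u_m))^{N_m}a=0$, and iteratively, after fixing $b_m<N_m$, one finds an $N_{m-1}$ annihilating $(u_m-\varphi(u_m))^{b_m}a$ under the appropriate power of $(u_{m-1}-\varphi(u_{m-1}))$, and so on. Taking maxima at each stage, we get a global bound: $u^{\bf b}a=0$ unless $b_i<N_i$ for every $i$, which cuts out a finite subset of $X$. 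Let ${\bf a}_0$ denote the maximum (in the fixed ordering) of this finite set; then $\kappa(a)(u^{\bf b})=\pm\kappa'(u^{\bf b}a)=0$ for all ${\bf b}>{\bf a}_0$, so $\kappa(a)\in E_{V,\varphi}$.

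\emph{$\mf m$-linearity.} This is a formal check from the definitions: the $\mf m$-action on $\Hom_\C(U(\mf m),V)$ is induced by right multiplication on $U(\mf m)$, so that $(y\cdot f)(u)$ equals $f(uy)$ up to the standard sign. Matching the sign conventions in the formula $\kappa(a)(u)=(-1)^{p(a)p(u)}\kappa'(ua)$ then gives $\kappa(ya)=y\cdot\kappa(a)$ for homogeneous $y\in\mf m$, $a\in M$.

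\emph{Injectivity.} Suppose $\kappa(a)=0$, i.e.\ $\kappa'(ua)=0$ for all $u\in U(\mf m)$, so $U(\mf m)a\subseteq\ker\kappa'$. Assume for contradiction that $a\neq 0$. By the argument in the well-definedness step, only finitely many $u^{\bf b}a$ are nonzero; hence $U(\mf m)a$ is finite-dimensional. On this finite-dimensional $\mf m$-module every element of $\mf m_\varphi=\{x-\varphi(x)\mid x\in\mf m\}$ acts nilpotently. Super Engel's theorem for finite-dimensional modules then provides a nonzero common eigenvector $v\in U(\mf m)a$ with $\mf m_\varphi\cdot v=0$, i.e.\ $v\in V=\text{Wh}_\varphi(M)$. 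But $v\in U(\mf m)a\subseteq\ker\kappa'$ forces $\kappa'(v)=0$, while $\kappa'|_V=\text{id}_V$ gives $v=\kappa'(v)=0$, a contradiction. Thus $a=0$.

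\emph{Surjectivity.} Given $f\in E_{V,\varphi}$, choose ${\bf a}_0\in X$ with $f(I_{{\bf a}_0})=0$. We seek $a\in M$ of the form $a=\sum_{{\bf b}\le{\bf a}_0}x^{\bf b}v_{\bf b}$ with $v_{\bf b}\in V$ such that $\kappa(a)=f$. For any ${\bf c}\le{\bf a}_0$, Lemma \ref{lem::3} gives
\begin{equation*}
u^{\bf c}a \;=\; c_{\bf c}v_{\bf c}+\sum_{{\bf c}<{\bf b}\le{\bf a}_0}u^{\bf c}x^{\bf b}v_{\bf b},
\end{equation*}
since terms with ${\bf c}>{\bf b}$ vanish by \eqref{eq::67} and the diagonal term is $c_{\bf c}v_{\bf c}$ by \eqref{eq::66}. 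We then define the $v_{\bf c}$'s by descending induction on the ordering: assuming $v_{\bf b}$ has been constructed for all ${\bf b}>{\bf c}$ (with ${\bf b}\le{\bf a}_0$), set
\begin{equation*}
v_{\bf c}\;:=\;c_{\bf c}^{-1}\Bigl((-1)^{p(a)p(u^{\bf c})}f(u^{\bf c})-\kappa'\Bigl(\sum_{{\bf c}<{\bf b}\le{\bf a}_0}u^{\bf c}x^{\bf b}v_{\bf b}\Bigr)\Bigr)\;\in\;V,
\end{equation*}
where the parity of $a$ is fixed to match that of $f$. This yields $\kappa'(u^{\bf c}a)=(-1)^{p(a)p(u^{\bf c})}f(u^{\bf c})$, hence $\kappa(a)(u^{\bf c})=f(u^{\bf c})$ for ${\bf c}\le{\bf a}_0$. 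For ${\bf c}>{\bf a}_0$ every summand of $u^{\bf c}a$ has ${\bf c}>{\bf b}$ and vanishes by \eqref{eq::67}, so $\kappa(a)(u^{\bf c})=0=f(u^{\bf c})$. Thus $\kappa(a)=f$.

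The main obstacle is the finite-dimensionality in the well-definedness and injectivity steps, which, while elementary, requires the iterated local nilpotency argument above; once this is in hand, the surjectivity follows cleanly from the triangular pattern $u^{\bf c}x^{\bf b}v=c_{\bf c}\delta_{{\bf b},{\bf c}}v$ (modulo strictly lower terms) supplied by Lemma \ref{lem::3}.
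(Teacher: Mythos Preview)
Your proof is correct and follows essentially the same route as the paper's. The paper's version is terser: for injectivity it simply asserts that a nonzero object in the Whittaker category has a nonzero Whittaker vector (your super Engel argument is exactly what underlies this), and for surjectivity it climbs the filtration $E^{\bf a}_{V,\varphi}\subseteq E^{{\bf a}^+}_{V,\varphi}$ one step at a time, subtracting $\kappa(d\,x^{{\bf a}^+}f(u^{{\bf a}^+}))$, which when unwound is precisely your descending recursion for the $v_{\bf c}$. Your additional checks of well-definedness and $\mf m$-linearity are points the paper leaves implicit.
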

\begin{proof} 
	First, we note that $\ker(\kappa)\in \mf a$-WMod$^\varphi$. If $\ker(\kappa)$ is non-zero, then $V\cap \ker(\kappa)\neq 0$, and so $\kappa(V\cap \ker(\kappa))\neq 0,$ a contradiction. This proves that $\kappa$ is injective. 
	
  It remains to show that $\kappa$ is surjective. 	Define $E^{\bf a}_{V,\varphi}:= \{f\in E_{V,\varphi}|~f(I_{\bf a})=0\}$, for any ${\bf a}\in X$. We certainly have $\kappa(M)\supseteq \kappa(V)= E_{V,\varphi}^{\bf 0}$.  We shall proceed by induction on the linear order $<$ on $X$. Suppose that $\kappa(M)\supseteq E_{V,\varphi}^{\bf a}$, for some ${\bf a}\in X$. Let ${\bf a}^+>{\bf a}$ be the successor of ${\bf a}$. Let $f\in E_{V,\varphi}^{\bf a^+}$, then there exists $d\in \C$ such that $f-\kappa(dX^{{\bf a}^+}f(u^{{\bf a}^+}))$ lies in $E_{V,\varphi}^{\bf a}$ by Lemma \ref{lem::3}. We may conclude that $\kappa(M)$ contains $E_{V,\varphi}^{{\bf a}^+}$. This completes the proof.
\end{proof}

\begin{rem}  By the proof of Lemma \ref{lem::4}, it follows that $E_{V,\varphi}^{\bf a}$ is spanned by $\{\kappa(X^{\bf b}v_{\bf b})|~{\bf b}\leq {\bf a},~v_{\bf b}\in \text{Wh}_\varphi(M)\}$, for any ${\bf a}\in X$.
\end{rem}

\begin{lem} \label{lem::6}
 Every element of $M$ can be uniquely expressed in the form $\sum_{\bf a}x^{\bf a}v_{\bf a}$, for some (finitely many) $v_{\bf a}\in \emph{Wh}_\varphi (M)$.
\end{lem}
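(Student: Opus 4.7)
The plan is to deduce both existence and uniqueness from the two preceding results: existence from the isomorphism $\kappa$ of Lemma \ref{lem::4} together with the spanning description of $E^{\bf a}_{V,\varphi}$ noted in the remark, and uniqueness from the ``triangularity'' of the action of $u^{\bf a}$ on $x^{\bf b}v$ provided by Lemma \ref{lem::3}.

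For existence, I would start with an arbitrary element $a\in M$ and transport it via $\kappa$. By definition of $E_{V,\varphi}=\bigcup_{{\bf a}\in X}E^{\bf a}_{V,\varphi}$, there exists ${\bf c}\in X$ with $\kappa(a)\in E^{\bf c}_{V,\varphi}$. The remark immediately following Lemma \ref{lem::4} states that $E^{\bf c}_{V,\varphi}$ is spanned by the elements $\kappa(x^{\bf b}v_{\bf b})$ with ${\bf b}\le {\bf c}$ and $v_{\bf b}\in V=\operatorname{Wh}_\varphi(M)$. Hence I can write $\kappa(a)=\sum_{{\bf b}\le{\bf c}}\kappa(x^{\bf b}v_{\bf b})$ for finitely many choices of $v_{\bf b}\in V$, and injectivity of $\kappa$ (Lemma \ref{lem::4}) then yields $a=\sum_{{\bf b}\le{\bf c}}x^{\bf b}v_{\bf b}$, which is the desired expansion.

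For uniqueness, suppose that $\sum_{\bf a}x^{\bf a}v_{\bf a}=0$ is a non-trivial finite relation with $v_{\bf a}\in V$; I will derive a contradiction. Let ${\bf a}_0$ be maximal with respect to the linear ordering $<$ on $X$ among those $\bf a$ with $v_{\bf a}\neq 0$. Applying $u^{{\bf a}_0}\in U(\mf m)$ to the relation, I would use Lemma \ref{lem::3}: for every ${\bf a}<{\bf a}_0$ we have $u^{{\bf a}_0}x^{\bf a}v_{\bf a}=0$ by \eqref{eq::67}, while for ${\bf a}={\bf a}_0$ we have $u^{{\bf a}_0}x^{{\bf a}_0}v_{{\bf a}_0}=c\,v_{{\bf a}_0}$ for some non-zero scalar $c$ by \eqref{eq::66}; by maximality there are no surviving terms with ${\bf a}>{\bf a}_0$. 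Thus $0=u^{{\bf a}_0}\cdot 0=cv_{{\bf a}_0}\neq 0$, a contradiction, which forces all $v_{\bf a}=0$ and gives uniqueness.

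No step should pose a real obstacle: the existence part is a direct consequence of Lemma \ref{lem::4} and the remark, and the uniqueness part is a one-line triangularity argument once Lemma \ref{lem::3} is invoked. The only subtle point to double-check is that the chosen maximal element ${\bf a}_0$ is well defined, which follows from the finiteness of the support of the relation, and that the scalar $c$ produced by \eqref{eq::66} is genuinely non-zero on the specific vector $v_{{\bf a}_0}$ considered, but this is precisely the content of Lemma \ref{lem::3}.
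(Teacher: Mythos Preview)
Your proposal is correct and follows essentially the same approach as the paper: existence via the spanning description of $E^{\bf a}_{V,\varphi}$ (the remark after Lemma~\ref{lem::4}) together with injectivity of $\kappa$, and uniqueness by applying $u^{{\bf a}_0}$ to a putative relation and invoking the triangularity of Lemma~\ref{lem::3}. The paper phrases the uniqueness step as evaluating $\kappa(\cdot)$ at $u^{\bf a}$ rather than acting by $u^{{\bf a}_0}$ directly, but since $\kappa'$ restricts to the identity on $V$ these are the same computation.
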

\begin{proof}
	Suppose that $x^{\bf a}v_{\bf a}+\sum_{\bf b<\bf a}x^{\bf b}v_{\bf b}=0$, for some ${\bf a}\in X$ and $v_{\bf a}, v_{\bf b}\in \text{Wh}_\varphi (M)$. By Lemma \ref{lem::3} we have  $$\kappa(\sum_{\bf b\leq \bf a}x^{\bf b}v_{\bf b})(  u_{\bf a}) = \pm \sum_{\bf b\leq \bf a} {u}_{\bf a} x^{\bf b}v_{\bf b} = cv_{\bf a}\neq 0,$$ for some $c\in \C$. By Lemma \ref{lem::4}, it follows that $\{\kappa(x^{\bf a}v) |~{\bf a}\in X,~v\in \text{Wh}_\varphi(M)\}$ spans $E_{V,\varphi}$. This completes the proof.
\end{proof}

Recall that $Q_\varphi$ denotes the generalized Galfand-Graev module $U(\mf a)/I_\varphi$.  Let $1_\varphi =1_{U(\mf a)}+I_{\varphi}\in Q_\varphi$. Then we have the following:
\begin{lem} \label{lem::7}
 $Q_\varphi$ is a free (right) $\mc W_\varphi$-module with basis $\{x^{\bf a}1_\varphi\in X|~{\bf a}\in X\}$.
\end{lem}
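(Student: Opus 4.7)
The plan is to reduce Lemma \ref{lem::7} to Lemma \ref{lem::6} applied to the Whittaker module $M = Q_\varphi$ itself. The first observation is that $Q_\varphi$ is an object of $\mathfrak a\text{-}\mathrm{WMod}^\varphi$: the element $1_\varphi$ is annihilated by $x-\varphi(x)$ for all $x\in\mf m$, and since the adjoint action of any $x\in\mf m$ on $U(\mf a)$ is locally nilpotent (because $\mf m$ is negatively graded), $x-\varphi(x)$ acts locally nilpotently on $U(\mf a)\cdot 1_\varphi=Q_\varphi$.

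Next I would identify $\mathcal W_\varphi$ with $\mathrm{Wh}_\varphi(Q_\varphi)$. Unwinding definitions, $\overline y=y+I_\varphi\in Q_\varphi$ lies in $\mathrm{Wh}_\varphi(Q_\varphi)$ precisely when $(x-\varphi(x))y\in I_\varphi$ for all $x\in\mf m$, which is exactly the defining condition of $\mathcal W_\varphi$. Moreover, under the standard identification $\mathcal W_\varphi\cong\End_{U(\mf a)}(Q_\varphi)^{\mathrm{op}}$, the right action of $\overline w\in\mathcal W_\varphi$ on $\overline y\in Q_\varphi$ is $\overline y\cdot\overline w=\overline{yw}$; in particular $(x^{\bf a} 1_\varphi)\cdot\overline w=\overline{x^{\bf a}w}$.

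Applying Lemma \ref{lem::6} with $M=Q_\varphi$, every element of $Q_\varphi$ admits a unique expression
\[
\sum_{\bf a\in X} x^{\bf a}v_{\bf a},\qquad v_{\bf a}\in\mathrm{Wh}_\varphi(Q_\varphi)=\mathcal W_\varphi,
\]
with only finitely many $v_{\bf a}$ nonzero. Writing $v_{\bf a}=\overline{w_{\bf a}}$, this says
\[
\sum_{\bf a\in X}x^{\bf a}v_{\bf a}=\sum_{\bf a\in X}\overline{x^{\bf a}w_{\bf a}}=\sum_{\bf a\in X}(x^{\bf a}1_\varphi)\cdot\overline{w_{\bf a}},
\]
so every element of $Q_\varphi$ is a unique finite right $\mathcal W_\varphi$-linear combination of the vectors $\{x^{\bf a}1_\varphi\mid{\bf a}\in X\}$. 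That is exactly the assertion that $Q_\varphi$ is a free right $\mathcal W_\varphi$-module with the indicated basis.

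The only substantive content is Lemma \ref{lem::6} itself, so there is no real obstacle here beyond verifying the bookkeeping for the right action of $\mathcal W_\varphi$; the only point worth checking carefully is that the map $\mathcal W_\varphi\to\mathrm{Wh}_\varphi(Q_\varphi)$, $\overline w\mapsto\overline w$, is indeed a bijection of right $\mathcal W_\varphi$-modules and that uniqueness in Lemma \ref{lem::6} translates to right $\mathcal W_\varphi$-linear independence of $\{x^{\bf a}1_\varphi\}$ rather than merely $\mathbb C$-linear independence.
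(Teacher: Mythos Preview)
Your proof is correct, and it takes a different (and cleaner) route than the paper. The paper proves Lemma~\ref{lem::7} directly from Lemma~\ref{lem::3} by filtering $Q_\varphi$ via the subspaces $Q_\varphi^{\bf a}=\{v\in Q_\varphi\mid I_{\bf a}v=0\}$, building a right $\mc W_\varphi$-homomorphism $Q_\varphi^{\bf a}\to\mc W_\varphi$, $v\mapsto u^{\bf a}v$, identifying its kernel with $Q_\varphi^{\bf a'}$ (predecessor), and then inducting along the linear order on $X$ to split the filtration. You instead observe that $Q_\varphi$ is itself an object of $\mf a\text{-WMod}^\varphi$ with $\mathrm{Wh}_\varphi(Q_\varphi)=\mc W_\varphi$, and simply apply Lemma~\ref{lem::6} to $M=Q_\varphi$; the translation between the left $U(\mf a)$-action and the right $\mc W_\varphi$-action is exactly the bookkeeping you spell out.

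What each buys: the paper's argument is self-contained in the sense that it only uses Lemma~\ref{lem::3}, so one could in principle prove Lemma~\ref{lem::7} before Lemmas~\ref{lem::4} and~\ref{lem::6}. Your argument shows that once Lemma~\ref{lem::6} is established there is nothing further to do for Lemma~\ref{lem::7}, which avoids rerunning essentially the same induction in a special case. Since in the paper Lemma~\ref{lem::6} precedes Lemma~\ref{lem::7}, your shortcut is entirely legitimate and arguably preferable.
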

\begin{proof}
The proof follows by an argument used in the proof of \cite{Skr}. Let ${\bf a}\in X$. Recall that $I_{\bf a}$ denotes the ideal of $U(\mf m)$ spanned by elements $u^{\bf b}$ for ${\bf b}\in X$ with ${\bf b}>{\bf a}.$ Define $Q_\varphi^{\bf a}:=\{v\in Q_\varphi|~I_{\bf a}v=0\}$. Since $\mc W_\varphi$ is isomorphic to $\End_{U(\mf a)}(Q_\varphi)^{\text{op}}$, it follows that  $Q_\varphi$ is a right $\mc W_\varphi$-module in a natural way. By Lemma \ref{lem::3}, we can define a homomorphism of right $W_\varphi$-modules $i: Q_\varphi^{\bf a} \rightarrow \mc W_\varphi$ by $i(v): = u^{\bf a}v$, for $v\in Q^{\bf a}_\varphi$. If we let ${\bf a}'$ be the predecessor of ${\bf a}$, then $\ker(i) = Q_\varphi^{\bf a'}$ by definition. Let $v\in \text{Wh}_\varphi(Q_\varphi)$. Then $v$ is a scalar multiple of $i(X^{\bf a}v)$ by Lemma \ref{lem::3}. This proves that $i$ is surjective, and then we obtain an isomorphism of  right $\mc W_\varphi$-modules $$Q_\varphi^{\bf a}/Q_\varphi^{\bf a'}\xrightarrow{\cong} \mc W_\varphi,~x^{\bf a}v_{\bf a}+\sum_{\bf b<\bf a}x^{\bf b}v_{\bf b}\mapsto c({\bf a})1_\varphi,$$ where $c({\bf a})\in \C$ is determined by $c({\bf a})v_{\bf a}= u^{\bf a}x^{\bf a}v_{\bf a}$. By induction on the linear order $<$ on $X$, it follows that  $Q_\varphi^{\bf a}\cong Q_\varphi^{\bf a'}\oplus \mc W_\varphi$ is a free right $\mc W_\varphi$-module with basis  $\{x^{\bf b}1_\varphi\in X|~{\bf b} <{\bf a}\}$.   \end{proof}

We are now ready to prove Theorem \ref{thm1}. 
\begin{proof}[Proof of Theorem \ref{thm1}]
Let $M\in$ $\mf a$-WMod$^\varphi$ and $V:= \text{Wh}_\varphi(M)$. Our first goal is to show that  the following map $\mu$ is an isomorphism: 
\begin{align}
&\mu: Q_\varphi\otimes_{\mc W_\varphi}V\rightarrow M,~\text{via} ~ \pr(u)\otimes v \mapsto uv, \text{~for $u\in U(\mf a)$, $v\in V$.}
 \end{align} 	By Lemma \ref{lem::7}, it follows that every element in $Q_\varphi\otimes_{\mc W_\varphi}V$ can be uniquely written as a finite sum $\sum_{\bf a}x^{\bf a}1_\varphi \otimes v_{\bf a}$, for ${\bf a}\in X$ and $v_{\bf a}\in \text{Wh}_\varphi(M)$. The bijectivity of $\mu$ now follows by Lemma \ref{lem::6}.

Let $V$ be a $\mc W_\varphi$-module. 
Our second goal is to show that the following homomorphism \[\nu: V \rightarrow   \text{Wh}_\varphi(Q_\varphi\otimes_{\mc W_\varphi}V),~\text{via } v\mapsto \pr(1_{U(\mf a)})\otimes v, \text{ for $v\in V$,}\] is an isomorphism. Since $Q_\varphi$ is a free $\mc W_\varphi$-module by Lemma \ref{lem::7}, $\nu$ is a monomorphism and so it remains to show that $\nu$ is surjective. To see this,  suppose on the contrary that there is ${\bf 0}\neq {\bf a}\in X$ such that  $$v = x^{\bf a}\otimes v_{\bf a}+\sum_{\bf b<\bf a}x^{\bf b}\otimes v_{\bf b}\in \text{Wh}_\varphi(Q_\varphi\otimes_{\mc W_\varphi}V),$$ then by Lemma \ref{lem::3} we get $ 0=   u^{\bf{a}} x^{\bf a}\otimes v_{\bf a}= c (1\otimes v_{\bf a})$ with $c\neq 0$, a contradiction. Therefore, we have $\nu(V) =  \text{Wh}_\varphi(Q_\varphi\otimes_{\mc W_\varphi}V)$.  This completes the proof. 
\end{proof}



\end{document}